\newtheorem*{rep@theorem}{\rep@title}
\newcommand{\newreptheorem}[2]{%
\newenvironment{rep#1}[1]{%
 \def\rep@title{#2 \ref{##1}}%
 \begin{rep@theorem}}%
 {\end{rep@theorem}}}
\newtheorem*{rep@cor}{\rep@title}
\newcommand{\newrepcor}[2]{%
\newenvironment{rep#1}[1]{%
 \def\rep@title{#2 \ref{##1}}%
 \begin{rep@cor}}%
 {\end{rep@cor}}}
\newtheorem*{rep@prop}{\rep@title}
\newcommand{\newrepprop}[2]{%
\newenvironment{rep#1}[1]{%
 \def\rep@title{#2 \ref{##1}}%
 \begin{rep@prop}}%
 {\end{rep@prop}}}
\newtheorem{cor}{Corollary}[section]
\newtheorem{corx}{Corollary}
\newtheorem{theorem}[cor]{Theorem}
\newtheorem{thmx}[corx]{Theorem}
\newtheorem{prop}[cor]{Proposition}
\newtheorem{propx}[corx]{Proposition}
\newtheorem{lemma}[cor]{Lemma}
\theoremstyle{definition}
\newtheorem{defi}[cor]{Definition}
\theoremstyle{remark}
\newtheorem{remark}[cor]{Remark}
\newtheorem*{remark*}{Remark}
\newtheorem{example}[cor]{Example}
\newcommand{\dwp}{d_{\mathrm{W}\!\mathrm{P}}}
\newcommand{\weil}{{\mathrm{W}\!\mathrm{P}}}
\newcommand{\Th}{{\mathrm{T}\mathrm{h}}}
\newcommand{\N}{{\mathbb N}}
\newcommand{\R}{{\mathbb R}}
\newcommand{\Z}{{\mathbb Z}}
\newcommand{\Hyp}{\mathbb{H}}
\newcommand{\AdS}{\mathbb{A}\mathrm{d}\mathbb{S}}
\newcommand{\SL}{\mathrm{SL}}
\newcommand{\PSL}{\mathrm{PSL}}
\newcommand{\injrad}{\mathrm{injrad}}
\newcommand{\arcsinh}{\mathrm{arcsinh}}
\newcommand{\thin}{\mathrm{thin}}
\newcommand{\thick}{\mathrm{thick}}
\newcommand{\Diffeo}{\mbox{Diff}}
\newcommand{\trace}{\mbox{\rm tr}}
\newcommand{\tr}{\mbox{\rm tr}}
\newcommand{\grad}{\operatorname{grad}}
\newcommand{\isom}{\mathrm{Isom}}
\newcommand{\Vol}{\mathrm{Vol}}
\newcommand{\diam}{\mathrm{diam}}
\newcommand{\Area}{\mathrm{Area}}
\newcommand{\dth}{d_{\mathrm{Th}}}
\newcommand{\Ker}{\mathrm{Ker}}
\newcommand{\Ima}{\mathrm{Im}}
\def\Teich{\mathcal{T}}
\begin{document}

\setcounter{secnumdepth}{3}
\setcounter{tocdepth}{2}

\title[Volume of Anti-de Sitter $3$-manifolds]{On the volume of Anti-de Sitter maximal globally hyperbolic three-manifolds}

\author[Francesco Bonsante]{Francesco Bonsante}
\address{Francesco Bonsante: Dipartimento di Matematica ``Felice Casorati", Universit\`{a} degli Studi di Pavia, Via Ferrata 5, 27100, Pavia, Italy.} \email{bonfra07@unipv.it} 
\author[Andrea Seppi]{Andrea Seppi}
\address{Andrea Seppi: Dipartimento di Matematica ``Felice Casorati", Universit\`{a} degli Studi di Pavia, Via Ferrata 5, 27100, Pavia, Italy.} \email{andrea.seppi01@ateneopv.it}
\author[Andrea Tamburelli]{Andrea Tamburelli}
\address{Andrea Tamburelli: Mathematics Research Unit, University of Luxembourg, Campus Kirchberg, 6 rue Coudenhove-Kalergi, L-1359 Luxembourg.} \email{andrea.tamburelli@uni.lu}


\thanks{The authors were partially supported by FIRB 2010 project ``Low dimensional geometry and topology'' (RBFR10GHHH003). The first author was partially supported by
PRIN 2012 project ``Moduli strutture algebriche e loro applicazioni''.
The first two authors are members of the national research group GNSAGA}

\begin{abstract}
We study the volume of maximal globally hyperbolic Anti-de Sitter manifolds containing a closed orientable Cauchy surface $S$, in relation to some geometric invariants depending only on the two points in Teichm\"uller space of $S$ provided by Mess' parameterization - namely on two isotopy classes of hyperbolic metrics $h$ and $h'$ on $S$. The main result of the paper is that the volume coarsely behaves like the minima of the $L^1$-energy of maps from $(S,h)$ to $(S,h')$. 

The study of $L^p$-type energies had been suggested by Thurston, in contrast with the well-studied Lipschitz distance. A corollary of our result shows that the volume of maximal globally hyperbolic Anti-de Sitter manifolds is bounded from above by the exponential of (any of the two) Thurston's Lipschitz asymmetric distances, up to some explicit constants. Although there is no such bound from below, we provide examples in which this behavior is actually realized. We prove instead that the volume is bounded from below by the exponential of the Weil-Petersson distance.

The proof of the main result uses more precise estimates on the behavior of the volume, which is proved to be coarsely equivalent to the length of the (left or right) measured geodesic lamination of earthquake from $(S,h)$ to $(S,h')$, and to the minima of the holomorphic 1-energy. 
\end{abstract}

\maketitle

\section{Introduction}

Since the work of Mess in 1990 (\cite{Mess}), Anti-de Sitter geometry in dimension 3 has been extensively studied, two main motivations being its strong relations with the Teichm\"uller theory of hyperbolic surfaces, and its similarities with hyperbolic geometry. These are also the main motivations behind this paper. See \cite{Schlenker-Krasnov,bon_schl,bonschlfixed,BonSchlGAFA2009,bbzads,BonSepKsurfacesAdS,notes}.

In fact, the present work is concerned with the study of maximal globally hyperbolic spatially compact Anti-de Sitter 3-manifolds. These manifolds are Lorentzian manifolds of constant sectional curvature $-1$, are diffeomorphic to $S\times\R$, where $S$ is here a closed orientable surface of genus $g\geq 2$, and represent the Lorentzian analogue of quasi-Fuchsian hyperbolic manifolds. The analogy is enhanced by the fact that the moduli space of maximal globally hyperbolic metrics on $S\times\R$ (up to isometries isotopic to the identity) is parameterized by $\Teich(S)\times\Teich(S)$, where $\Teich(S)$ denotes the Teichm\"uller space of $S$ --- which is the analogue in this context of Bers' Simultaneous Uniformization Theorem.

Quasi-Fuchsian manifolds have been widely studied, \cite{MR2144972,MR847953,huangwang,MR2407332,MR2386723,MR795233,MR2172479,MR556580,seppiminimal}. In the celebrated paper \cite{BrockWPconvexcore}, Brock proved that the volume of the convex core of a quasi-Fuchsian manifold $M$ behaves coarsely like the Weil-Petersson distance between the two components in $\Teich(S)\times\Teich(S)$ provided by Bers' parameterization (\cite{MR0111834}). The main purpose of this paper is to study how the volume of the convex core of maximal globally hyperbolic manifolds is related to some analytic or geometric quantities only depending on the two parameters in Teichm\"uller space.



\subsection*{$L^1$-energies and length of earthquake laminations}

By an abuse of notation, we will use $h$ to denote the class of a hyperbolic metric in $\Teich(S)$, and $M_{h,h'}$ will denote the maximal globally hyperbolic manifold corresponding to the point $(h,h')\in\Teich(S)\times\Teich(S)$ in Mess' parameterization.
The main result of this paper is the fact that the volume of a maximal globally hyperbolic Anti-de Sitter manifold  roughly behaves like the minima of certain types of $L^1$-energies of maps $f:(S,h)\to(S,h')$. In fact, in his groundbreaking preprint \cite{Thurstondistance} about the Lipschitz asymmetric distance, Thurston suggested the interest in studying other type of $L^p$-energies, in contrast to the case $p=\infty$ corresponding to the Lipschitz distance. In this paper, we will consider the functional which corresponds to $p=1$:
$$C^1_{\mathrm{id}}(S)\ni f\mapsto\int_S ||d f||d\mathrm A_h~,$$
where  $C^1_{\mathrm{id}}(S)$ denotes the space of $C^1$ self-maps of $S$ homotopic to the identity, and
 $||df||$ is the norm of the differential of $f$, computed with respect to the metrics $h$ and $h'$ on $S$. (See Definition \ref{defi energial1}.) This functional is usually called $L^1$-\emph{energy}, or \emph{total variation}, as it coincides with the total variation in the sense of BV maps. Our main result is the following:

\begin{thmx} \label{cor energiaL1}
Let $M_{h,h'}$ be a maximal globally hyperbolic  $\AdS^{3}$ manifold. {Then
$$\frac{1}{4}\inf_{f\in C^1_{\mathrm{id}}(S)} \int_S ||d f||d\mathrm A_h -\frac{\sqrt{2}}{2}\pi|\chi(S)|\leq \Vol(\mathcal{C}(M_{h,h'}))\leq \frac{\pi^{2}}{2}|\chi(S)|+\frac{\sqrt{2}}{2}\inf_{f\in C^1_{\mathrm{id}}(S)} \int_S ||d f||d\mathrm A_h~.$$}
\end{thmx}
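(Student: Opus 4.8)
The plan is to derive Theorem~\ref{cor energiaL1} by combining two quantitative inputs. The first, geometric, is a sharp two-sided comparison of $\Vol(\cC(M_{h,h'}))$ with the $h$-length $\ell_h(\lambda)$ of the left (equivalently right) measured geodesic lamination $\lambda$ realizing the earthquake from $h$ to $h'$, of the form $\tfrac14\,\ell_h(\lambda)\le\Vol(\cC(M_{h,h'}))\le\tfrac{\sqrt2}{2}\,\ell_h(\lambda)+\tfrac{\pi^2}{2}\,|\chi(S)|$; this is established separately in the paper, by analysing the convex core through the pleated-boundary data of Mess (induced metrics and bending laminations) and the maximal surface, and one may equally well route through the coarse equivalence of $\ell_h(\lambda)$ with the minima of the holomorphic $1$-energy of maps $(S,h)\to(S,h')$. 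The second input, analytic, is a two-sided comparison of $\inf_{f\in C^1_{\mathrm{id}}(S)}\int_S\|df\|\,d\mathrm A_h$ with $\ell_h(\lambda)$, up to the additive term $\sqrt2\,\mathrm{Area}(S,h)=2\sqrt2\,\pi|\chi(S)|$ --- the $\sqrt2$ being forced by the fact that $\|\cdot\|$ is the Hilbert--Schmidt norm of the differential, so that a surface isometry has energy density $\sqrt2$. Granting both comparisons, the statement is pure arithmetic: the four constants telescope.

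For the lower bound on the volume I need an \emph{upper} bound for the energy infimum, which I would obtain from an explicit competitor. The left earthquake map $E^\lambda_L\colon(S,h)\to(S,h')$ is a leafwise isometry on the strata of $S\setminus\lambda$, hence has energy density identically $\sqrt2$ away from $\lambda$, but is discontinuous across $\lambda$; replacing it by $C^1$ maps homotopic to the identity amounts to interpolating the shear across thin collars of the leaves. A leaf of transverse weight $w$ and $h$-length $\ell$ contributes, across a collar of width $\varepsilon$, at least $\ell\sqrt{2\varepsilon^2+w^2}$ (the linear interpolation being optimal), which tends to $\ell w$ as $\varepsilon\to0$, while the complement of the collars contributes $\to\sqrt2\,\mathrm{Area}(S,h)$; approximating $\lambda$ by weighted multicurves one gets $\inf_f\int_S\|df\|\,d\mathrm A_h\le\sqrt2\,\mathrm{Area}(S,h)+\ell_h(\lambda)=2\sqrt2\,\pi|\chi(S)|+\ell_h(\lambda)$. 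Together with $\Vol(\cC(M_{h,h'}))\ge\tfrac14\,\ell_h(\lambda)$ and $\mathrm{Area}(S,h)=2\pi|\chi(S)|$ the $|\chi(S)|$-terms cancel: $\tfrac14\inf_f\int_S\|df\|\,d\mathrm A_h-\tfrac{\sqrt2}{2}\pi|\chi(S)|\le\tfrac14\bigl(2\sqrt2\,\pi|\chi(S)|+\ell_h(\lambda)\bigr)-\tfrac{\sqrt2}{2}\pi|\chi(S)|=\tfrac14\,\ell_h(\lambda)\le\Vol(\cC(M_{h,h'}))$.

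For the upper bound on the volume I need a \emph{lower} bound $\int_S\|df\|\,d\mathrm A_h\ge\ell_h(\lambda)$ valid for \emph{every} $f\in C^1_{\mathrm{id}}(S)$. The mechanism is a Crofton/coarea estimate: along any curve $\gamma$ one has $\int_\gamma\|df\|\,d\ell_h\ge\ell_{h'}(f(\gamma))\ge\ell_{h'}([\gamma])$ since $f$ is homotopic to the identity; integrating this against a measure on geodesics that fills $S$ --- for instance the Liouville geodesic current of $h$, whose pairing with a measured lamination $\mu$ equals $\ell_h(\mu)$ --- bounds $\int_S\|df\|\,d\mathrm A_h$ from below, and invoking that the left earthquake acts as a leafwise isometry on $\lambda$, so that $\ell_{h'}(\lambda)=\ell_h(\lambda)$, yields the desired inequality. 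Plugging it into the precise volume bound gives $\Vol(\cC(M_{h,h'}))\le\tfrac{\sqrt2}{2}\,\ell_h(\lambda)+\tfrac{\pi^2}{2}|\chi(S)|\le\tfrac{\sqrt2}{2}\inf_f\int_S\|df\|\,d\mathrm A_h+\tfrac{\pi^2}{2}|\chi(S)|$, which is the asserted upper bound.

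The real work is in the two inputs, not in their combination. The sharp geometric comparison --- with exactly the constants $\tfrac14$, $\tfrac{\sqrt2}{2}$, $\tfrac{\pi^2}{2}$ --- requires a genuine analysis of the $\AdS$ convex core (its pleated boundary data, the relation of $\lambda$ to the bending laminations, the universal bound on the core's width) and is a theorem of independent interest. The analytic lower bound $\int_S\|df\|\,d\mathrm A_h\ge\ell_h(\lambda)$ is the subtlest point, because $(S,h')$ obtained from $(S,h)$ by a very large earthquake is \emph{not} large in any naive metric sense --- its area remains $2\pi|\chi(S)|$ --- so the coarea/current argument must be arranged so that the stretching forced on $f$ transverse to $\lambda$ is integrated over a set of definite measure and with the sharp constant $1$; one plausible way to make this airtight is to compare a general $f$ with the minimal Lagrangian map $(S,h)\to(S,h')$ and thereby reduce to the holomorphic $1$-energy. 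Finally, all normalizations must be aligned so that the constants produced by the two inputs assemble into precisely $\tfrac14$, $\tfrac{\sqrt2}{2}\pi$, $\tfrac{\pi^2}{2}$, and $\tfrac{\sqrt2}{2}$ of the statement.
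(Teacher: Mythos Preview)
Your lower bound is exactly the paper's argument: smooth the earthquake map across $\varepsilon$-collars to produce competitors $f_\varepsilon$ with $\int_S\|df_\varepsilon\|\,d\mathrm A_h\to\ell_\lambda(h)+2\sqrt2\,\pi|\chi(S)|$, then invoke $\Vol(\mathcal C)\ge\tfrac14\ell_\lambda(h)$ (Theorem~\ref{prop:comparison volume lams}). That part is fine.

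The upper bound, however, has a real gap. Your route needs the inequality $\int_S\|df\|\,d\mathrm A_h\ge\ell_h(\lambda)$ for \emph{every} $f\in C^1_{\mathrm{id}}(S)$, and the Crofton/Liouville sketch you give does not prove it. Integrating $\int_\gamma\|df\|\,d\ell_h\ge\ell_{h'}([\gamma])$ along the leaves of $\lambda$ yields a bound on $\int_\lambda\|df\|$ (a lamination integral), not on the \emph{area} integral $\int_S\|df\|\,d\mathrm A_h$; integrating instead against the Liouville current $L_h$ produces a quantity comparable to $\iota(L_h,L_{h'})$, which has nothing to do with the particular earthquake lamination $\lambda$. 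You acknowledge this is ``the subtlest point'' and suggest routing through the minimal Lagrangian map --- but that is exactly the paper's argument, which you have not carried out. (Also, the geometric upper bound you quote as $\Vol\le\tfrac{\sqrt2}{2}\ell_h(\lambda)+\tfrac{\pi^2}{2}|\chi(S)|$ is not what the paper establishes: Theorem~\ref{prop:comparison volume lams} gives the sharper constant $\tfrac14$.)

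The paper's upper-bound argument is both different and simpler. It does not compare $\inf E_d$ with $\ell_h(\lambda)$ at all. Instead it uses the \emph{pointwise} inequality between the $1$-Schatten and Hilbert--Schmidt norms of $df$: if $\sigma_1,\sigma_2$ are the singular values of $df_x$, then $\trace(b_f)=\sigma_1+\sigma_2\le\sqrt2\,\sqrt{\sigma_1^2+\sigma_2^2}=\sqrt2\,\|df_x\|$. Hence $E_{Sch}(f)\le\sqrt2\,E_d(f)$ for every $f$, so $\inf E_{Sch}\le\sqrt2\,\inf E_d$. By the Trapani--Valli theorem (Proposition~\ref{prop:minlag sch}) the Schatten infimum is attained at the minimal Lagrangian map $m$, and Theorem~\ref{theorem hol energy} (the maximal-surface computation) gives $\Vol(\mathcal C)\le\tfrac14 E_{Sch}(m)+\tfrac{\pi^2}{2}|\chi(S)|$. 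Chaining these yields $\Vol(\mathcal C)\le\tfrac{\sqrt2}{4}\inf E_d+\tfrac{\pi^2}{2}|\chi(S)|$, which in fact is stronger than the stated inequality. No coarea or current argument is needed.
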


Observe that the volume of the convex core vanishes precisely for Fuchsian manifolds, that is, on those manifolds containing a totally geodesic spacelike surface. Those manifolds correspond to the diagonal in $\Teich(S)\times\Teich(S)$, that is, $h=h'$. In this case, a direct computation shows that the left-hand-side in the inequality of Theorem \ref{cor energiaL1} vanishes. Indeed, the $L^1$-energy is minimized by the identity map $f=\mathrm{id}:(S,h)\to(S,h)$, and for this map $||df||=\sqrt 2$ at every point.

Theorem \ref{cor energiaL1} will follow from two more precise statements about the behavior of the volume of the convex core, namely Theorem \ref{theorem hol energy} and Theorem \ref{prop:comparison volume lams} below. The former uses the relation of Anti-de Sitter geometry, and in particular maximal surfaces (i.e. with vanishing mean curvature), with minimal Lagrangian maps between hyperbolic surfaces. The latter relies instead on the connection between pleated surfaces and earthquake maps. \\

Let us consider first the 1-\emph{Schatten energy}. Given two hyperbolic surfaces $(S,h)$ and $(S,h')$, this functional, which we denote $E_{Sch}(\cdot,h,h')$, is defined as:
$$
C^1_{\mathrm{id}}(S)\ni f\mapsto \int_S \tr\left(\sqrt{df^*df}\right)d\mathrm A_h~,
$$
where $df^*$ is the $h$-adjoint operator of the differential $df$, and $\sqrt{df^*df}$ denotes the unique positive, symmetric square root of the operator ${df^*df}$. 
When $f$ is orientation-preserving, the functional $E_{Sch}(f,h,h')$ actually coincides with the \emph{holomorphic} $L^{1}$-\emph{energy}, which was already studied in \cite{oneharmonic}, and is defined (on the space $\Diffeo_{\mathrm{id}}(S)$ of diffeomorphisms isotopic to the identity) by: 
$$\Diffeo_{\mathrm{id}}(S)\ni f\mapsto\int_S ||\partial f||d\mathrm A_h~,$$
where $||\partial f||$ is the norm of the $(1,0)$-part of the differential of $f$. In \cite{oneharmonic}, Trapani and Valli proved that this functional admits a unique minimum, which coincides with the unique minimal Lagrangian diffeomorphism $m:(S,h)\to(S,h')$
isotopic to the identity (see also \cite{labourieCP} and \cite{Schoenharmonic}). Using the known construction (\cite{bon_schl}, \cite{Schlenker-Krasnov}) which associates a minimal Lagrangian diffeomorphism from $(S,h)$ to $(S,h')$, isotopic to the identity, to the unique maximal surface in $M_{h,h'}$, we obtain the following theorem which gives a precise description of the coarse behavior of the volume of the convex core in terms of the  1-Schatten energy. 
\begin{thmx}\label{theorem hol energy}
Let $M_{h,h'}$ be a maximal globally hyperbolic $\AdS^3$ manifold. Then
$$\frac{1}{4}E_{Sch}(m,h,h')-\pi|\chi(S)| \leq \Vol(\mathcal{C}(M_{h,h'}))\leq \frac{\pi^{2}}{2}|\chi(S)|+\frac{1}{4}E_{Sch}(m,h,h')~,$$
where $m:(S,h)\to (S,h')$ is the minimal Lagrangian map isotopic to the identity, that is, the minimum of the 1-Schatten energy functional $E_{Sch}(\cdot,h,h'):C^1_{\mathrm{id}}(S)\to\R$. 
\end{thmx}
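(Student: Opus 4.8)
\emph{Reduction to the area of the maximal surface.} The plan is to rewrite both sides in terms of the unique maximal surface $\Sigma\subset M_{h,h'}$, with its first fundamental form $\mathrm{I}$ and its (traceless) shape operator $B$. Recall (Krasnov--Schlenker, Bonsante--Schlenker) that the minimal Lagrangian map $m$ is recovered from $\Sigma$, and that at a point where $\Sigma$ has principal curvatures $\pm\lambda$ --- with $\lambda\in[0,1)$, since $\Sigma$ is spacelike --- the differential $dm$ has singular values $\tfrac{1-\lambda}{1+\lambda}$ and $\tfrac{1+\lambda}{1-\lambda}$, while the area forms compare by $d\mathrm{A}_h=(1-\lambda^2)\,d\mathrm{A}_{\mathrm{I}}$. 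Hence
$$E_{Sch}(m,h,h')=\int_\Sigma\Bigl(\tfrac{1-\lambda}{1+\lambda}+\tfrac{1+\lambda}{1-\lambda}\Bigr)(1-\lambda^2)\,d\mathrm{A}_{\mathrm{I}}=\int_\Sigma 2(1+\lambda^2)\,d\mathrm{A}_{\mathrm{I}}.$$
Since $\Sigma$ is maximal the Gauss equation reads $K_{\mathrm{I}}=-1+\lambda^2$, so Gauss--Bonnet gives $\int_\Sigma\lambda^2\,d\mathrm{A}_{\mathrm{I}}=\mathrm{Area}(\Sigma)-2\pi|\chi(S)|$, and thus $\tfrac14 E_{Sch}(m,h,h')=\mathrm{Area}(\Sigma)-\pi|\chi(S)|$. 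After this reduction, the theorem is equivalent to
$$\mathrm{Area}(\Sigma)-2\pi|\chi(S)|\ \le\ \Vol(\mathcal{C}(M_{h,h'}))\ \le\ \mathrm{Area}(\Sigma)+\Bigl(\tfrac{\pi^2}{2}-\pi\Bigr)|\chi(S)|.$$

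\emph{The equidistant neighbourhood of $\Sigma$.} For both inequalities I would use the normal evolution of $\Sigma$: if $n$ is the future unit normal, the map $(x,t)\mapsto\exp_x(t\,n(x))$ has reparametrisation operator $\varphi_t=\cos t\,\mathrm{id}+\sin t\,B$ (so the pull-back metric on the level $\Sigma_t$ is $\mathrm{I}(\varphi_t\cdot,\varphi_t\cdot)$), it is an immersion exactly where $\varphi_t$ is invertible, i.e.\ for $|t|<\arctan(1/\lambda(x))$ and in particular for $|t|<\pi/4$ everywhere, and the induced area form on $\Sigma_t$ is $(\cos^2 t-\lambda^2\sin^2 t)\,d\mathrm{A}_{\mathrm{I}}$. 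Integrating, the open neighbourhood $U_a:=\{\exp_x(t\,n(x)):x\in\Sigma,\ |t|<a\}$ has volume $\int_\Sigma\bigl[(a+\tfrac12\sin 2a)-\lambda^2(a-\tfrac12\sin 2a)\bigr]\,d\mathrm{A}_{\mathrm{I}}$, and using once more $\int_\Sigma\lambda^2\,d\mathrm{A}_{\mathrm{I}}=\mathrm{Area}(\Sigma)-2\pi|\chi(S)|$ one checks that for $a=\pi/4$ this equals \emph{exactly} $\mathrm{Area}(\Sigma)+\bigl(\tfrac{\pi^2}{2}-\pi\bigr)|\chi(S)|$. So the upper bound is equivalent to the inclusion $\mathcal{C}(M_{h,h'})\subseteq U_{\pi/4}$.

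\emph{Upper bound.} To prove $\mathcal{C}(M_{h,h'})\subseteq U_{\pi/4}$ I would show that the two bounding levels $\Sigma_{\pm\pi/4}$ lie outside the convex core. From $\varphi_{\pm\pi/4}=\tfrac{1}{\sqrt2}(\mathrm{id}\pm B)$ one computes that $\det B_{\pm\pi/4}\equiv 1$, so $\Sigma_{\pm\pi/4}$ have constant Gauss curvature $-2$ and definite shape operator, hence are convex spacelike surfaces; by the known description of the complement of the convex core of a maximal globally hyperbolic $\AdS^3$ manifold as foliated by constant Gauss curvature surfaces, together with the uniqueness of such surfaces, $\Sigma_{\pi/4}$ and $\Sigma_{-\pi/4}$ are the leaves of curvature $-2$ and therefore lie, respectively, strictly in the future and in the past of $\mathcal{C}(M_{h,h'})$. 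Being connected, containing $\Sigma\subset U_{\pi/4}$, and disjoint from the achronal surfaces $\Sigma_{\pm\pi/4}$ bounding $\overline{U_{\pi/4}}$, the convex core is contained in $U_{\pi/4}$, which is the upper bound.

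\emph{Lower bound --- the main obstacle.} With $\mathcal{C}(M_{h,h'})\subseteq U_{\pi/4}$ the equidistant foliation is nonsingular throughout the convex core, and since the latter is geodesically convex each timelike normal geodesic $t\mapsto\exp_x(t\,n(x))$ meets it in an interval $[-w_-(x),w_+(x)]\ni 0$, so
$$\Vol(\mathcal{C}(M_{h,h'}))=\int_\Sigma\int_{-w_-(x)}^{w_+(x)}(\cos^2 t-\lambda^2\sin^2 t)\,dt\,d\mathrm{A}_{\mathrm{I}}(x),$$
and the lower bound reduces to showing this is at least $\int_\Sigma\lambda^2\,d\mathrm{A}_{\mathrm{I}}$. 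The hard point is that this inequality is \emph{false fibrewise}, precisely at points where $\lambda(x)$ is close to $1$ while the widths $w_\pm(x)$ stay away from $\pi/4$, so one cannot integrate a pointwise estimate. What seems to be needed is a quantitative lower bound on the protrusions $w_\pm(x)$ of the convex core past $\Sigma$ in terms of $\lambda(x)$ --- morally, where $\Sigma$ is strongly saddle-shaped the convex hull of $\Sigma$, and a fortiori the convex core, is forced to be thick, and this should be made effective by estimating how far chords of $\Sigma$ along its principal curves fall below and rise above $\Sigma$ --- combined with Gauss--Bonnet to absorb the regions where such a fibrewise estimate is slack. As a sanity check, in the Fuchsian case $h=h'$ one has $B=0$, $\Sigma$ totally geodesic, $\Vol(\mathcal{C})=0$ and $\mathrm{Area}(\Sigma)=2\pi|\chi(S)|$, so the lower bound holds with equality.
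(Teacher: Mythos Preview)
Your reduction and your upper bound are correct and coincide with the paper's argument: the $\pi/4$-equidistant neighbourhood $U_{\pi/4}$ of the maximal surface has boundary the two constant curvature $-2$ surfaces, contains the convex core, and has volume exactly $\mathrm{Area}(\Sigma)+(\tfrac{\pi^2}{2}-\pi)|\chi(S)|=\tfrac14 E_{Sch}(m,h,h')+\tfrac{\pi^2}{2}|\chi(S)|$.

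The lower bound, however, is genuinely incomplete, and the route you sketch --- controlling the widths $w_\pm(x)$ in terms of $\lambda(x)$ --- is not the one the paper takes and looks hard to carry out. The paper's trick is much simpler: rather than bounding $\Vol(\mathcal C)$ from below directly, write
\[
\Vol(\mathcal C(M_{h,h'}))=\Vol(U_{\pi/4})-\Vol\bigl(U_{\pi/4}\setminus\mathcal C(M_{h,h'})\bigr)
\]
and bound the \emph{second} term from above. The region $U_{\pi/4}\setminus\mathcal C$ lies between $\partial_\pm\mathcal C$ and the $K=-2$ surfaces $\Sigma_{\pm\pi/4}$, and is swept by the constant curvature foliation $S_\kappa$ for $\kappa\in[0,1]$; running the same computation as in Proposition~\ref{prop maximal volume} but integrating only over $\kappa\in[0,1]$ gives
\[
\Vol\bigl(U_{\pi/4}\setminus\mathcal C(M_{h,h'})\bigr)\le 2\pi|\chi(S)|\int_0^1\frac{d\kappa}{\sqrt\kappa(1+\kappa)^2}=\Bigl(\pi+\tfrac{\pi^2}{2}\Bigr)|\chi(S)|.
\]
Subtracting from your exact formula for $\Vol(U_{\pi/4})$ yields $\Vol(\mathcal C)\ge \mathrm{Area}(\Sigma)-2\pi|\chi(S)|$, which is precisely the lower bound you reduced to. No fibrewise control of $w_\pm$ is needed.
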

Again, the left-hand-side vanishes precisely on Fuchsian manifolds, that is, precisely when $\Vol(\mathcal{C}(M_{h,h'}))$ vanishes as well. We remark that in the proof of Theorem \ref{theorem hol energy} we need the fact that the minimal Lagrangian map actually minimizes $E_{Sch}(\cdot,h,h')$ on $C^1_{\mathrm{id}}(S)$, which follows from the theorem of Trapani and Valli, and the convexity of the functional $E_{Sch}(\cdot,h,h')$.
The upper bound in Theorem \ref{cor energiaL1} then follows from Theorem \ref{theorem hol energy}, by using that, for any $f\in\Diffeo_{\mathrm{id}}(S)$ and every $x\in S$, $\tr(\sqrt{df^*df})\leq \sqrt 2||df_x||$.
\\

A more combinatorial version of the relation between maximal surfaces and minimal Lagrangian maps is the association, already discovered by Mess, of (left and right) earthquake maps from $(S,h)$ to $(S,h')$ from the two pleated surfaces which form the boundary of the convex core of $M_{h,h'}$. We recall that Thurston's Earthquake Theorem (\cite{thurstonearth}) asserts the existence of a unique left (and a unique right) earthquake map from $(S,h)$ to $(S,h')$, thus producing two \emph{measured geodesic laminations}. The \emph{length} of a measured geodesic lamination is then the unique continuous homogeneous function which extends the length of simple closed geodesics.


If we denote by $E^\lambda:\Teich(S)\to\Teich(S)$ the transformation which associates to $h\in\Teich(S)$ the metric $h'=E^\lambda(h)$ obtained by a (left or right) earthquake along $\lambda$, the following result gives a relation between the volume and the length of (any of the two) earthquake laminations:

\begin{thmx} \label{prop:comparison volume lams}
Given a maximal globally hyperbolic manifold $M_{h,h'}$, let $\lambda$ be the (left or right) earthquake lamination such that $E^\lambda(h)=h'$. Then
$$
\frac{1}{4}\ell_{\lambda}(h)\leq 
\Vol(\mathcal C(M_{h,h'}))\leq \frac{1}{4}\ell_{\lambda}(h)+\frac{\pi^2}{2}|\chi(S)|~.
$$
\end{thmx}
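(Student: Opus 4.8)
\textbf{Proof plan for Theorem \ref{prop:comparison volume lams}.}

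The plan is to realize the measured geodesic lamination $\lambda$ geometrically as the bending lamination of one of the two boundary components of the convex core $\mathcal C(M_{h,h'})$, and then to estimate the volume by comparing the convex core with the region swept out by a family of equidistant surfaces from that pleated boundary surface. Recall from Mess' work that the two connected components $\partial^\pm\mathcal C(M_{h,h'})$ of the boundary of the convex core are pleated surfaces; the upper one carries an intrinsic hyperbolic metric isometric to $(S,h)$ (say) together with a measured bending lamination, and the associated left (resp. right) earthquake along that lamination produces the metric $h'$ on the lower boundary. Thus $\lambda$ is precisely this bending lamination, $\ell_\lambda(h)$ is its length with respect to the intrinsic metric of the pleated surface, and --- crucially for the lower bound --- the lamination has small total mass in a sense to be exploited (the total bending of a pleated surface in an $\AdS^3$ convex core is bounded, which is what prevents a lower bound in terms of Lipschitz distance but still allows one in terms of the length).

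For the \textbf{lower bound} $\frac14\ell_\lambda(h)\le\Vol(\mathcal C)$, I would first treat the case where $\lambda$ is a weighted simple closed geodesic, say with weight $w$ along a geodesic $\gamma$, and then extend to general $\lambda$ by continuity/density since both sides are continuous and homogeneous in $\lambda$. When $\lambda=w\gamma$, the convex core is obtained by bending along a single geodesic, and one can write down an explicit model: the bending locus is a spacelike geodesic of $\AdS^3$, and near it the convex core looks like a ``wedge'' of dihedral angle comparable to $w$ (up to bounded error, since the relevant angle is $\pi$ minus something like $2\arctan$ of a quantity proportional to $w$, and for the purposes of a coarse estimate one keeps the linear-in-$w$ leading term). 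Integrating the area of the cross-sectional wedge along the length of $\gamma$ gives a volume bounded below by a constant times $w\,\ell_\gamma(h)=\ell_\lambda(h)$; the constant $\tfrac14$ should come out of normalizing the $\AdS^3$ metric and being careful with the hyperbolic-vs-Lorentzian trigonometry of the wedge. The general case then follows by approximating $\lambda$ by weighted multicurves $\lambda_n\to\lambda$, noting $\Vol(\mathcal C(M_{h,E^{\lambda_n}(h)}))$ and $\ell_{\lambda_n}(h)$ converge to the corresponding quantities for $\lambda$ (continuity of the volume of the convex core under convergence of MGHC structures, and continuity of the length function on $\cML(S)$).

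For the \textbf{upper bound} $\Vol(\mathcal C)\le\frac14\ell_\lambda(h)+\frac{\pi^2}{2}|\chi(S)|$, the plan is to foliate (most of) the convex core by surfaces equidistant from the upper pleated boundary $\partial^+\mathcal C$. Parametrize these surfaces by the Lorentzian distance $t$ to $\partial^+\mathcal C$; there is a maximal such $t$ (at most something like $\pi/4$, since timelike geodesics in $\AdS^3$ close up, and the dual boundary of the convex core is the issue), and the area of the $t$-equidistant surface can be estimated via the first and second fundamental forms of the pleated surface: away from the bending locus the equidistant surfaces of a totally geodesic piece have area changing by a bounded factor, while the contribution of a neighborhood of the bending lamination grows linearly, with the linear coefficient controlled by the transverse measure, i.e. by $\ell_\lambda$. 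Integrating $\Area$ against $dt$ over $t\in[0,t_{\max}]$ then yields a bound of the form (constant)$\cdot\ell_\lambda(h)$ plus (constant)$\cdot\Area(S,h)$, and by Gauss--Bonnet $\Area(S,h)=2\pi|\chi(S)|$, producing the additive $\frac{\pi^2}{2}|\chi(S)|$ term once the constants are tracked; the $\tfrac14$ coefficient on $\ell_\lambda$ should match the one in the lower bound by the same normalization. One technical point is that the equidistant surfaces only foliate the region between $\partial^+\mathcal C$ and its dual; one must check that this region already contains (a definite portion of, or for the coarse bound, enough of) the convex core, or alternatively do the symmetric estimate from $\partial^-\mathcal C$ and combine.

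\textbf{Main obstacle.} I expect the delicate part to be the geometry near the bending lamination in both bounds: making rigorous the ``wedge of angle $\propto w$'' picture for a general (non-simple, possibly with dense leaves) measured lamination, and in particular getting \emph{uniform} (independent of $\lambda$) constants in front of $\ell_\lambda(h)$ with the precise value $\tfrac14$. This requires either a clean approximation argument by multicurves together with uniform estimates, or a direct analysis using the transverse measure and the fact that the total bending along any arc transverse to the pleating locus of an $\AdS$ convex-core boundary is controlled --- the latter boundedness of the bending angles being exactly the phenomenon that is special to the Anti-de Sitter (as opposed to the quasi-Fuchsian hyperbolic) setting and is what makes the clean inequality, with no lower-order correction on the $\ell_\lambda$ side of the lower bound, possible.
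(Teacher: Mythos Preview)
The paper's proof is considerably simpler and proceeds differently. Rather than treating the two inequalities by separate arguments, the paper first establishes (Lemma \ref{lemma volume and lams}, following \cite[Section 8.2.3]{bebo}) the \emph{exact} identity
\[
\Vol(\mathcal C(M))+\Vol(M_-)=\frac{1}{4}\ell_{\lambda}(h)+\frac{\pi^2}{2}|\chi(S)|~,
\]
obtained by foliating the whole region $\mathcal C(M)\cup M_-$ by surfaces equidistant from one pleated boundary component of the convex core. Both inequalities of the theorem are then immediate from the two-sided bound $0\le\Vol(M_-)\le\frac{\pi^2}{2}|\chi(S)|$, where the nontrivial upper bound is Proposition \ref{prop maximal volume}, proved separately via the foliation of the end $M_-$ by constant-curvature surfaces.

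Your equidistant-surface idea for the upper bound is thus headed in the right direction, but you stop short: you propose to foliate ``most of'' the convex core and worry about which region is covered, whereas the decisive move is to push the equidistant surfaces \emph{past} the opposite boundary of the convex core and all the way through one end, yielding an exact volume identity for $\mathcal C(M)\cup M_-$ rather than an inequality for $\mathcal C(M)$ alone. Your independent lower-bound argument via wedges and multicurve approximation then becomes unnecessary --- and indeed the obstacle you correctly flag, namely extracting the sharp constant $\tfrac14$ with no additive loss from a direct geometric estimate on the bending locus, is exactly what the paper's route sidesteps: the $\tfrac14$ falls out of the exact identity, and the additive $\frac{\pi^2}{2}|\chi(S)|$ is absorbed by the end-volume bound. (A small inaccuracy in your setup: the intrinsic hyperbolic metric on $\partial_\pm\mathcal C$ is in general a third metric $h_\pm$, neither $h$ nor $h'$.)
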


The lower inequality of Theorem \ref{cor energiaL1} is then a consequence of Theorem \ref{prop:comparison volume lams} and the basic observation that the total variation of the earthquake map along the lamination $\lambda$ is at most $\ell_\lambda(h)+2\sqrt{2}\pi|\chi(S)|$.

Another straightforward consequence of 
these results is the fact that the length of the left and right earthquake laminations, and the holomorphic energy of the minimal Lagrangian map (up to a factor), are comparable. Namely, their difference is bounded only in terms of the topology of $S$. For instance:

\begin{corx} \label{cor difference lengths}
Given two hyperbolic metrics $h$ and $h'$ on $S$, if $\lambda_l$ and $\lambda_r$ are the measured laminations such that $E_l^{\lambda_l}(h)=h'$ and $E_r^{\lambda_r}(h)=h'$, then
$$|\ell_{\lambda_l}(h)-\ell_{\lambda_r}(h)|\leq 2\pi^2|\chi(S)|~.$$
\end{corx}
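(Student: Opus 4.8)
The plan is to deduce this directly from Theorem \ref{prop:comparison volume lams}, applied twice. The key point, going back to Mess, is that a single maximal globally hyperbolic manifold $M_{h,h'}$ simultaneously encodes \emph{both} the left and the right earthquake from $(S,h)$ to $(S,h')$: the two pleated surfaces bounding the convex core $\mathcal{C}(M_{h,h'})$ induce, respectively, the left earthquake along $\lambda_l$ and the right earthquake along $\lambda_r$, and in both cases they realize the transformation sending $h$ to $h'$. Hence Theorem \ref{prop:comparison volume lams}, which is stated for ``the (left or right) earthquake lamination'', may be invoked for $\lambda=\lambda_l$ and for $\lambda=\lambda_r$ with the very same quantity $\Vol(\mathcal{C}(M_{h,h'}))$ in the middle.

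Concretely, I would first record the two one-sided estimates. Applying Theorem \ref{prop:comparison volume lams} to $\lambda_l$ gives the lower bound $\ell_{\lambda_l}(h)\leq 4\,\Vol(\mathcal{C}(M_{h,h'}))$, while applying it to $\lambda_r$ gives the upper bound $4\,\Vol(\mathcal{C}(M_{h,h'}))\leq \ell_{\lambda_r}(h)+2\pi^2|\chi(S)|$. Concatenating these yields $\ell_{\lambda_l}(h)-\ell_{\lambda_r}(h)\leq 2\pi^2|\chi(S)|$. Exchanging the roles of $\lambda_l$ and $\lambda_r$ — that is, using the lower bound of Theorem \ref{prop:comparison volume lams} for $\lambda_r$ together with its upper bound for $\lambda_l$ — produces the reverse inequality $\ell_{\lambda_r}(h)-\ell_{\lambda_l}(h)\leq 2\pi^2|\chi(S)|$. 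Combining the two gives $|\ell_{\lambda_l}(h)-\ell_{\lambda_r}(h)|\leq 2\pi^2|\chi(S)|$, as claimed.

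There is essentially no analytic obstacle here; the entire content sits in Theorem \ref{prop:comparison volume lams}, and the corollary is a purely formal consequence. The only step deserving a sentence of care is the identification, inside $M_{h,h'}$, of the left earthquake lamination with the bending lamination of one component of $\partial\mathcal{C}(M_{h,h'})$ and of the right earthquake lamination with the other, so that $\ell_{\lambda_l}(h)$ and $\ell_{\lambda_r}(h)$ are genuinely being compared against the \emph{same} volume; this is precisely the input recalled in the discussion preceding Theorem \ref{prop:comparison volume lams}, and invoking it correctly is all that is needed.
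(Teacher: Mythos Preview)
Your proof is correct. It differs slightly from the paper's argument in its packaging: rather than invoking Theorem \ref{prop:comparison volume lams} twice and chaining the resulting inequalities, the paper goes back one step to Lemma \ref{lemma volume and lams}, which gives the \emph{exact} identities
\[
\Vol(\mathcal C(M))+\Vol(M_+)=\tfrac{1}{4}\ell_{\lambda_r}(h)+\tfrac{\pi^2}{2}|\chi(S)|,\qquad
\Vol(\mathcal C(M))+\Vol(M_-)=\tfrac{1}{4}\ell_{\lambda_l}(h)+\tfrac{\pi^2}{2}|\chi(S)|.
\]
Subtracting yields the equality $\tfrac{1}{4}|\ell_{\lambda_l}(h)-\ell_{\lambda_r}(h)|=|\Vol(M_+)-\Vol(M_-)|$, which is then bounded by $\tfrac{\pi^2}{2}|\chi(S)|$ via Proposition \ref{prop maximal volume}. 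This is marginally cleaner (one equality plus one bound, rather than four inequalities), and it exhibits the geometric content more transparently: the length discrepancy \emph{is} the volume discrepancy between the two ends. Your route is logically equivalent, since Theorem \ref{prop:comparison volume lams} is itself derived from the same lemma and proposition; you are simply quoting the packaged consequence rather than its ingredients.
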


More precisely, Corollary \ref{cor difference lengths} follows from a key proposition which is used throughout the paper, see Proposition \ref{prop maximal volume} below, and from an explicit formula for the volume of the union of the convex core of $M_{h,h'}$ and of one of the two ends, in terms of the earthquake laminations. (This was given in \cite[Section 8.2.3]{bebo}, see Lemma \ref{lemma volume and lams})
Corollary \ref{cor difference lengths} seems to be a non-trivial result to obtain using only techinques from hyperbolic geometry. 

\subsection*{Metrics on Teichm\"uller space}
A result like Brock's Theorem for maximal globally hyperbolic Anti-de Sitter manifolds, replacing Bers' parameterization by Mess' parameterization, turns out not to be true. The problem of relating the volume $\Vol(\mathcal C(M_{h,h'}))$ to the distance between $h$ and $h'$ for some metric structure on $\Teich(S)$ was mentioned in \cite[Question 4.1]{questionsads}. We show that the volume of the convex core of a maximal globally hyperbolic manifold $M=M_{h,h'}$ is bounded asymptotically from above by Thurston's asymmetric distance between $(S,h)$ and $(S,h')$ (actually by the minimum of the two asymmetric distances), from below by the Weil-Petersson distance. Neither of these bounds holds on both sides, hence this seems to be the best affirmative answer one can give to this question.
\\

Recall that Thurston's distance $d_\Th(h,h')$ is the logarithm of the best Lipschitz constant of diffeomorphisms from $(S,h)$ to $(S,h')$, isotopic to the identity. This definition satisfies the properties of a distance on $\Teich(S)$, except the symmetry. As the norm of the differential $||df||$, which appears in Theorem \ref{cor energiaL1}, is bounded pointwise by the Lipschitz constant of $f$, we derive the following 
bound from above of the volume with respect to the minimum of the two asymmetric distances:

\begin{thmx}\label{cor:confrontovolumeThurston}
Let $M_{h,h'}$ be a maximal globally hyperbolic $\AdS^{3}$ manifold. Then
\[
	\Vol(\mathcal{C}(M_{h,h'})) \leq \frac{\pi^{2}}{2}|\chi(S)|+\pi|\chi(S)|\exp(\min\{\dth(h,h'),\dth(h',h)\}) \ .
\]
\end{thmx}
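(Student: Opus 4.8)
The plan is to deduce this estimate almost directly from Theorem~\ref{theorem hol energy}, by bounding the $1$-Schatten energy of the minimal Lagrangian map $m:(S,h)\to(S,h')$ in terms of the best Lipschitz constant, and then symmetrizing in the two metrics. Fix $\epsilon>0$. By the very definition of Thurston's asymmetric distance recalled above, there is a diffeomorphism $f_\epsilon\in\Diffeo_{\mathrm{id}}(S)$ whose Lipschitz constant from $(S,h)$ to $(S,h')$ is at most $L:=\exp(\dth(h,h'))+\epsilon$; in particular, at every $x\in S$ the two singular values of $df_\epsilon(x)$ (computed with respect to $h$ and $h'$) are bounded by $L$, so that $\tr(\sqrt{df_\epsilon^*df_\epsilon})\le 2L$ pointwise. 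Since the minimal Lagrangian map $m$ minimizes $E_{Sch}(\cdot,h,h')$ over all of $C^1_{\mathrm{id}}(S)$ (as recalled after the statement of Theorem~\ref{theorem hol energy}), and since the area of $(S,h)$ equals $2\pi|\chi(S)|$ by Gauss--Bonnet, one obtains
\[
E_{Sch}(m,h,h')\ \le\ E_{Sch}(f_\epsilon,h,h')\ =\ \int_S \tr(\sqrt{df_\epsilon^*df_\epsilon})\,d\mathrm A_h\ \le\ 2L\cdot 2\pi|\chi(S)|\ =\ 4\pi L\,|\chi(S)|~.
\]
Inserting this into the upper bound of Theorem~\ref{theorem hol energy} gives $\Vol(\mathcal{C}(M_{h,h'}))\le \frac{\pi^2}{2}|\chi(S)|+\pi L\,|\chi(S)|$, and letting $\epsilon\to 0$ yields $\Vol(\mathcal{C}(M_{h,h'}))\le \frac{\pi^2}{2}|\chi(S)|+\pi|\chi(S)|\exp(\dth(h,h'))$.

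To replace $\dth(h,h')$ by the minimum of the two asymmetric distances, I would invoke the symmetry $\Vol(\mathcal{C}(M_{h,h'}))=\Vol(\mathcal{C}(M_{h',h}))$: exchanging the two $\PSL(2,\R)$-factors in Mess' parameterization is induced by an orientation-reversing isometry of $\AdS^3$, which maps the convex core of $M_{h,h'}$ isometrically onto that of $M_{h',h}$ and hence preserves volumes. Applying the inequality just proved to the manifold $M_{h',h}$ then gives the same bound with $\dth(h',h)$ in place of $\dth(h,h')$, and taking the smaller of the two right-hand sides proves the statement.

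Since Theorems~\ref{cor energiaL1}--\ref{prop:comparison volume lams} are assumed, there is no genuine obstacle here; the argument is a one-line consequence, and the only point worth attention is the bookkeeping of constants. Specifically, one must route through Theorem~\ref{theorem hol energy} rather than through the pointwise inequality $\|df\|\le\sqrt{2}\,\mathrm{Lip}(f)$ combined with Theorem~\ref{cor energiaL1}: for an $L$-Lipschitz map the bound $\tr(\sqrt{df^*df})\le 2L$ is sharper than $\|df\|=\sqrt{\sigma_1^2+\sigma_2^2}\le\sqrt{2}L$, and only the former, together with the coefficient $\tfrac14$ of Theorem~\ref{theorem hol energy}, produces the constant $\pi|\chi(S)|$ appearing in the statement (the other route would only give $2\pi|\chi(S)|$). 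The other mild ingredient is the symmetry of the convex-core volume under exchanging $h$ and $h'$, which is what upgrades the bound to the minimum of the two asymmetric distances.
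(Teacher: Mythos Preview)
Your proof is correct and essentially identical to the paper's own argument: both bound $\tr(b_f)\le 2L(f)$ pointwise, feed this into the upper bound of Theorem~\ref{theorem hol energy} via $E_{Sch}(m,h,h')\le E_{Sch}(f,h,h')$, and then invoke the orientation-reversing isometry $A\mapsto A^{-1}$ of $\AdS^3$ (which swaps the left and right metrics in Mess' parameterization) to symmetrize. Your closing remark about why one must route through Theorem~\ref{theorem hol energy} rather than Theorem~\ref{cor energiaL1} is also on point.
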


{However, the volume of the convex core is not coarsely equivalent to the minimum of Thurston asymmetric distances, as we can produce examples of manifolds $M_{h_n,h'_n}$ in which the minimum $\min\{\dth(h_n,h_n'),\dth(h_n',h_n)\}$ goes to infinity while $\Vol(\mathcal{C}(M_{h_n,h_n'}))$ stays bounded, thus showing that there cannot be a bound from below on the volume using any of Thurston's asymmetric distances. However, in these examples the systole of both $h_{n}$ and $h_{n}'$ go to $0$, and this condition is necessary for this phenomenon to happen. More precisely, the volume $\Vol(\mathcal{C}(M_{h,h'}))$ is coarsely equivalent to the minimum of Thurston asymmetric distances if one of the two points $h,h'$ lie in the $\epsilon$-thick part of $\Teich(S)$.}

{We also provide a sequence of examples $M_{h_g,h'_g}$, on the surface $S_g$ of genus $g\geq 2$, in which the volume $\Vol(\mathcal{C}(M_{h_g,h_g'}))$ actually grows like $|\chi(S_g)|\exp(\min\{\dth(h_g,h_g'),\dth(h_g',h_g)\})$. Hence the growth, with respect to the genus, of the multiplicative factor in Theorem \ref{cor:confrontovolumeThurston} is basically optimal.}



\indent On the other hand, we obtain a coarse bound from below on the volume of the convex core of $M_{h,h'}$ by using the Weil-Petersson distance $d_\weil(h,h')$. 

\begin{thmx}\label{thm:stimaWP} 
Let $M_{h,h'}$ be a maximal globally hyperbolic $\AdS^{3}$ manifold. Then there exist some positive constants $a,b,c>0$  such that
\[
	\exp\left({\frac{a}{|\chi(S)|}\dwp(h,h')-b|\chi(S)|}\right)-c\leq \Vol(\mathcal C(M_{h,h'}))~.
\]
\end{thmx}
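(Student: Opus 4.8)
The plan is to deduce everything from a single upper bound on the Weil--Petersson distance $\dwp(h,h')$ in terms of the length of the earthquake lamination, after which Theorem~\ref{prop:comparison volume lams} does the rest. Let $\lambda$ be the measured geodesic lamination with $E^\lambda(h)=h'$; by Theorem~\ref{prop:comparison volume lams} we have $\Vol(\mathcal C(M_{h,h'}))\geq\frac14\ell_\lambda(h)$. An elementary rearrangement then shows that the statement follows, with a suitable choice of the constants $a,b,c$ (and using $|\chi(S)|\geq 2$), from an estimate of the form
\[
\dwp(h,h')\ \leq\ C_1\,|\chi(S)|\,\log\!\bigl(\ell_\lambda(h)+C_2\bigr)\ +\ C_3\,|\chi(S)|^2
\]
for universal constants $C_i>0$. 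So the whole content of Theorem~\ref{thm:stimaWP} is this inequality: the Weil--Petersson distance between $h$ and its image under the earthquake along $\lambda$ grows \emph{at most logarithmically} in the length of $\lambda$, with controlled dependence on the topology.

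To prove the displayed inequality I would first reduce to $\lambda$ a weighted multicurve: both sides are continuous in $\lambda\in\mathcal{ML}(S)$ (continuity of $\ell_\lambda(h)$ is standard, and continuity of $\lambda\mapsto E^\lambda(h)$, hence of $\lambda\mapsto\dwp(h,E^\lambda(h))$, is Thurston/Kerckhoff), weighted multicurves are dense, and the inequality is closed. For $\lambda=\sum_{i=1}^N c_i\gamma_i$ on disjoint simple closed geodesics I would complete $\{\gamma_i\}$ to a pants decomposition and work in Fenchel--Nielsen coordinates, in which $E^\lambda$ is the twist flow $\tau_{\gamma_i}\mapsto\tau_{\gamma_i}+c_i$, all lengths unchanged. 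The point is to bound the Weil--Petersson length of a path from $h$ to $E^\lambda(h)$ using Wolpert's description of the Weil--Petersson metric near the boundary strata of the augmented Teichm\"uller space: although twisting a curve by a bounded amount costs a bounded distance in the thick r\'egime, a \emph{large} twist around a \emph{short} curve $\gamma_i$ can be performed cheaply by routing the path close to the stratum where $\gamma_i$ is pinched (there the twist direction of the Weil--Petersson metric degenerates), so that the contribution of $\gamma_i$ is controlled by $\ell_{\gamma_i}(h)$ alone --- of the order of $\sqrt{2\pi\,\ell_{\gamma_i}(h)}$ together with a term that is only logarithmic in $c_i$. Summing the $N\leq 3|\chi(S)|$ contributions, using $\sum_i c_i\ell_{\gamma_i}(h)=\ell_\lambda(h)$ and the collar lemma to control the $\ell_{\gamma_i}(h)$, yields the bound. (A parallel but cruder argument settles the thick case: one has $\dwp\leq\sqrt{2\pi|\chi(S)|}\,\dteich$, because $\|q\|_{L^1}\leq\sqrt{\mathrm{Area}_h(S)}\,\|q\|_{\mathrm{WP},\ast}$ on holomorphic quadratic differentials by Cauchy--Schwarz, and $\dteich(h,E^\lambda(h))$ can be bounded logarithmically by an explicit quasiconformal model of the twist inside each collar; but this only works when $h$ lies in the thick part of $\Teich(S)$ and must be combined with the previous analysis.)

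The main obstacle is exactly this thin-part analysis. When $h$ has a very short geodesic $\gamma_i$ and $c_i$ is large compared with $\ell_{\gamma_i}(h)$ --- so that $E^\lambda$ restricted to $\gamma_i$ is an enormous power of a Dehn twist while $\ell_\lambda(h)=\sum_j c_j\ell_{\gamma_j}(h)$ stays moderate --- every Lipschitz- or Teichm\"uller-type comparison blows up, and one genuinely needs that twisting around a short geodesic is Weil--Petersson-cheap, i.e.\ Wolpert's sharp asymptotics near the boundary strata (equivalently, finiteness of $\dwp(h,\mathcal S_{\gamma_i})\asymp\sqrt{2\pi\,\ell_{\gamma_i}(h)}$ together with the vanishing of the twist cost there). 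Making this uniform over the combinatorial type of the multicurve, and extracting the explicit polynomial dependence on $|\chi(S)|$ claimed in the statement, is where essentially all the work lies; the reduction in the first paragraph, the density argument, and the final rearrangement into the stated exponential inequality are routine.
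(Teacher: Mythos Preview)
Your reduction in the first paragraph is exactly right: the whole content is an inequality of the form $\dwp(h,h')\leq C_1|\chi(S)|\log(\ell_\lambda(h)+C_2)+C_3|\chi(S)|^2$, after which Theorem~\ref{prop:comparison volume lams} finishes. But the mechanism you propose to obtain this inequality does not give a logarithmic dependence on $\ell_\lambda(h)$, and that is a genuine gap.

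The pinch--twist--unpinch estimate produces, per support curve $\gamma_i$, a contribution of order $\sqrt{\ell_{\gamma_i}(h)}$ plus something logarithmic in the weight $c_i$. Summed over $N\le 3|\chi(S)|$ curves this is of order $\sum_i\sqrt{\ell_{\gamma_i}(h)}+\sum_i\log c_i$, and there is no way to dominate $\sum_i\sqrt{\ell_{\gamma_i}(h)}$ by $\log\bigl(\sum_i c_i\ell_{\gamma_i}(h)\bigr)$: take $\lambda=c\gamma$ with $\ell_\gamma(h)=L$ large and $c$ chosen so that $cL$ is moderate; then the required bound is uniformly $O(|\chi(S)|^2)$ while your estimate is of order $\sqrt{L}\to\infty$. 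Neither the collar lemma nor the constraint $\sum_i c_i\ell_{\gamma_i}(h)=\ell_\lambda(h)$ gives any control on the individual $\ell_{\gamma_i}(h)$: the support of a weighted multicurve can consist of arbitrarily long geodesics with small weights. (This also poisons the density step: when an irrational lamination is approximated by weighted simple closed curves $c_n\gamma_n$, one typically has $\ell_{\gamma_n}(h)\to\infty$, so the bound you extract does not survive the limit.) Your ``thick case'' comparison $\dwp\le\sqrt{2\pi|\chi(S)|}\,\dteich$ does not rescue this regime either, since you would still need $\dteich(h,E^{c\gamma}(h))$ to be logarithmic in $cL$ uniformly in the length of $\gamma$, which is not what a collar quasiconformal model gives.

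The paper obtains the logarithm by a completely different route. It never builds an explicit Weil--Petersson path from $h$ to $h'$. Instead it fixes a Bers pants decomposition $P$ for $h$ (so $h\in V_{L_g}(P)$ with $L_g$ explicit in $|\chi(S)|$), shows via an elementary intersection argument that $h'\in V_m(P)$ with $m\le L_g+\ell_\lambda(h)/d(L_g)$, and then flows \emph{from $h'$} along the unit vector field $-\grad(\log\ell_P)/\|\grad(\log\ell_P)\|_{\weil}$. The key technical input is the gradient lower bound of Theorem~\ref{thm:mainestimate}, $\|\grad\ell_P\|_{\weil}\ge\frac{a}{|\chi(S)|}\ell_P$, proved via a careful analysis of Riera's formula; this makes $\log\ell_P$ decrease at rate at least $a/|\chi(S)|$ along the flow, so $V_{L_g}(P)$ is reached in time at most $\frac{|\chi(S)|}{a}\log\!\big(m(3g-3)/L_g\big)$. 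One then adds the bounded Weil--Petersson diameter of $V_{L_g}(P)$. The logarithm thus comes from exponential decay along a gradient flow, not from any property of twist paths near boundary strata; your Wolpert-asymptotics analysis plays no role here.
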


There are examples in which $d_\weil(h_n,h_n')$ remains bounded, but $\Vol(\mathcal{C}(M_{h_n,h_n'}))$ diverges, thus the volume of the convex core of $M_{h,h'}$ cannot be bounded from above by the Weil-Petersson distance between $h$ and $h'$.

\subsection*{Further techniques involved in the proofs}
Let us outline here some techniques of the proofs of the above results. A first main difference between the quasi-Fuchsian and the Anti-de Sitter setting consists in the fact that the volume of the whole maximal globally hyperbolic Anti-de Sitter manifold $M_{h,h'}$ is always finite. By considering the foliation by constant curvature surfaces (\cite{barbotzeghib}) of the complement of the convex core, we will show that the volume of $M_{h,h'}$ and the volume of its convex core are coarsely equivalent. More precisely we prove the following:

\begin{propx} \label{prop maximal volume}
Given a maximal globally hyperbolic manifold $M$, let $M_-$ and $M_+$ be the two connected components of the complement of $\mathcal C(M)$. Then
$$\Vol(M_-)\leq \frac{\pi^2}{2}|\chi(S)|\qquad\text{and}\qquad \Vol(M_+)\leq \frac{\pi^2}{2}|\chi(S)|~,$$
with equality if and only $M$ is Fuchsian.
\end{propx}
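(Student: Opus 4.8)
\textbf{Proof proposal for Proposition \ref{prop maximal volume}.}

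The plan is to analyze each component $M_+$ (the argument for $M_-$ being symmetric) by means of the foliation by constant mean curvature --- equivalently, constant Gauss curvature --- spacelike surfaces, whose existence in the complement of the convex core is guaranteed by Barbot--B\'eguin--Zeghib \cite{barbotzeghib}. Concretely, $M_+$ is foliated by surfaces $S_k$ of constant curvature $k$ ranging over $(-\infty,-1)$, with $S_k$ approaching the upper boundary component $\partial_+\mathcal C(M)$ of the convex core as $k\to -\infty$ and degenerating to the (possibly lower-dimensional) future initial singularity as $k\to -1$. First I would set up the Gauss equation for a spacelike surface in $\AdS^3$: if $\II$ denotes the second fundamental form and $k$ the intrinsic curvature, then $k=-1-\det(\mathrm{B})$ where $\mathrm{B}$ is the shape operator, so a surface of constant curvature $k<-1$ has $\det(\mathrm{B})=-1-k>0$; moreover, since the surface is a leaf of a CMC foliation, one controls $\trace(\mathrm{B})$ as well. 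Using this, the $\AdS$ metric on $M_+$ can be written in Gaussian normal coordinates $g=-dt^2+g_t$ with respect to arc length from $\partial_+\mathcal C(M)$, or --- more conveniently --- in terms of the parameter $k$, so that $\Vol(M_+)=\int (\text{area of } S_k)\, \frac{dt}{dk}\, dk$.

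The key analytic input is the Gauss--Bonnet theorem: for each leaf $S_k$, $\int_{S_k} k\, d\mathrm A_{g_k} = 2\pi\chi(S)$, hence $\Area(S_k) = 2\pi\chi(S)/k = 2\pi|\chi(S)|/|k|$, which decays to $0$ as $k\to-\infty$ and is bounded by $2\pi|\chi(S)|$ for all $k<-1$ (its supremal value as $k\to -1^+$). Thus the entire volume estimate reduces to integrating this area against the "time" one-form and bounding the total elapsed normal distance. The standard way this is done --- and I expect this to be the technical heart --- is to change variables from the arc-length parameter to $k$, compute the evolution $\partial_t g_t = 2\, g_t(\mathrm{B}\cdot,\cdot)$ so that $\partial_t(d\mathrm A_{g_t}) = \trace(\mathrm{B})\, d\mathrm A_{g_t}$, and combine the ODEs satisfied by the principal curvatures (which follow from the Codazzi equation plus the constant-curvature condition) to get an explicit expression for $dk/dt$ in terms of $\det(\mathrm B)$ and $\trace(\mathrm B)$. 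One then finds that $\Vol(M_+) = \int_{-\infty}^{-1} \Area(S_k)\,\psi(k)\,dk$ for an explicit elementary function $\psi(k)$ governed by the principal-curvature ODEs, and a direct computation bounds $\int_{-\infty}^{-1}\psi(k)/|k|\,dk$ by $\pi/4$, yielding the constant $\pi^2/2\,|\chi(S)|$ after the factor $2\pi|\chi(S)|$ from Gauss--Bonnet.

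The main obstacle, as I see it, is twofold: first, justifying that the CMC/constant-curvature foliation genuinely exhausts $M_+$ up to measure zero (so that no volume is "hidden" near the initial singularity or near the convex core boundary), which uses that the area of $S_k\to 0$ as $k\to-1$ and that the leaves converge to $\partial_+\mathcal C(M)$ monotonically; and second, carrying out the principal-curvature ODE analysis cleanly enough that the integral $\int\psi(k)/|k|\,dk$ can be evaluated in closed form --- this is where all the $\AdS$ trigonometry enters. For the equality case, the inequality $\Area(S_k)\le 2\pi|\chi(S)|$ together with the strict monotonicity in $\psi$ forces equality exactly when every leaf has area $2\pi|\chi(S)|$, i.e. all curvatures are constant equal to $-1$ in the limit, which happens precisely when $\partial_+\mathcal C(M)$ is itself totally geodesic; combined with the symmetric statement for $M_-$, this is equivalent to $\mathcal C(M)$ having empty interior, i.e. $M$ Fuchsian. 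I would close by remarking that the same computation, pushed one step further, gives the explicit volume of $M_+$ (not just the bound), which is what feeds into Lemma \ref{lemma volume and lams} and Corollary \ref{cor difference lengths}.
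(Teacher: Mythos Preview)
Your overall strategy --- foliate $M_+$ by the constant-curvature leaves of \cite{barbotzeghib}, read off $\Area(S_k)$ from Gauss--Bonnet, and integrate against the normal speed --- is exactly the paper's. But there are two concrete problems, one cosmetic and one substantive.

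The cosmetic one: CMC and constant-Gauss-curvature are \emph{not} equivalent (they coincide only on umbilical leaves, i.e.\ in the Fuchsian case), and you have the foliation oriented backwards. The pleated surface $\partial_+\mathcal C(M)$ is intrinsically hyperbolic of curvature $-1$, so the leaves $S_k$ converge to it as $k\to -1^-$, while the initial singularity sits at $k\to -\infty$ (where $\Area(S_k)\to 0$). The paper reparameterizes by $\kappa=-1-k\in(0,\infty)$ and uses the constant-$K$ foliation, not CMC.

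The substantive gap is that you have misplaced the inequality. Gauss--Bonnet gives $\Area(S_\kappa)=2\pi|\chi(S)|/(1+\kappa)$ \emph{exactly}; there is no slack in the area. The slack is entirely in the normal speed $d\kappa/dt$, which at each point depends on both $\det B$ and $\tr B$ --- and while $\det B=\kappa$ is constant on the leaf, $\tr B$ is not. So there is no ``explicit elementary $\psi(k)$'' with $\Vol(M_+)=\int\Area(S_k)\,\psi(k)\,dk$; what you get is a pointwise inequality. The paper uses the elementary bound $(\tr B)^2\ge 4\det B=4\kappa$ in the evolution of equidistant surfaces to obtain $\|\grad F\|\ge 2\sqrt{\kappa}(1+\kappa)$, whence
\[
\Vol(M_+)\;\le\;\int_0^\infty \frac{\Area(S_\kappa)}{2\sqrt{\kappa}(1+\kappa)}\,d\kappa
\;=\;\pi|\chi(S)|\int_0^\infty\frac{d\kappa}{\sqrt{\kappa}(1+\kappa)^2}
\;=\;\frac{\pi^2}{2}|\chi(S)|~.
\]
Consequently your equality analysis is also off: every leaf has area $2\pi|\chi(S)|/(1+\kappa)$ regardless, and rigidity comes from $(\tr B)^2=4\det B$ forcing all leaves to be umbilical, hence $\partial_+\mathcal C(M)$ totally geodesic and $M$ Fuchsian.
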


Using a foliation by equidistant surfaces from the boundary of the convex core, one can then prove the following formula (see also \cite{questionsads} and \cite[Subsection 8.2.3]{bebo}) which connects the volume of the convex core, the volume of $M_-$, and the length of the left earthquake lamination $\lambda$:
\begin{equation}\label{volume intro}
	\Vol(\mathcal C(M))+\Vol(M_-)=\frac{1}{4}\ell_{\lambda}(h)+\frac{\pi^2}{2}|\chi(S)| ~.
\end{equation}
Of course an analogous formula holds for the right earthquake and for the other connected component $M_+$.
Theorem \ref{prop:comparison volume lams} will then follow by combining Equation \eqref{volume intro} with Proposition \ref{prop maximal volume}. 
\\

\indent Another main consequence of Proposition \ref{prop maximal volume} is the fact that the volume of the convex core of $M_{h,h'}$ is coarsely equivalent to the volume of every domain in which it is contained. Starting from the unique maximal surface embedded in $M_{h,h'}$ (\cite{bbzads}), we construct a domain with smooth boundary $\Omega_{h,h'}$ which contains the convex core and whose volume can be computed explicitly in terms of the function (already introduced in \cite{bms2}) 
\begin{align*}
	F: \Teich(S)\times \Teich(S) &\rightarrow \R^{+}\\
		(h,h') &\mapsto \int_{S}\trace(b) d\mathrm{A}_{h}~,
\end{align*}
where $b=\sqrt{dm^*dm}\in\Gamma(\mathrm{End}(TS))$ is the unique Codazzi, $h$-self-adjoint operator such that $h'=h(b\cdot, b\cdot)$, associated to the minimal Lagrangian diffeomorphism $m:(S,h)\to(S,h')$. In particular, using explicit formulas that relate the embedding data of the maximal surface in $M_{h,h'}$ with the operator $b$, we can prove that 
\[
	\Vol(\Omega_{h,h'})=\frac{\pi^{2}}{2}|\chi(S)|+\frac{1}{4}\int_{S}\trace(b)d\mathrm{A}_{h} ~.
\]
Combined again with Proposition \ref{prop maximal volume}, this implies that the volume of the convex core is coarsely equivalent to the function $F$ defined above, and thus leads to Theorem \ref{theorem hol energy}.
In addition, since $\trace(b)$ is bounded from above by twice the Lipschitz constant of $m$, we will deduce Theorem \ref{cor:confrontovolumeThurston}. \\
\\
\indent As for the relation between the volume of the convex core of $M_{h,h'}$ and the Weil-Petersson distance between $h$ and $h'$, the main technical tool consists in the following estimate:

\begin{thmx}\label{thm:mainestimate} There exists a universal constant $a>0$ such that for every $\lambda \in \mathcal{ML}(S)$ and for every $h \in \Teich(S)$, we have
$$
	\| \grad \ell_{\lambda}(h)\|_{\weil}\geq \frac{a}{|\chi(S)|}\ell_{\lambda}(h) \ .
$$
\end{thmx}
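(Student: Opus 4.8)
The plan is to bound the Weil--Petersson norm of $\grad\ell_\lambda(h)$ from below by a suitable ``test vector'' pairing, using the first variation formula for length of a measured lamination together with a comparison between the Weil--Petersson pairing and the Thurston (intersection) pairing. Recall Wolpert's formula: the Weil--Petersson symplectic form pairs the gradient of length $\ell_\lambda$ with the infinitesimal earthquake (twist) deformation $t_\mu$ along another lamination $\mu$ via the geometric intersection $i(\lambda,\mu)$, and more precisely the cosine formula gives the first variation of $\ell_\lambda$ along the earthquake flow $E^{s\lambda}$. Differentiating $\ell_\lambda(E^{s\lambda}(h))$ at $s=0$ produces a quantity which is, up to a universal constant, comparable to $\ell_\lambda(h)$ itself --- this is essentially the statement that earthquaking along $\lambda$ increases $\ell_\lambda$ at a definite rate (the self-intersection contribution). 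So the test vector will be the infinitesimal earthquake $\dot E^{s\lambda}|_{s=0} \in T_h\Teich(S)$.

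The key steps, in order, are as follows. First, I would recall the precise first-variation identity for $\frac{d}{ds}\big|_{s=0}\ell_\lambda(E^{s\lambda}(h))$; for $\lambda$ a weighted simple closed curve this is classical and for general $\lambda\in\cML(S)$ it follows by continuity/density. This derivative equals $\langle \grad\ell_\lambda(h), V\rangle_\weil$ where $V$ is the earthquake vector field along $\lambda$. Second, I would bound $\|V\|_\weil$ from above: the earthquake vector field along $\lambda$ has Weil--Petersson norm controlled by the length $\ell_\lambda(h)$ times a constant depending on $\chi(S)$ --- here one uses that the $L^2$-norm (with respect to the hyperbolic metric $h$, which after normalization is comparable to the Weil--Petersson norm up to the factor involving $\mathrm{Area}(S,h)=2\pi|\chi(S)|$) of the harmonic Beltrami differential representing the twist is comparable to $\ell_\lambda(h)$. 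Third, I would bound the derivative $\frac{d}{ds}\big|_{s=0}\ell_\lambda(E^{s\lambda}(h))$ from below by a universal multiple of $\ell_\lambda(h)$; for a weighted curve $\lambda = w\gamma$ one computes $\frac{d}{ds}\ell_{w\gamma}(E^{sw\gamma}(h)) = w^2 \cdot (\text{something positive bounded below})$, and $\ell_{w\gamma}(h) = w\,\ell_\gamma(h)$, so one needs a lower bound on $w \cdot (\text{variation rate})$ --- this is where the collar lemma enters, giving $\ell_\gamma(h)$ a definite lower-bound role, or alternatively one uses the Kerckhoff convexity/cosine-formula estimates. Combining: $\frac{a'}{|\chi(S)|}\ell_\lambda(h) \leq \frac{d}{ds}\big|_{0}\ell_\lambda(E^{s\lambda}(h)) = \langle\grad\ell_\lambda, V\rangle_\weil \leq \|\grad\ell_\lambda\|_\weil\,\|V\|_\weil \leq \|\grad\ell_\lambda\|_\weil \cdot C|\chi(S)|^{1/2}\ell_\lambda(h)^{1/2}$, which after dividing rearranges --- but note the powers of $\ell_\lambda$ don't match cleanly, so I expect the honest argument pairs $\grad\ell_\lambda$ against the \emph{unit-normalized} earthquake vector $V/\|V\|_\weil$ and uses instead that $\langle\grad\ell_\lambda, V/\|V\|_\weil\rangle_\weil$ is bounded below by $(a/|\chi(S)|)\ell_\lambda(h)$ directly, via the sharp form of Wolpert's cosine formula $\frac{d}{ds}\ell_\lambda(E^{s\mu}h) = \int_{\lambda}\cos\theta\, d\mu$ specialized to $\mu=\lambda$, where all angles $\theta$ vanish so the integrand is maximal.

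More carefully, the cleanest route: by Wolpert, $\grad\ell_\lambda(h)$ is Weil--Petersson dual (up to sign and the symplectic form) to the twist field $t_\lambda$, and $\|\grad\ell_\lambda\|_\weil = \|t_\lambda\|_\weil$ up to a universal constant since $\omega_\weil$ is compatible with the metric. Then it suffices to show $\|t_\lambda\|_\weil \geq \frac{a}{|\chi(S)|}\ell_\lambda(h)$. Writing $t_\lambda$ via the infinitesimal earthquake and pairing it back with $\grad\ell_\lambda$ gives $\langle t_\lambda, \grad\ell_\lambda\rangle_\weil = \frac{d}{ds}\big|_0 \ell_\lambda(E^{s\lambda}h)$, which by the cosine formula with $\mu = \lambda$ equals a positive quantity comparable to $\ell_\lambda(h)$; but I also need to control $\|\grad\ell_\lambda\|_\weil$ from above in terms of $\ell_\lambda(h)$ and $|\chi(S)|$ to close the loop, and this upper bound (essentially $\|\grad\ell_\lambda\|_\weil \leq C\,\ell_\lambda(h)$ with $C$ universal, or with mild $|\chi|$-dependence) is standard from the Cauchy--Schwarz estimate on the length-form together with $\mathrm{Area}(S,h) = 2\pi|\chi(S)|$. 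Combining the lower bound on the pairing with the upper bound on one factor yields the claim with $a$ universal.

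The main obstacle will be step three: obtaining the \emph{lower} bound on the self-variation $\frac{d}{ds}\big|_0\ell_\lambda(E^{s\lambda}(h))$ by a definite fraction of $\ell_\lambda(h)$, uniformly over all $h$ and all $\lambda$, with the correct dependence on $|\chi(S)|$. For a single weighted simple closed geodesic this is an explicit hyperbolic-trigonometry computation (and is positive because the curve crosses its own collar), but for a general measured lamination with dense leaves one must argue by approximation and ensure the estimate does not degenerate --- the subtlety is that the ``width'' over which the earthquake shear acts is tied to the transverse measure, and controlling it uniformly requires either the collar lemma applied leaf-by-leaf or a global $L^2$ estimate on the earthquake cocycle. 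Ensuring the constant is truly universal (independent of the topological type beyond the explicit $|\chi(S)|$ factor) is the delicate point, and I would expect to invoke the structure of the thick-thin decomposition to handle laminations that concentrate mass on short curves separately from those that do not.
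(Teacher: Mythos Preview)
Your proposal has a fatal gap at its central step. You repeatedly invoke the quantity
\[
\left.\frac{d}{ds}\right|_{s=0}\ell_\lambda\bigl(E^{s\lambda}(h)\bigr)
= \langle \grad\ell_\lambda(h),\, t_\lambda\rangle_\weil
\]
and assert it is ``a positive quantity comparable to $\ell_\lambda(h)$''. But this derivative is \emph{identically zero}. For a simple closed geodesic $\gamma$, the earthquake along $\gamma$ leaves $\gamma$ invariant as a geodesic of the same length, so $\ell_\gamma(E^{s\gamma}(h))$ is constant in $s$. Equivalently, since $t_\lambda = -J\,\grad\ell_\lambda$ by Wolpert's duality and the Weil--Petersson metric is K\"ahler,
\[
\langle \grad\ell_\lambda,\, t_\lambda\rangle_\weil
= -\langle \grad\ell_\lambda,\, J\,\grad\ell_\lambda\rangle_\weil
= -\omega_\weil(\grad\ell_\lambda,\grad\ell_\lambda)=0.
\]
Your claim that ``all angles $\theta$ vanish so the integrand is maximal'' misreads the cosine formula: the first variation $\tfrac{d}{ds}\ell_\mu(E^{s\lambda}h)$ is a sum over \emph{transverse} intersections of $\mu$ with $\lambda$, and a simple closed curve has none with itself. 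You may be conflating this with Kerckhoff's convexity, which concerns the \emph{second} derivative.

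Because this vanishing kills your test-vector pairing, none of the subsequent Cauchy--Schwarz manipulations can be salvaged: you would need a different test vector $\mu$ with $\langle\grad\ell_\lambda, t_\mu\rangle_\weil$ bounded below by a definite multiple of $\ell_\lambda(h)$ \emph{and} $\|t_\mu\|_\weil$ bounded above, uniformly in $h$. Producing such a $\mu$ is essentially the whole problem, and nothing in your outline addresses it. The paper's proof proceeds along an entirely different route: it reduces to simple closed curves and applies Riera's explicit formula for $\|\grad\ell_c\|_\weil^2$ as a sum over double cosets, then performs a thick--thin analysis of the geodesic representative of $c$, bounding the sum from below by direct hyperbolic-geometric estimates in each regime. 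No test-vector or variational argument is used.
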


This will be obtained by a careful analysis of Riera's formula \cite{riera} for the norm of the Weil-Petersson gradient of the length function. With this in hand, the proof of Theorem \ref{thm:stimaWP} then goes as follows. By a result of Bers', we can fix a pants decomposition $P$ for $h$ such that the length of all curves $\alpha_{j}$ in $P$ are smaller than a constant $L>0$ (only depending on the genus of $S$). If the metric $h'$ is obtained by performing a (left or right) earthquake along a measured geodesic lamination $\lambda$, then the length of the curves $\alpha_{j}$ increases at most by
\[
	\ell_{\alpha_{j}}(h') \leq L+\frac{\ell_{\lambda}(h)}{d(L)} \ ,
\]
where $d(L)$ is a constant depending only on $L$. We can thus say that the point $h' \in \Teich(S)$ belongs to the set
\[
	V_{m}(S)=\{ h \in \Teich(S) \ | \ \ell_{\alpha_{j}}(h)<m\}
\]
if we put $m=L+\ell_{\lambda}(h)/d(L)$. As a consequence of Theorem \ref{thm:mainestimate}, the integral curve of the vector field $X=-\grad \ell_{\lambda_{P}}/\|\grad \ell_{\lambda_{P}}\|_{\weil}$, (where we denoted with $\lambda_{P}$ the measured geodesic lamination consisting of the simple closed curves $\alpha_{j}$ with unit weight) starting at $h'$ will intersect the set $V_{L}(S)$ in a finite time $t_{0}$, which we are able to express explicitely in terms of $\ell_{\lambda}(h)$ and the constants $L$ and $d(L)$. Theorem \ref{thm:stimaWP} will then follow from the fact that the set $V_{L}(S)$ has bounded diameter for the Weil-Petersson metric and from Theorem \ref{prop:comparison volume lams}.

\subsection*{Organization of the paper}In Section \ref{sec:preliminari} we introduce maximal globally hyperbolic Anti-de Sitter manifolds and we underline their relation with earthquake maps and minimal Lagrangian diffeomorphisms. In Section \ref{sec:lunghezzalaminazioni} we study the volume of the complement of the convex core and we prove Theorem \ref{prop:comparison volume lams} and Corollary \ref{cor difference lengths}. Section \ref{sec:energiaL1} is devoted to the comparison between the volume of the convex core and the holomorphic energy (Theorem \ref{theorem hol energy}). Then Theorem \ref{cor energiaL1}  is proved in Section \ref{sec main thm}. In Section \ref{sec:distanzaThurston} we study the relation between the volume of the convex core and Thurson's asymmetric distance, thus proving Theorem \ref{cor:confrontovolumeThurston}. In Section \ref{sec:distanzaWP} we focus on the lower bound with respect to the Weil-Petersson metric, in particular Theorem \ref{thm:stimaWP}. The main estimate of Theorem \ref{thm:mainestimate} is then proved in Section \ref{sec:stimagradiente}. 

\subsection*{Acknowledgements}
A large part of this work was done during a visit of the second author to the University of Luxembourg, and a visit of the third author to the University of Pavia. We are grateful to those Institutions for their hospitality. We would like to thank Olivier Glorieux and Nicolas Tholozan for several interesting discussions. The third author would also like to thank his PhD advisor Jean-Marc Schlenker for many useful conversations about Anti-de Sitter geometry. We would like to thank an anonymous referee for several useful comments and remarks.

\section{Preliminaries on Anti-de Sitter geometry}\label{sec:preliminari}

Anti-de Sitter space $\AdS^3$ is a Lorentzian manifold of signature $(2,1)$, topologically a solid torus, of constant sectional curvature $-1$. We will adopt the following $\PSL(2,\R)$ model of $\AdS^3$: consider the quadratic form 
$$q=-\det$$ 
on the vector space $\mathfrak{gl}(2,\R)$ of 2-by-2 matrices. It is easy to check that $q$ is non-degenerate and has signature $(2,2)$. Hence the restriction of $q$ induces a quadratic form of signature $(2,1)$  on the submanifold of $\mathfrak{gl}(2,\R)$ defined by the equation $q=-1$. Namely, on
$\SL(2,\R)$, the submanifold of square matrices with unit determinant. 

Therefore, the polarization of $q$ defines a Lorentzian metric on $\SL(2,\R)$. This Lorentzian metric is invariant by multiplication by $-\mathrm{id}$, and therefore it defines a Lorentzian metric on
$$\PSL(2,\R)=\SL(2,\R)/\{\pm\mathrm{id}\}~.$$
The metric on $\PSL(2,\R)$ will be simply denoted by $\langle\cdot,\cdot\rangle$. 

\begin{defi}
Anti-de Sitter space of dimension three is $\PSL(2,\R)$ endowed with the Lorentzian metric: 
$$\AdS^3=(\PSL(2,\R),\langle\cdot,\cdot\rangle)~.$$
\end{defi}

Anti-de Sitter space is orientable and time-orientable, and its group of orientation and time-orientation preserving isometries is:
\begin{equation} \label{eq isometry group}
\isom_0(\AdS^3)=\PSL(2,\R)\times \PSL(2,\R)~,
\end{equation}
where the action of $\PSL(2,\R)\times \PSL(2,\R)$ is the following:
$$(\alpha,\beta)\cdot \gamma=\alpha\cdot\gamma\cdot \beta^{-1}~,$$
for any $\alpha,\beta,\gamma\in \PSL(2,\R)$.

The \emph{boundary} of $\AdS^3$ is defined as the projectivization of rank 1 matrices:
$$\partial\AdS^3=\mathbb{P}\{\alpha\neq 0:\det\alpha=0\}$$
where the convergence of a sequence of points in $\AdS^3$ to a point in $\partial\AdS^3$ is defined by considering $\AdS^3$ and $\partial\AdS^3$ as subsets of $\R\mathbb{P}^3$. Then 
\begin{equation} \label{eq boundary}
\partial\AdS^3\cong \mathbb{RP}^1\times \mathbb{RP}^1~,
\end{equation}
where the identification sends $[M] \in \partial\AdS^{3}$ to the couple $([\Ima(M)], [\Ker(M)])$. It can then be checked that the action of $\isom_0(\AdS^3)$ extends to $\partial\AdS^3$ and corresponds (using Equations \eqref{eq isometry group} and \eqref{eq boundary}) to the obvious action of $\PSL(2,\R)\times \PSL(2,\R)$ on $\mathbb{RP}^1\times \mathbb{RP}^1$.

\subsection{Maximal globally hyperbolic Anti-de Sitter manifolds} \label{sec:volume}

Let us recall the definition of (maximal) globally hyperbolic manifolds:

\begin{defi} \label{defi MGH}
A Lorentzian manifold $M$ is \emph{globally hyperbolic} if is contains a \emph{Cauchy surface}, that is, a spacelike surface $S$ which intersects every inextensible causal curve exactly in one point. Such a manifold $M$ is \emph{maximal} if every isometric embedding $\varphi:M\to M'$, such that $M'$ is a globally hyperbolic manifold and $\varphi(S)$ is a Cauchy surface of $M'$, is necessarily a global isometry.
\end{defi}

Any globally hyperbolic manifold $M$ as in Definition \ref{defi MGH} is topologically a product, $M\cong S\times \R$, see \cite{MR0270697}. We will be interested in Anti-de Sitter maximal globally hyperbolic manifolds, namely, locally isometric to $\AdS^3$. Moreover, in this paper any Cauchy surface will have the topology of a closed, oriented surface $S$ of genus $g\geq 2$.

The deformation space of those objects was described by Mess in \cite{Mess}. More precisely, let us denote
$$\mathcal{MGH}(S)=\{\text{maximal globally hyperbolic }\AdS^3\text{ metrics on }S\times\R\}/\Diffeo_{\mathrm{id}}(S\times\R)~,$$
where $\Diffeo_{\mathrm{id}}$ denotes the group of diffeomorphisms isotopic to the identity, and it acts on the set of metrics by pull-back. This definition actually resembles the following definition of Teichm\"uller space:
$$\Teich(S)=\{\text{hyperbolic metrics on }S\}/\Diffeo_{\mathrm{id}}(S)~.$$
(In this last definition, a hyperbolic metric simply is a Riemannian metric of constant curvature $-1$.) Then Mess' result is the following description of $\mathcal{MGH}(S)$ in terms of Teichm\"uller space:

\begin{theorem}[\cite{Mess}] \label{mess theorem}
For every closed oriented surface $S$ of genus $g\geq 2$,
$$\mathcal{MGH}(S)\cong \Teich(S)\times \Teich(S)~.$$
\end{theorem}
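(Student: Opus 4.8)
The plan is to follow Mess' holonomy approach. The starting point is that a maximal globally hyperbolic $\AdS^3$ structure on $S\times\R$ is determined, up to the relevant equivalence, by its holonomy representation $\rho\colon\pi_1(S)\to\isom_0(\AdS^3)=\PSL(2,\R)\times\PSL(2,\R)$, together with a developing map that is unique up to post-composition with the isometry group. Writing $\rho=(\rho_l,\rho_r)$ in terms of the two factors, the first task is to show that each $\rho_l$ and $\rho_r$ is the holonomy of a hyperbolic structure on $S$, i.e.\ a discrete and faithful (Fuchsian) representation into $\PSL(2,\R)$; this is what will produce the point $(h,h')\in\Teich(S)\times\Teich(S)$. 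The map $\mathcal{MGH}(S)\to\Teich(S)\times\Teich(S)$ will then be $[g]\mapsto([\rho_l],[\rho_r])$, and the bulk of the work is to prove this is well-defined, injective, and surjective.

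First I would establish that the two projected representations are Fuchsian. Given a Cauchy surface $\Sigma$ in $\widetilde M\subset\AdS^3$ (working in the universal cover and the $\PSL(2,\R)$-model described above), one uses the two projections $\partial\AdS^3\cong\rp\times\rp\to\rp$. The key geometric input is that a Cauchy surface is spacelike and achronal, so its boundary at infinity is the graph of an orientation-preserving homeomorphism of $\rp$ — the Gauss map / left and right "developing circle" — equivariant under $\rho_l$, resp.\ $\rho_r$. Equivariance of a homeomorphism of $\rp$ under a surface group representation forces that representation to have Euler number $\pm\chi(S)$, hence (by Goldman's theorem, or directly) to be Fuchsian. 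This gives the two points in $\Teich(S)$ and shows the map is well-defined on diffeomorphism classes (changing $g$ by an isotopy changes $\rho$ by conjugation, changing neither class in Teichm\"uller space). For injectivity, I would argue that the developing map of $\widetilde M$ is reconstructed from the pair $(\rho_l,\rho_r)$: the domain of dependence (invisible domain) of the limit curve $\Lambda_\rho\subset\partial\AdS^3$ — the graph of the homeomorphism conjugating $\rho_l$ to $\rho_r$ — is a well-defined $\rho$-invariant convex open subset of $\AdS^3$, and maximality forces $\widetilde M$ to be exactly this domain of dependence. Thus two MGH structures with the same $([\rho_l],[\rho_r])$ are isometric by an isometry isotopic to the identity.

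For surjectivity, given any $(h,h')\in\Teich(S)\times\Teich(S)$ with Fuchsian holonomies $\rho_l,\rho_r$, I would construct the manifold directly: the homeomorphism $\rp\to\rp$ conjugating $\rho_l$ to $\rho_r$ (which exists and is unique because both act minimally with the same combinatorics, being Fuchsian of the same genus) has a graph $\Lambda\subset\rp\times\rp\cong\partial\AdS^3$ which is an achronal topological circle; its domain of dependence $\Omega(\Lambda)\subset\AdS^3$ is open, convex, globally hyperbolic, and invariant under $\rho=(\rho_l,\rho_r)$, which acts on it freely and properly discontinuously. The quotient $\Omega(\Lambda)/\rho$ is then a maximal globally hyperbolic $\AdS^3$ manifold mapping to $(h,h')$; maximality follows because $\Omega(\Lambda)$ is by construction the maximal domain of dependence of any Cauchy surface it contains. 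Finally one checks the map and its inverse are continuous (smooth) for the natural topologies, using that holonomy varies continuously and that the domain of dependence depends continuously on $\Lambda$, hence on $(\rho_l,\rho_r)$.

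The main obstacle, I expect, is the construction and analysis of the domain of dependence $\Omega(\Lambda)$ and the verification that the group acts properly discontinuously with maximal hyperbolic quotient — this requires genuine $\AdS^3$ causal geometry (achronality of $\Lambda$, that $\Omega(\Lambda)$ is precisely the union of points whose every inextensible causal curve meets a fixed Cauchy surface, properness of the action on it). Showing that the two boundary homeomorphisms of a Cauchy surface are genuinely homeomorphisms (not merely monotone relations) and extracting from equivariance that the holonomies are Fuchsian is the other delicate point; it is cleanest to invoke the Euler-number characterization of Fuchsian representations rather than argue by hand. Since this is a restatement of Mess' theorem (\cite{Mess}), in the paper it suffices to cite the original argument, and the proof given here will amount to recalling this structure and referring to \cite{Mess} (and the exposition in \cite{notes} or \cite{bbzads}) for the details.
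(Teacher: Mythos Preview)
Your proposal is correct and follows essentially the same route that the paper sketches immediately after the statement: take the holonomy $\rho=(\rho_l,\rho_r)$, use that the asymptotic boundary of a Cauchy surface is the graph of an orientation-preserving homeomorphism of $\mathbb{RP}^1$ to deduce that both $\rho_l$ and $\rho_r$ have maximal Euler class (hence are Fuchsian), and build the inverse by taking the domain of dependence of the graph of the unique homeomorphism conjugating $\rho_l$ to $\rho_r$ and quotienting by $(\rho_l,\rho_r)$. The paper does not give a self-contained proof either; it cites \cite{Mess} and recalls exactly this construction, so your outline is aligned with both the paper's exposition and Mess' original argument.
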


The homeomorphism of Theorem \ref{mess theorem} goes as follows. First, recall that the Teichm\"uller space of $S$ is identified to a certain connected component in the space of representations of $\pi_1(S)$ into $\PSL(2,\R)$, considered up to conjugation. In fact, this identification is obtained by taking the conjugacy class of the holonomy represention of a hyperbolic metric on $S$, and the desired connected component is given by the subset of representations with \emph{maximal}  Euler class:
$$\Teich(S)\cong\{\rho_0:\pi_1(S)\to\PSL(2,\R): \ e(\rho_0)=|\chi(S)|\}/\PSL(2,\R)~.$$

See \cite{goldmanthesis}. Representations of maximal Euler class, that is, such that $e(\rho_0)=|\chi(S)|$, are called \emph{Fuchsian}.

Mess proved that for every globally hyperbolic $\AdS^3$ manifold $M$, the holonomy representation
$$\rho=(\rho_l,\rho_r):\pi_1(S)\to \PSL(2,\R)\times \PSL(2,\R)$$
is such that $e(\rho_l)=e(\rho_r)=|\chi(S)|$, and therefore $([\rho_l],[\rho_r])$ defines a point in $\Teich(S)\times \Teich(S)$.
(Here $\rho:\pi_1(S)\to\isom_0(\AdS^3)$ is identified to a pair of representations into $\PSL(2,\R)$ by means of Equation \eqref{eq isometry group}.)
The representations $\rho_l$ and $\rho_r$ are called \emph{left holonomy} and \emph{right holonomy}.

\begin{example} \label{ex fuchsian}
If $h$ is a hyperbolic metric on $S$, then one can define the following metric on $M=S\times (-\pi/2,\pi/2)$, where $t$ is the ``vertical'' coordinate:
\begin{equation} \label{eq fuchsian metric}
g_h=-dt^2+\cos^2(t)h~.
\end{equation}
It turns out that $g_h$ has constant sectional curvature $-1$, that $S\times\{0\}$ is a totally geodesic Cauchy surface, and that $(M,g_h)$ is maximal globally hyperbolic. A developing map 
$$\mathrm{dev}:\tilde M=\tilde S\times (-\pi/2,\pi/2)\to\AdS^3$$ 
can be chosen so that the lift of the Cauchy surface $S\times\{0\}$ is mapped to the totally geodesic plane 
\begin{equation} \label{defi plane}
P_0=\{\alpha\in \PSL(2,\R):\alpha^2=\mathrm{id}\}=\{\alpha\in \PSL(2,\R):\trace(\alpha)=0\}~.
\end{equation}
Moreover, the lines $\{\star\}\times (-\pi/2,\pi/2)$ in $\tilde M$ are sent to the timelike geodesics $l$ orthogonal to $P_0$, where $t$ corresponds to the arclenght parameter. It turns out that every such geodesic $l$ coincides with the set $l_x$ of elliptic elements of $\PSL(2,\R)$ which fix a point $x$ in the upper-half plane. All these geodesics $l_x$ meet, for $t=\pm\pi/2$, at $[\mathrm{id}]\in\PSL(2,\R)$. 

Since the left and right holonomy of $(M,g_h)$ preserve the plane $P_0$, they also preserve $[\mathrm{id}]\in\PSL(2,\R)$, and therefore $\rho_l(\gamma)=\rho_r(\gamma)$. The maximal globally hyperbolic manifolds for which $[\rho_l]=[\rho_r]\in\Teich(S)$ are called \emph{Fuchsian} and correspond to the diagonal in 
$$\mathcal{MGH}(S)\cong \Teich(S)\times \Teich(S)~.$$
Equivalently, they contain a totally geodesic spacelike surface isometric to $\Hyp^2/\rho_0(\pi_1(S))$, where $\rho_0:=\rho_l=\rho_r$.

\end{example}

The following is a direct computation of the volume, using Equation \eqref{eq fuchsian metric}:

\begin{cor} \label{cor volume fuchsian}
The volume of any Fuchsian globally hyperbolic $\AdS^3$ manifold $M_F$ homeomorphic to $S\times\R$ is:
\begin{equation} \label{eq volume fuchsian metric}
\Vol(M_F)=\pi^2|\chi(S)|~.
\end{equation}
\end{cor}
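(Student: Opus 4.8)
The plan is to reduce to the explicit model metric \eqref{eq fuchsian metric} and carry out the integration directly. First I would note that, by Mess' parameterization (Theorem \ref{mess theorem}) together with the discussion in Example \ref{ex fuchsian}, any Fuchsian globally hyperbolic $\AdS^3$ manifold homeomorphic to $S\times\R$ is isometric to $(S\times(-\pi/2,\pi/2),g_h)$ for some hyperbolic metric $h$ on $S$. The time interval is forced to be exactly $(-\pi/2,\pi/2)$, since this is the domain of dependence of the totally geodesic plane $P_0$ in $\AdS^3$, so maximality leaves no freedom. Thus it suffices to compute $\Vol(S\times(-\pi/2,\pi/2),g_h)$.

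Next I would write down the volume form of $g_h=-dt^2+\cos^2(t)h$. Being a warped product, in a local coordinate chart the metric is block-diagonal, with the $\partial_t$-block equal to $-1$ and the surface block equal to $\cos^2(t)h$, which is a $2$-dimensional Riemannian metric with area form $\cos^2(t)\,d\mathrm{A}_h$. Hence the Lorentzian volume element is $\sqrt{|\det g_h|}\,dt\wedge d\mathrm{A}_h=\cos^2(t)\,dt\wedge d\mathrm{A}_h$, and by Fubini
$$\Vol(M_F)=\left(\int_{-\pi/2}^{\pi/2}\cos^2(t)\,dt\right)\cdot\Area(S,h)~.$$

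Finally I would evaluate the two factors: $\int_{-\pi/2}^{\pi/2}\cos^2(t)\,dt=\pi/2$, while the Gauss--Bonnet theorem gives $\Area(S,h)=2\pi|\chi(S)|$ since $h$ has constant curvature $-1$. Multiplying yields $\Vol(M_F)=(\pi/2)\cdot 2\pi|\chi(S)|=\pi^2|\chi(S)|$, as claimed. There is no genuine obstacle here; the only point that deserves an explicit word is the first step, namely that every Fuchsian maximal globally hyperbolic manifold really is the maximal extension $(S\times(-\pi/2,\pi/2),g_h)$ and not a proper sub-domain of it, which is precisely what maximality in Mess' classification provides.
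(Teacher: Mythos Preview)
Your argument is correct and is exactly the direct computation using Equation \eqref{eq fuchsian metric} that the paper has in mind; the paper does not spell out the details but simply states that the corollary follows by this computation. Your remark about why the time interval is forced to be $(-\pi/2,\pi/2)$ is a useful clarification of the only non-computational point.
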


\begin{remark} \label{rmk dual}
From Example \ref{ex fuchsian}, it turns out that the timelike geodesics orthogonal to any totally geodesic plane $P$
all meet at the same point at a distance $\pi/2$ in both directions. Such point of intersection will be called \emph{dual point}, since it can be interpreted also as the dual of $P$ with respect to the duality of $\mathbb{RP}^3$. 

It turns out that two spacelike planes $P_1$ and $P_2$ intersect if and only if their dual points $p_1$ and $p_2$ are connected by a spacelike segment. In this case the length of the spacelike segment connecting $p_1$ and $p_2$ equals the (hyperbolic) angle between the unit normal vectors of $P_1$ and $P_2$.
\end{remark}

Going back to Theorem \ref{mess theorem}, Mess explicitly constructed an inverse of the map $\mathcal{MGH}(S)\to \Teich(S)\times \Teich(S)$ we have just defined. For this purpose, given a pair of representations $\rho_l,\rho_r:\pi_1(S)\to\PSL(2,\R)$, it is well-known that there exists a unique 
orientation-preserving homeomorphism $\phi:\mathbb{RP}^1\to \mathbb{RP}^1$ such that $\phi\circ\rho_l(\gamma)=\rho_r(\gamma)\circ \phi$. By the identification of Equation \eqref{eq boundary}, the graph of $\phi$ corresponds to a submanifold of $\partial\AdS^3$, and
Mess showed that $\rho(\pi_1(S))$ acts freely and properly discontinously on the \emph{domain of dependence} of $gr(\phi)$ (see definition below). The quotient of the domain of dependence $\mathcal{D}(\phi)$ is a maximal globally hyperbolic $\AdS^3$ manifold, whose left and right holonomies are $\rho_l$ and $\rho_r$ by construction.

\begin{defi}
Given an orientation-preserving homeomorphism $\phi:\mathbb{RP}^1\to \mathbb{RP}^1$, 
the \emph{domain of dependence} of $gr(\phi)$, which we denote by $\mathcal{D}(\phi)$, is the union of points $p\in\AdS^3$ which are dual to spacelike planes $P$ such that $\partial P$ is disjoint from $gr(\phi)$ in $\partial\AdS^3$.
\end{defi}

If $\rho_l,\rho_r:\pi_1(S)\to\PSL(2,\R)$ are Fuchsian representations such that
 $\phi\circ\rho_l(\gamma)=\rho_r(\gamma)\circ \phi$ for every $\gamma\in\pi_1(S)$, then we denote
 $$M_{h_l,h_r}
 :=\mathcal{D}(\phi)/(\rho_l,\rho_r)(\pi_1(S))~,$$
 where $h_l$ and $h_r$ are the hyperbolic metrics of $S$ induced by $\Hyp^2/\rho_l(\pi_1(S))$ and $\Hyp^2/\rho_r(\pi_1(S))$ respectively. It follows from Mess' proof that the class of $M_{h_l,h_r}$ in $\mathcal{MGH}(S)$
only depends on the class of $h_l$ and $h_r$ in $\Teich(S)$.

Mess introduced the notion of convex core in maximal globally hyperbolic $\AdS^3$ manifolds.

\begin{defi} \label{defi convex core}
Given $\rho_l,\rho_r:\pi_1(S)\to\PSL(2,\R)$ Fuchsian representations, let $\phi:\mathbb{RP}^1\to \mathbb{RP}^1$ 
be the orientation-preserving homeomorphism such that $\phi\circ\rho_l(\gamma)=\rho_r(\gamma)\circ \phi$ for every $\gamma\in\pi_1(S)$, and let $\mathcal{C}(\phi)$ be the \emph{convex hull} of $gr(\phi)$. 

Then 
 $$\mathcal{C}(M_{h_l,h_r}):=\mathcal{C}(\phi)/(\rho_l,\rho_r)(\pi_1(S))$$
  is the \emph{convex core} of the maximal globally hyperbolic manifold $M_{h_l,h_r}$, where $h_l$ and $h_r$ are the hyperbolic metrics of $S$ induced by $\Hyp^2/\rho_l(\pi_1(S))$ and $\Hyp^2/\rho_r(\pi_1(S))$.
\end{defi}

There is a non-trivial point in Definition \ref{defi convex core}, that is, the convex hull of $gr(\phi)\subset \partial\AdS^3$ is contained in $\AdS^3$. This is not obvious since $\AdS^3$ is not convex in $\mathbb{RP}^3$. Moreover, the definition makes sense since the convex hull $\mathcal{C}(\phi)$ is contained in the domain of dependence $\mathcal{D}(\phi)$.
 See \cite{bon_schl}.

\subsection{Earthquake maps from pleated surfaces}

It is well-known that the convex hull of a maximal globally hyperbolic manifold $M$ is topologically $S\times I$, for $I$ a closed interval, and its two boundary components $\partial_\pm\mathcal C(M)$ are \emph{pleated surfaces}, that is, 
there exists an isometric map from a hyperbolic surface $(S,h_\pm)$ to $M$, with image $\partial_\pm\mathcal C(M)$, 
such that every point $x\in S$ 
is in the interior of an $h_\pm$-geodesic arc which is mapped isometrically to a spacelike geodesic of $M$.

This defines a \emph{bending lamination} $\lambda_\pm$ on $\partial_\pm\mathcal C(M)$, and a transverse measure is defined, which encodes the amount of bending and makes $\lambda_\pm$ a measured geodesic lamination, \cite{bebo}. Let  $\phi$ be the homeomorphism conjugating 
$\rho_l$ to $\rho_r$, as above. One can moreover define maps
$$\pi_l^\pm:\partial_\pm\mathcal C(\phi) \to\Hyp^2\qquad \pi_r^\pm:\partial_\pm\mathcal C(\phi) \to\Hyp^2$$
in the following way. 
Given any point $x\in \partial_+ \mathcal C(\phi)$ which admits a unique totally geodesic support plane $P_x$, consider the unique left isometry $\alpha_x$ which maps $P_x$ to $P_0$. Recall that $P_0$, defined in Equation \eqref{defi plane}, is the totally geodesic plane composed of the involutions of $\PSL(2,\R)$, and
 is identified to $\Hyp^2$ by means of the map
\begin{equation} \label{eq identification}
\alpha\in P_0\mapsto \mathrm{Fix}(\alpha)\in\Hyp^2~.
\end{equation}
 Then define
$$\pi_l^+:x\mapsto\alpha_x(x)\mapsto \mathrm{Fix}(\alpha_x(x))\in\Hyp^2\,,$$
Analogous definitions hold for $\pi_r^+$, using right isometries, and for $\pi_l^-$ and $\pi_r^-$, using the other boundary $\partial_- \mathcal C(\phi)$ of the convex hull.
The map $\pi_l^+$ induces an earthquake map from $(S,h_+)$ to $(S,h_l)$, defined in the complement of the weighted leaves of the bending lamination $\lambda_+$,
where we recall that $h_l$ is the left hyperbolic metric given by $\Hyp^2/(\rho_l(\pi_1(S))$ and $h_+$ is the metric induced on $\partial_+\mathcal C(M)$. Analogously there is a right earthquake map from $(S,h_+)$ to $(S,h_r)$
and similar maps are obtained from the lower boundary of $\mathcal C(M)$.

Given a measured geodesic lamination $\lambda$, we will denote 

$$E_l^{\lambda}:\Teich(S)\to\Teich(S)~,$$
the left earthquake map, and
$$E_r^{\lambda}:\Teich(S)\to\Teich(S)~,$$
the right earthquake map.
Hence, in \cite{Mess}, a new interpretation in terms of Anti-de Sitter geometry was given to the proof in \cite{thurstonearth} of the following celebrated theorem of Thurston:

\begin{theorem}[Earthquake theorem] \label{thm thurston}
Given two hyperbolic metrics $h,h'$ on a closed oriented surface $S$, there exists a unique pair of measured laminations $\lambda_l$, $\lambda_r$ such that
$$E_l^{\lambda_l}(h)=h' \qquad \text{and}\qquad E_r^{\lambda_r}(h)=h'~.$$ 
\end{theorem}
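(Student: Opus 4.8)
The plan is to deduce both existence and uniqueness from Mess' Anti-de Sitter picture, using the convex core of $M_{h,h'}$ and the earthquake maps attached to its pleated boundary. Fix Fuchsian representations $\rho_l,\rho_r\colon\pi_1(S)\to\PSL(2,\R)$ uniformizing $h$ and $h'$, let $\phi\colon\mathbb{RP}^1\to\mathbb{RP}^1$ be the orientation-preserving homeomorphism with $\phi\circ\rho_l(\gamma)=\rho_r(\gamma)\circ\phi$ for all $\gamma$, and form the maximal globally hyperbolic manifold $M_{h,h'}=\mathcal{D}(\phi)/(\rho_l,\rho_r)(\pi_1(S))$ with convex core $\mathcal{C}(M_{h,h'})=\mathcal{C}(\phi)/(\rho_l,\rho_r)(\pi_1(S))$. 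Recall from the discussion above that the two boundary components $\partial_\pm\mathcal{C}(M_{h,h'})$ are pleated surfaces carrying bending laminations $\lambda_\pm$ and induced hyperbolic metrics $h_\pm$, and that the projections $\pi_l^\pm,\pi_r^\pm$ induce earthquake maps from $(S,h_\pm)$ onto $(S,h)$ and $(S,h')$.

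\emph{Existence.} Fix the upper boundary $\partial_+\mathcal{C}(\phi)$. The maps $\pi_l^+$ and $\pi_r^+$ realize $h=h_l$ and $h'=h_r$ as, respectively, a left and a right earthquake image of $h_+$ along the same geodesic lamination $\lambda_+$, carrying its bending measure. Reversing the first earthquake turns it into a right earthquake from $h$ to $h_+$ along the image lamination, and composing with $\pi_r^+$ --- using the flow property of earthquakes along a fixed measured lamination, so that two shears in the same direction add their transverse measures --- identifies $h'$ with $E_r^{\lambda_r}(h)$, where $\lambda_r$ is the geodesic lamination $(\pi_l^+)_*\lambda_+$ equipped with (twice) the bending measure. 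Performing the same construction on the lower boundary $\partial_-\mathcal{C}(\phi)$, where the causal asymmetry between the two components of the convex hull reverses the sign of the shears, yields instead a measured lamination $\lambda_l$ with $E_l^{\lambda_l}(h)=h'$.

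\emph{Uniqueness.} By Mess' parameterization (Theorem \ref{mess theorem}) the manifold $M_{h,h'}$, hence its convex core, its two pleated boundary components and their bending laminations, are all uniquely determined by $(h,h')$; so the measured laminations $\lambda_l,\lambda_r$ produced above depend only on $(h,h')$. To promote this to genuine uniqueness one must exclude any other left (resp. right) earthquake carrying $h$ to $h'$, i.e. show that $\mu\mapsto E_l^\mu(h)$ is injective on $\mathcal{ML}(S)$. The plan is to run the construction in reverse: from the shear data of a given $\mu$ with $E_l^\mu(h)=h'$ one builds an equivariant achronal pleated surface in $\mathcal{D}(\phi)$, bent along $\mu$ and convex on the appropriate side; one then argues that such a surface is forced to coincide with the boundary component $\partial_-\mathcal{C}(\phi)$ of the convex hull, so that $\mu=\lambda_l$. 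Alternatively, one can appeal to Thurston's original argument through the strict monotonicity of earthquake maps.

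I expect the uniqueness statement to be the main obstacle: existence amounts to a careful, but essentially combinatorial, accounting of shears along the convex core boundary together with the flow property of earthquakes, whereas uniqueness requires showing that the bending lamination can be recovered from the pair $(h,h')$ --- equivalently, that the pleated surface produced by a left earthquake is rigid enough to be pinned to a convex-core boundary component --- and this rigidity is where the real content of Thurston's theorem lies.
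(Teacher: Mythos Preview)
The paper does not actually prove this theorem: it is stated as a classical result of Thurston, with references to \cite{thurstonearth} for the original proof and to \cite{Mess} for the Anti-de Sitter reinterpretation. So there is no ``paper's own proof'' to compare against; your plan is precisely the Mess approach that the paper alludes to in the paragraph preceding the statement.

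Your existence argument is essentially correct. One point worth sharpening: when you compose the inverse of $\pi_l^+$ with $\pi_r^+$, the two shears are along the \emph{same} lamination (the bending lamination $\lambda_+$, transported to $(S,h)$), and the flow property $E_r^{s\mu}\circ E_r^{t\mu}=E_r^{(s+t)\mu}$ applies directly; the factor of two you mention is the right bookkeeping.

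For uniqueness you have correctly located the genuine content. Uniqueness of $M_{h,h'}$ only shows that the laminations \emph{produced by your construction} are well-defined; it does not by itself exclude a different $\mu$ with $E_l^\mu(h)=h'$. Your reverse construction is the right idea, but the step you defer---``such a surface is forced to coincide with $\partial_-\mathcal C(\phi)$''---is exactly where the work lies. What makes it go through is that bending $\Hyp^2$ along $\mu$ via a \emph{left} earthquake produces a pleated surface that is locally convex (all support planes lie on one side, because all shears have the same sign), is achronal, and has asymptotic boundary $gr(\phi)$; a nowhere-timelike locally convex surface spanning $gr(\phi)$ must then bound the convex hull, forcing $\mu=\lambda_l$. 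Without this convexity argument (or Thurston's original monotonicity argument, which you mention as an alternative), the uniqueness half remains a plan rather than a proof.
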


\subsection{Minimal Lagrangian maps from maximal surfaces} \label{subsec min lag maximal surf}

By a similar construction, it is possible to use Anti-de Sitter geometry to construct minimal Lagrangian maps between hyperbolic surfaces. Let us recall the definition of minimal Lagrangian diffeomorphism:

\begin{defi} \label{defi min lag} An orientation-preserving diffeomorphism $m:(S,h) \rightarrow (S,h')$ is minimal Lagrangian if it is area-preserving and its graph is a minimal surface in $(S\times S, h\oplus h')$.
\end{defi}

It is known \cite[Proposition 1.3]{bon_schl} that minimal Lagrangian diffeomorphisms are characterized by having a decomposition $m=(f')\circ f^{-1}$, where $f$ and $f'$ are harmonic maps with respect to a complex structure $X$ on $S$ and have opposite Hopf differential. That is:
$$f:(S,X)\to (S,h)\qquad\text{and}\qquad f':(S,X)\to (S,h')$$
are harmonic, where $(S,X)$ is Riemann surface, and 
$$\mathrm{Hopf}(f)=-\mathrm{Hopf}(f')~.$$

Given a maximal surface $\Sigma_0$ in a maximal globally hyperbolic $\AdS^3$-manifold, that is, a space-like surface of vanishing mean curvature, consider its lift to $\AdS^3$, which is a surface $\widetilde\Sigma_0$ of vanishing mean curvature with $\partial \widetilde\Sigma_0=gr(\phi)$, where $\phi:\mathbb{RP}^1\to \mathbb{RP}^1$ 
 the orientation-preserving homeomorphism such that $\phi\circ\rho_l(\gamma)=\rho_r(\gamma)\circ \phi$ for every $\gamma\in\pi_1(S)$. Then one can define the two projections
$$\pi_l^0:\widetilde\Sigma_0 \to\Hyp^2\qquad \pi_r^0:\widetilde\Sigma_0 \to\Hyp^2$$
exactly in the same way as we did in the previous section, that is,
$$\pi_l^0:x\mapsto\alpha_x(x)\mapsto \mathrm{Fix}(\alpha_x(x))\in\Hyp^2\,,$$
where $\alpha_x$ is the unique left isometry sending the plane $P_x$ tangent to $\widetilde\Sigma_0$ at $x$ to the plane $P_0$ of involutions, and we are identifying $P_0$ to $\Hyp^2$ by means of \eqref{eq identification}. An analogous definition holds for $\pi_r^0$.

It turns out that 
$\pi_l^0$ and $\pi_r^0$ are harmonic diffeomorphisms, thus they induce harmonic maps from $\Sigma_0$ to $(S,h)$ and $(S,h')$, which have opposite Hopf differential. Hence the map $m$ {associated} to the maximal surface $\Sigma_0$ is a minimal Lagrangian diffeomorphism. Moreover, all minimal Lagrangian diffeomorphisms from $(S,h)$ to $(S,h')$ are obtained in this way. Hence, from results (see for instance \cite{bbzads} and \cite{bon_schl} for a generalisation) of existence and uniqueness of maximal surfaces in maximal globally hyperbolic $\AdS^3$-manifolds 
one can reprove the following theorem (see \cite{labourieCP}, \cite{Schoenharmonic}):

\begin{theorem} \label{thm min lag}
Given two hyperbolic metrics $h,h'$ on a closed oriented surface $S$, there exists a unique minimal Lagrangian diffeomorphism $m:(S,h)\to (S,h')$ isotopic to the identity.
\end{theorem}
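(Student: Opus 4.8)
The plan is to deduce Theorem~\ref{thm min lag} from the existence and uniqueness of a maximal surface in $M_{h,h'}$ together with the dictionary, recalled above, between maximal surfaces and minimal Lagrangian maps. First I would establish \emph{existence}. Given $h,h'\in\Teich(S)$, form the maximal globally hyperbolic manifold $M_{h,h'}$ associated by Mess' parameterization (Theorem~\ref{mess theorem}). By the results on maximal surfaces in maximal globally hyperbolic $\AdS^3$-manifolds (\cite{bbzads}, and \cite{bon_schl} for a general version), $M_{h,h'}$ contains an embedded maximal (i.e.\ vanishing mean curvature) Cauchy surface $\Sigma_0$, unique up to the action of $\pi_1(S)$. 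Lifting $\Sigma_0$ to a surface $\widetilde\Sigma_0\subset\AdS^3$ with $\partial\widetilde\Sigma_0=gr(\phi)$, the two projections $\pi_l^0,\pi_r^0:\widetilde\Sigma_0\to\Hyp^2$ described in Subsection~\ref{subsec min lag maximal surf} are equivariant harmonic diffeomorphisms with opposite Hopf differentials; they descend to harmonic maps $f:(S,X)\to(S,h)$ and $f':(S,X)\to(S,h')$, where $X$ is the conformal class induced by the first fundamental form of $\Sigma_0$, with $\mathrm{Hopf}(f)=-\mathrm{Hopf}(f')$. By the characterization \cite[Proposition~1.3]{bon_schl}, $m=f'\circ f^{-1}:(S,h)\to(S,h')$ is then a minimal Lagrangian diffeomorphism. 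Since $\Sigma_0$ is a Cauchy surface homotopic to the fiber $S\times\{0\}$, both $f$ and $f'$ are homotopic to the identity, hence so is $m$.

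Next I would prove \emph{uniqueness}. Suppose $m:(S,h)\to(S,h')$ is any minimal Lagrangian diffeomorphism isotopic to the identity. By \cite[Proposition~1.3]{bon_schl} it decomposes as $m=f'\circ f^{-1}$ with $f:(S,X)\to(S,h)$, $f':(S,X)\to(S,h')$ harmonic and $\mathrm{Hopf}(f)=-\mathrm{Hopf}(f')=:q$, for some Riemann surface structure $X$ on $S$. Running the construction of Subsection~\ref{subsec min lag maximal surf} backwards, the data $(X,q)$ and the pair $(f,f')$ determine, via the Gauss–Codazzi equations in $\AdS^3$, an equivariant maximal embedding of $\widetilde X$ into $\AdS^3$ whose first and second fundamental forms are prescribed by $X$ and (the real part of) $q$, and whose left/right projections recover $f$ and $f'$. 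The boundary at infinity of the image is the graph of the homeomorphism conjugating $\rho_l=\mathrm{hol}(h)$ to $\rho_r=\mathrm{hol}(h')$, so this maximal surface lies in $M_{h,h'}$ and is a Cauchy surface. By the uniqueness part of the existence/uniqueness theorem for maximal surfaces in $M_{h,h'}$, this surface is the unique maximal one; hence $(X,q)$ is uniquely determined, and therefore so is $m$.

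The main obstacle is making the correspondence ``maximal surface $\leftrightarrow$ pair of opposite-Hopf-differential harmonic maps'' genuinely bijective rather than merely one-directional: one must check that every minimal Lagrangian $m$ arises from an \emph{embedded spacelike} maximal surface in the correct manifold $M_{h,h'}$, i.e.\ that the harmonic maps $f,f'$ reconstruct an equivariant maximal immersion which is actually a Cauchy embedding and whose asymptotic boundary is exactly $gr(\phi)$. This requires the Gauss–Codazzi integration argument in $\AdS^3$ (as in \cite{bbzads}, \cite{bon_schl}) and a verification that the induced holonomies are $\rho_l,\rho_r$. Once this equivalence is in place, both existence and uniqueness of $m$ transfer directly from the corresponding statements for maximal surfaces, and the isotopy-to-the-identity normalization is immediate since the maximal Cauchy surface is isotopic to the standard fiber. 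Alternatively, one may simply cite \cite{labourieCP} and \cite{Schoenharmonic} for the analytic statement and regard the above as the $\AdS^3$-geometric reproof announced before the theorem.
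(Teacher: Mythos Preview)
Your proposal is correct and follows exactly the approach the paper indicates: the theorem is stated with references to \cite{labourieCP} and \cite{Schoenharmonic}, and the preceding paragraph explains that it can be reproved via the existence and uniqueness of maximal surfaces in $M_{h,h'}$ (\cite{bbzads}, \cite{bon_schl}) together with the correspondence between maximal surfaces and minimal Lagrangian maps described in Subsection~\ref{subsec min lag maximal surf}. Your write-up simply makes that sketch explicit, and your final remark about citing \cite{labourieCP}, \cite{Schoenharmonic} directly matches what the paper actually does.
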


\section{Length of earthquake laminations}\label{sec:lunghezzalaminazioni}

We will discuss an explicit relation {between} the volume of a maximal globally hyperbolic manifold (or the volume of its convex core) {and} the length of the (left and right) earthquake laminations. 

Before that, we will prove that the volume of the complement of the convex hull is bounded by the volume of a Fuchsian manifold. That is, the volume of $M\setminus \mathcal C(M)$ is maximal in the Fuchsian case. Hence, from a coarse point of view, the volume of the manifold $M$ is essentially the same as the volume of its convex core.

\subsection{Volume of the complement of the convex hull.}

Given a maximal globally hyperbolic manifold $M$, we will denote
$$M\setminus\mathcal C(M)=M_+\sqcup M_-~,$$
where $M_+$ is the connected component adjacent to $\partial_+\mathcal C(M)$, and $M_-$ the other connected component. The following proposition estimates the volume of the complement of the convex core.

\begin{repprop}{prop maximal volume}
Given a maximal globally hyperbolic manifold $M$, let $M_-$ and $M_+$ be the two connected components of the complement of $\mathcal C(M)$. Then
$$\Vol(M_-)\leq \frac{\pi^2}{2}|\chi(S)|\qquad\text{and}\qquad \Vol(M_+)\leq \frac{\pi^2}{2}|\chi(S)|~,$$
with equality if and only $M$ is Fuchsian.
\end{repprop}
It will then obviously follow that 
\begin{equation} \label{eq pi2}
\Vol(M\setminus \mathcal C(M))\leq \pi^2|\chi(S)|~,
\end{equation}
that is (compare Corollary \ref{cor volume fuchsian}), the volume of $\Vol(M\setminus \mathcal C(M))$ is at most the volume of a Fuchsian manifold.

\begin{proof}
We will give the proof for $M_{+}$. By \cite{barbotzeghib}, there exists a function
$$F:M_+\to [0,\infty)$$
such that $S_\kappa=F^{-1}(\kappa)$ is the surface of constant curvature $K=-1-\kappa$. By the Gauss equation in the $\AdS^3$ setting, if $B$ is the shape operator of $S_\kappa$, then $\kappa=\det B_x$ for every point $x\in S_\kappa$.

Let $\varphi_{t}$ be the flow of the vector field $\grad F/||\grad F||^2$. By definition $\varphi_{t}(S_{\kappa})=S_{\kappa +t}$ and following this flow, one obtains the following expression for the volume of $M_+$:
\begin{equation} \label{eq volume kappa}
\Vol(M_+)=\int_0^\infty\int_{S_\kappa}\frac{d\mathrm{Area}_{S_\kappa}}{||\grad F||}d\kappa~.
\end{equation}

Let us now fix some $\kappa\in(0,\infty)$. Let $x(\rho)$ be the normal flow of $S_{\kappa}$, which is well-defined for a small $\rho$. We adopt the convention that the unit normal of $S_{\kappa}$ is pointing towards the concave side of $S_{\kappa}$. Let $S_{\kappa}(\rho)$ be the parallel surface of $S_{\kappa}$ at distance $\rho$, in the concave side. Using the formula for the shape operator of $S_{\kappa}(\rho)$, see \cite{Schlenker-Krasnov} or \cite[Lemma 1.14]{seppimaximal}, we get:
$$B_\rho=(\cos(\rho) E+\sin(\rho) B)^{-1}(-\sin(\rho) E+\cos(\rho) B)$$
where $B_\rho$ is the shape operator of $S_{\kappa}(\rho)$.
Hence
$$\det B_\rho=\frac{\sin^2\rho+\cos^2\rho\det B-(\sin\rho\cos\rho)\tr B}{\cos^2\rho+\sin^2\rho\det B+(\sin\rho\cos\rho)\tr B}~.$$
With our convention, $\tr B<0$ since $S_\kappa$ is concave, and thus, using the inequality $(\tr B)^2\geq 4\det B=4\kappa$, we have
$$\det B_\rho\geq \frac{\sin^2\rho+\cos^2\rho\,\kappa+2\sin\rho\cos\rho\sqrt\kappa}{\cos^2\rho+\sin^2\rho \,\kappa-2\sin\rho\cos\rho\sqrt\kappa}~,$$
which implies that $\inf \det B_\rho\geq f(\rho)$,
where
$$f(\rho)=\frac{\sin^2\rho+\cos^2\rho\,\kappa+2\sin\rho\cos\rho\sqrt\kappa}{\cos^2\rho+\sin^2\rho \,\kappa-2\sin\rho\cos\rho\sqrt\kappa}~.$$
Hence by an application of the maximum principle (compare for instance \cite{barbotzeghib}) one has that $S_{\kappa}(\rho)$ lies entirely in the concave side of $S_{f(\rho)}$. In other words, $F(x(\rho))\geq f(\rho)$.

Observe that the timelike vector fields $\dot x(\varrho)$ and $\grad F(x(\varrho))$ are collinear when $\varrho=0$. Hence for every $\epsilon>0$, there exists $\rho_0>0$ such that for every $\varrho<\rho_0$ we have
\[
	\langle\grad F(x(\varrho)),\dot x(\varrho)\rangle \leq ||\grad F(x(\varrho))||(1+\epsilon) \ ,
\]
hence
$$
F(x(\rho))-F(x(0))= \int_0^\rho\langle\grad F(x(\varrho)),\dot x(\varrho)\rangle d\varrho \\
\leq (1+\epsilon) \int_0^\rho ||\grad F(x(\varrho))||d\varrho~,
$$
for $\rho<\rho_0$. On the other hand
$$F(x(\rho))-F(x(0))\geq f(\rho)-\kappa~,$$
and thus by differentiating at $\rho=0$:
$$||\grad F(x(0))||(1+\epsilon)\geq \left.\frac{d}{d\rho}\right|_{\rho=0}f(\rho)=2\sqrt\kappa(\kappa+1)~.$$

We can finally conclude the computation. From Equation \eqref{eq volume kappa}, we have
$$\Vol(M_+)\leq \int_0^{+\infty} \frac{(1+\epsilon)}{2\sqrt\kappa(\kappa+1)}\int_{S_\kappa}{d\mathrm{Area}_{S_\kappa}}d\kappa=(1+\epsilon)\int_0^{+\infty} \frac{2\pi|\chi(S)|d\kappa}{2\sqrt\kappa(\kappa+1)^2}=\frac{\pi^2}{2}|\chi(S)|(1+\epsilon)~,$$
where we have used the Gauss-Bonnet formula, the fact that the Gaussian curvature of $S_\kappa$ is $-1-\kappa$, and that
$$\int_0^{+\infty} \frac{dx}{\sqrt{x}(1+x)^2}=\frac{\pi}{2}~.$$ 

Finally, let us observe that equality holds if and only if $(\tr B)^2=4\det B$ at every point, which is the case in which all the surfaces $S_\kappa$ are umbilical. This implies that the boundary of the convex core is totally geodesic, and thus $M$ is a Fuchsian manifold.
\end{proof}

A direct consequence of Proposition \ref{prop maximal volume}, using Equation \eqref{eq pi2}, is that the volume of $M$ and of the convex core of $M$ are roughly comparable:

\begin{cor} \label{cor volume manifold and convex core}
Given a maximal globally hyperbolic manifold $M$,
$$\Vol(\mathcal C(M))\leq \Vol(M)\leq \Vol(\mathcal C(M))+\pi^2|\chi(S)|~.$$
\end{cor}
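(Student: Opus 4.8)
The plan is to decompose $M$ into the convex core and its complement and then apply Proposition \ref{prop maximal volume} together with finite additivity of the volume. First I would recall, as observed just before the statement, that $M\setminus\mathcal C(M)=M_+\sqcup M_-$ is the disjoint union of the two connected components of the complement of the convex core. Since the common boundary $\partial\mathcal C(M)=\partial_+\mathcal C(M)\sqcup\partial_-\mathcal C(M)$ is a union of two pleated (in particular Lipschitz, indeed piecewise totally geodesic) spacelike surfaces, it has vanishing three-dimensional volume, so that
$$\Vol(M)=\Vol(\mathcal C(M))+\Vol(M_+)+\Vol(M_-)~.$$

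The lower bound $\Vol(\mathcal C(M))\leq\Vol(M)$ is then immediate, since $\Vol(M_+)\geq 0$ and $\Vol(M_-)\geq 0$. For the upper bound I would invoke Equation \eqref{eq pi2}, which follows from Proposition \ref{prop maximal volume} by summing the two estimates $\Vol(M_\pm)\leq\frac{\pi^2}{2}|\chi(S)|$; this gives $\Vol(M_+)+\Vol(M_-)=\Vol(M\setminus\mathcal C(M))\leq\pi^2|\chi(S)|$. Substituting into the displayed identity yields $\Vol(M)\leq\Vol(\mathcal C(M))+\pi^2|\chi(S)|$, as claimed.

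There is essentially no obstacle here; the corollary is a bookkeeping consequence of Proposition \ref{prop maximal volume}. The only point deserving a word of justification is that the interface between the convex core and its complement is negligible for volume computations, which is clear from the regularity of pleated surfaces. Alternatively, one can bypass this remark entirely by noting that $M$, $\mathcal C(M)$, $\overline{M_+}$ and $\overline{M_-}$ are the closures of pairwise disjoint open sets whose union is dense in $M$, so finite additivity of Lebesgue measure applies directly.
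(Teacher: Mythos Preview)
Your argument is correct and matches the paper's approach exactly: the corollary is stated as a direct consequence of Proposition~\ref{prop maximal volume} via Equation~\eqref{eq pi2}, and the paper gives no further details. Your added remark on the negligibility of $\partial\mathcal C(M)$ is a reasonable clarification (though in the alternative phrasing you presumably meant only $\mathcal C(M)$, $\overline{M_+}$, $\overline{M_-}$, not $M$ itself, among the closures of pairwise disjoint open sets).
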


%

\subsection{Length of earthquake laminations}

In this subsection we will prove a coarse relation between the volume of a maximal globally hyperbolic manifold $M_{h,h'}$ and the length of the earthquake laminations of the (both left and right) earthquake maps from $(S,h)$ to $(S,h')$, provided by the Earthquake Theorem ({Theorem} \ref{thm thurston}). 

Before stating the main results of this subsection, we finally need to recall the definition of \emph{length} of a measured geodesic lamination. Let us denote by
$\mathcal{ML}(S)$ the set of measured laminations on $S$, up to isotopy.
 The set
of weighted multicurves 
$$(\mathbf{c},\mathbf{a})=((c_1,a_1),\ldots,(c_n,a_n))~,$$
 where $c_i$ are essential simple closed curves on $S$ and $a_i$ are positive weights, is dense in $\mathcal{ML}(S)$. The well-posedness of the following definition then follows from \cite{MR847953}.

\begin{defi}
Given a closed orientable surface $S$ of genus $g\geq 2$, we denote
$$\ell: \mathcal{ML}(S)\times \Teich(S)\to [0,+\infty)$$
the unique continuous function such that, for every weighted multicurve $(\mathbf{c},\mathbf{a})$,
$$\ell((\mathbf{c},\mathbf{a}),[h])=\sum_{i=0}^n a_i\mathrm{length}_{h}(c_i)~,$$
where $\mathrm{length}_{h}(c)$ denotes the length of the $h$-geodesic representative 
in the isotopy class of $c$.
Then we define the \emph{length function} associated to a measured lamination $\lambda$ as the function
$$\ell_{\lambda}:\Teich(S)\to[0,+\infty)$$
defined by $\ell_{\lambda}([h])=\ell(\lambda,[h])$.
\end{defi}

Similarly, we also recall the definition of \emph{topological intersection} for measured geodesic laminations:
\begin{defi} \label{defi intersection}
Given a closed orientable surface $S$ of genus $g\geq 2$, we denote
$$\iota: \mathcal{ML}(S)\times \mathcal{ML}(S)\to [0,+\infty)$$
the unique continuous function such that, for every pair of simple closed curves $\lambda=(c,w)$ and $\lambda'=(c',w')$,
$$\iota(\lambda,\lambda')=w\cdot w'\cdot \#(\gamma\cap \gamma')~,$$
where $\gamma$ and $\gamma'$ are geodesic representatives of $c$ and $c'$ for any hyperbolic metric on $S$. 
\end{defi}

The following is the first step towards a relation between the volume of a maximal globally hyperbolic manifold and the length of the left and right earthquake laminations of the Earthquake Theorem (Theorem \ref{thm thurston}). 

\begin{lemma} \label{lemma volume and lams}
Given a maximal globally hyperbolic manifold $M=M_{h,h'}$, let $\lambda_l$ and $\lambda_r$ be the measured laminations such that $E_l^{\lambda_l}(h)=h'$ and $E_r^{\lambda_r}(h)=h'$. Then
\begin{equation} \label{eq complement 1}
\Vol(\mathcal C(M))+\Vol(M_+)=\frac{1}{4}\ell_{\lambda_r}(h)+\frac{\pi^2}{2}|\chi(S)|~,
\end{equation}
and 
\begin{equation} \label{eq complement 2}
\Vol(\mathcal C(M))+\Vol(M_-)=\frac{1}{4}\ell_{\lambda_l}(h)+\frac{\pi^2}{2}|\chi(S)|~.
\end{equation}
\end{lemma}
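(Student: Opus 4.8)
The plan is to compute $\Vol(\mathcal{C}(M)\cup M_+)$ directly, by foliating this region with the surfaces equidistant from the lower boundary $\partial_-\mathcal{C}(M)$, and then to read off \eqref{eq complement 1}; equation \eqref{eq complement 2} will follow by the symmetric argument applied to $\partial_+\mathcal{C}(M)$ and the past-directed normal flow. (This is in substance the computation of \cite[Section 8.2.3]{bebo}.)

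First I would set up the foliation. Flowing from $\partial_-\mathcal{C}(M)$ along the future-directed unit normal, the equidistant surfaces $\Sigma_\rho$ at Lorentzian distance $\rho\in[0,\pi/2)$ are embedded and foliate exactly $\mathcal{C}(M)\cup M_+$; as $\rho\to\pi/2$ they converge onto the dual of the pleated surface $\partial_-\mathcal{C}(M)$, which is one-dimensional (dual points of the flat pieces joined by the dual segments of the bending leaves), hence of zero area, and sits on the boundary of the domain of dependence. No focal points occur because $\partial_-\mathcal{C}(M)$ bends ``towards the past'', so the timelike normals of two adjacent flat pieces diverge in the future direction — it is here that the finiteness of the flow time (and the contrast with the quasi-Fuchsian case) enters. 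Since $\partial_\rho$ is a unit timelike field orthogonal to $\Sigma_\rho$, one gets $\Vol(\mathcal{C}(M)\cup M_+)=\int_0^{\pi/2}\Area(\Sigma_\rho)\,d\rho$.

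Then I would compute $\Area(\Sigma_\rho)$ by cutting $\Sigma_\rho$ along the stratification of $\partial_-\mathcal{C}(M)$ induced by its bending lamination $\lambda_-$. Over a complementary flat piece, the equidistant surface is the equidistant surface of a totally geodesic plane; in the quadric model it is $\{\cos\rho\,p+\sin\rho\,p^{\dual}\}$ with $p^{\dual}$ the dual point, so its induced metric is $\cos^2\rho$ times the hyperbolic metric, and the flat part contributes $\cos^2\rho\cdot\Area_{h_-}(S)=2\pi|\chi(S)|\cos^2\rho$ (the lamination having zero area). Over a bending leaf of $h_-$-length $\ell$ and transverse measure $\theta$, $\Sigma_\rho$ contains an annular ``lune'', a portion of the equidistant surface of the spacelike geodesic carrying the leaf; realized as $\{\cos\rho\,\gamma(s)+\sin\rho\,u_a\}$ with $\gamma$ unit-speed and $u_a$ the unit timelike normals interpolating the two dual points, one checks its induced metric is $\cos^2\rho\,ds^2+\sin^2\rho\,da^2$ on $[0,\ell]\times[0,\theta]$, of area $\tfrac12\sin(2\rho)\,\ell\theta$. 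Approximating $\lambda_-$ by weighted multicurves and passing to the limit, the total bending contribution is $\tfrac12\sin(2\rho)\,\ell_{\lambda_-}(h_-)$, whence
\[
\Vol(\mathcal{C}(M)\cup M_+)=\int_0^{\pi/2}\Bigl(2\pi|\chi(S)|\cos^2\rho+\tfrac12\sin(2\rho)\,\ell_{\lambda_-}(h_-)\Bigr)d\rho=\frac{\pi^2}{2}|\chi(S)|+\frac12\ell_{\lambda_-}(h_-).
\]

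Finally I would convert $\tfrac12\ell_{\lambda_-}(h_-)$ into $\tfrac14\ell_{\lambda_r}(h)$ using Mess' dictionary between bending and earthquakes recalled in Section \ref{sec:preliminari}: the maps $\pi_l^-$ and $\pi_r^-$ are earthquakes of opposite sense along $\lambda_-$ carrying $(S,h_-)$ to $(S,h)$ and to $(S,h')$; composing them through $(S,h_-)$ and using that the earthquake along a fixed measured lamination is a flow, the right earthquake from $h$ to $h'$ is $E_r^{2\lambda_-}$, i.e. $\lambda_r=2\lambda_-$ transported onto $(S,h)$. Since an earthquake along $\lambda_-$ preserves $\ell_{\lambda_-}$, we get $\ell_{\lambda_-}(h_-)=\ell_{\lambda_-}(h)=\tfrac12\ell_{\lambda_r}(h)$, which gives \eqref{eq complement 1}; equation \eqref{eq complement 2} is obtained identically by flowing from $\partial_+\mathcal{C}(M)$ towards the past, with $\lambda_l=2\lambda_+$. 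I expect the only real difficulty to be the first step — that the $\Sigma_\rho$ remain embedded, foliate $\mathcal{C}(M)\cup M_+$ up to time $\pi/2$, and degenerate onto the zero-area dual lamination — which rests on the local model of the equidistant surface of a spacelike geodesic inside a wedge together with the convexity of $\mathcal{C}(M)$; the area integral and the earthquake bookkeeping are then routine, the only care being the consistency of the left/right and future/past conventions that match $M_+$ with $\lambda_r$ rather than $\lambda_l$.
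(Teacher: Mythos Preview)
Your proposal is correct and is precisely the argument the paper has in mind: the paper does not give its own proof but simply cites \cite[Section 8.2.3]{bebo}, and what you have written is a faithful reconstruction of that computation --- foliation by equidistant surfaces from the pleated boundary, the split of $\Area(\Sigma_\rho)$ into the $\cos^2\rho$ contribution from the flat strata and the $\tfrac12\sin(2\rho)$ contribution from the bending lunes, and the identification $\lambda_r=2\lambda_-$ via Mess' earthquake picture. Your caveat about the left/right and future/past conventions is the only genuinely delicate point, and you have flagged it appropriately.
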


The proof follows from the arguments in \cite[Section 8.2.3]{bebo}.

\begin{repcor}{cor difference lengths}
Given two hyperbolic metrics $h$ and $h'$ on $S$, if $\lambda_l$ and $\lambda_r$ are the measured laminations such that $E_l^{\lambda_l}(h)=h'$ and $E_r^{\lambda_r}(h)=h'$, then
$$|\ell_{\lambda_l}(h)-\ell_{\lambda_r}(h)|\leq 2\pi^2|\chi(S)|~.$$
\end{repcor}
\begin{proof}
From Equations \eqref{eq complement 1} and \eqref{eq complement 2}, it follows that
$$\frac{1}{4}|\ell_{\lambda_l}(h)-\ell_{\lambda_r}(h)|=|\Vol(M_+)-\Vol(M_-)|\leq \max\{ \Vol(M_-),\Vol(M_+)\}\leq \frac{\pi^2}{2}|\chi(S)|~,$$
where the last inequality is the content of Proposition \ref{prop maximal volume}.
\end{proof}

\begin{reptheorem}{prop:comparison volume lams}
Given a maximal globally hyperbolic manifold $M_{h,h'}$, let $\lambda_l$ and $\lambda_r$ be the measured laminations such that $E_l^{\lambda_l}(h)=h'$ and $E_r^{\lambda_r}(h)=h'$. Then
\begin{equation} \label{eq volume convex core 1}
\frac{1}{4}\ell_{\lambda_l}(h)\leq 
\Vol(\mathcal C(M_{h,h'}))\leq \frac{1}{4}\ell_{\lambda_l}(h)+\frac{\pi^2}{2}|\chi(S)|~,
\end{equation}
and analogously
\begin{equation} \label{eq volume convex core 2}
\frac{1}{4}\ell_{\lambda_r}(h)\leq 
\Vol(\mathcal C(M_{h,h'}))\leq \frac{1}{4}\ell_{\lambda_r}(h)+\frac{\pi^2}{2}|\chi(S)|~.
\end{equation}
\end{reptheorem}
\begin{proof}
From Lemma \ref{lemma volume and lams}, we have
$$\Vol(\mathcal C(M))=\frac{1}{4}\ell_{\lambda_l}(h)+\frac{\pi^2}{2}|\chi(S)|-\Vol(M_-)~,$$
and thus the claim follows, using that
$$0\leq \Vol(M_-)\leq \frac{\pi^2}{2}|\chi(S)|$$
 by Proposition \ref{prop maximal volume}. The other inequality holds analogously.
\end{proof}



\section{Holomorphic energy}\label{sec:energiaL1}

In this section we will discuss the relation between the volume of a maximal globally hyperbolic Anti-de Sitter manifold and several types of 1-\emph{energy}, that is, the holomorphic 1-energy obtained by integrating the norm $||\partial f||$ of the $(1,0)$-part of the differential of a diffeomorphism $f$ between Riemannian surfaces, and the integral of the $1$-Schatten norm of the differential of $f$.

\subsection{Volume of a convex set bounded by $K$-surfaces}

As a consequence of Proposition \ref{prop maximal volume}, the volume of the convex core of a maximal globally hyperbolic $\AdS^{3}$ manifold $M_{h,h'}$ is coarsely equivalent to the volume of every domain of $M_{h,h'}$ in which it is contained. Using this fact, we will be able to compare the volume of $M_{h,h'}$ with the minima of certain functionals which depend on $(h,h')\in\Teich(S)\times\Teich(S)$.

As explained in Subsection \ref{subsec min lag maximal surf},
in a maximal globally hyperbolic $\AdS^{3}$ manifold, there exists a unique embedded maximal surface $\Sigma_{0}=\Sigma_{h,h'}$ (i.e with vanishing mean curvature) with principal curvatures in $(-1,1)$. 
By an application of the maximum principle, $\Sigma_{0}$ is contained in the convex core of $M_{h,h'}$. Moreover, using the formulas for the shape operator $B_{\rho}$ of equidistant surfaces (see \cite{Schlenker-Krasnov} or \cite[Lemma 1.14]{seppimaximal}), it is straightforward to verify that a foliation by equidistant surfaces $\Sigma_{\rho}$ from $\Sigma_{0}$ is defined at least for $\rho \in [-\frac{\pi}{4}, \frac{\pi}{4}]$ and the surfaces  $\Sigma_{-\frac{\pi}{4}}$ and $\Sigma_{\frac{\pi}{4}}$ are convex resp. concave, with constant Gaussian curvature $-2$. Therefore, the domain with boundary
\[	
	\Omega_{h,h'}=\bigcup_{\rho \in [-\frac{\pi}{4}, \frac{\pi}{4}]} \Sigma_{\rho}
\]
contains the convex core and by definition
\[
	\Vol(\Omega_{h,h'})=\int_{-\frac{\pi}{4}}^{\frac{\pi}{4}} \Area(\Sigma_{\rho}) d\rho \ .
\]
By exploiting the analytic relation between maximal surfaces in $\AdS^3$-manifolds and minimal Lagrangian diffeomorphisms between hyperbolic surfaces, we can express explicitly this volume as a functional of $h$ and $h'$. 



In fact (recalling Definition \ref{defi min lag}), the minimal Lagrangian map  $m:(S,h)\to(S,h')$ can be characterised in the following way, see \cite{labourieCP}:
\begin{lemma} \label{lemma char min lag} 
Given two hyperbolic metrics $h$ and $h'$ on $S$, an orientation-preserving diffeomorphism $m:(S,h)\to(S,h')$ is minimal Lagrangian if and only if there exists a bundle morphism $b\in\Gamma(\mathrm{End}(TS))$ such that
\begin{enumerate}
	\item $m^{*}h'=h(b\cdot, b\cdot)$ 
	\item $\det(b)=1$
	\item $b$ is $h$-self-adjoint
	\item $b$ satisfies the Codazzi equation $d^{\nabla}b=0$ for the Levi-Civita connection $\nabla$ of $h$.
\end{enumerate}
\end{lemma}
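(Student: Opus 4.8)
The plan is to observe that conditions (1) and (3) are automatically available once $m$ is fixed, that (2) is just the area-preserving condition, and that the real content — equivalence with minimality of the graph — follows from a short computation in which the hypothesis $\det b = 1$ plays a decisive simplifying role.

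First I would recall that, since $m$ is an orientation-preserving diffeomorphism, $m^{*}h'$ is a Riemannian metric, so there is a unique $h$-self-adjoint, positive-definite $a\in\Gamma(\mathrm{End}(TS))$ with $m^{*}h'=h(a\cdot,\cdot)$; setting $b=a^{1/2}$ (the positive symmetric square root, again $h$-self-adjoint and smooth in $a$) yields a $b$ satisfying (1) and (3), and conversely any $b$ satisfying (1) and (3) with positive determinant is, up to sign, of this form. Hence (1) and (3) carry no information. Since $d\mathrm{A}_{m^{*}h'}=\sqrt{\det_{h}a}\;d\mathrm{A}_{h}=(\det_{h}b)\,d\mathrm{A}_{h}$, the map $m$ is area-preserving exactly when $\det b=1$, i.e. exactly when (2) holds. (Here one also notes that the symplectic form on $S\times S$ for which the graph of an orientation- and area-preserving diffeomorphism is Lagrangian is $d\mathrm{A}_{h}\ominus d\mathrm{A}_{h'}$, so the ``Lagrangian'' half of Definition \ref{defi min lag} is precisely (2).) It then remains to prove that, assuming (1)--(3), the graph $\Gamma_{m}\subset(S\times S,h\oplus h')$ is minimal if and only if $d^{\nabla}b=0$.

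For this step the key trick is that the rank-two Cayley--Hamilton identity $b^{2}-(\mathrm{tr}\,b)\,b+(\det b)\,\mathrm{id}=0$, together with $\det b=1$, gives $\mathrm{id}+b^{2}=(\mathrm{tr}\,b)\,b$; hence the metric induced on $\Gamma_{m}$ by the parametrization $x\mapsto(x,m(x))$, which equals $h+m^{*}h'=h((\mathrm{id}+b^{2})\cdot,\cdot)$, is $(\mathrm{tr}\,b)\,h(b\cdot,\cdot)$, that is, conformal to $h_{b}:=h(b\cdot,\cdot)$ with positive conformal factor. Minimality of $\Gamma_{m}$ is equivalent to harmonicity of the isometric inclusion $(\mathrm{id},m)\colon(\Gamma_{m},h+m^{*}h')\to(S,h)\times(S,h')$, hence — harmonicity being a conformal invariant of the domain surface — to harmonicity of both $\mathrm{id}\colon(S,[h_{b}])\to(S,h)$ and $m\colon(S,[h_{b}])\to(S,h')$. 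I would then compute the Hopf differentials of these two maps in a local conformal coordinate for $h_{b}$: one finds that $\mathrm{Hopf}(\mathrm{id})$ is, up to a constant, the $(2,0)$-part of $h$ in that coordinate, that $\mathrm{Hopf}(m)=-\mathrm{Hopf}(\mathrm{id})$, and that holomorphicity of this quadratic differential — which for immersions between surfaces is equivalent to harmonicity — is precisely the Codazzi equation $d^{\nabla}b=0$ written out. This also reproves the harmonic-map description with opposite Hopf differentials recalled before the lemma (from \cite{bon_schl}; see also \cite{labourieCP,Schoenharmonic}).

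I expect this last computation to be the main obstacle: although conceptually clean, it requires care with conventions so that it is exactly $\det b=1$ (not $-1$) and orientation-preservation that make ``mean curvature zero'' and ``$d^{\nabla}b=0$'' coincide on the nose rather than up to a spurious term, and one must verify that holomorphicity of the Hopf differential is genuinely equivalent (not merely necessary) to harmonicity for these maps. As an alternative to the direct computation, one can use the dictionary of Subsection \ref{subsec min lag maximal surf}: every minimal Lagrangian $m$ arises from the unique maximal surface $\Sigma_{0}\subset M_{h,h'}$ with first fundamental form $I$ and shape operator $B$ satisfying $\mathrm{tr}\,B=0$ and (automatically) $d^{\nabla}B=0$, with $h$ and $h'$ equal to $I((\mathrm{id}+B)\cdot,(\mathrm{id}+B)\cdot)$ and $I((\mathrm{id}-B)\cdot,(\mathrm{id}-B)\cdot)$; then $b=(\mathrm{id}+B)^{-1}(\mathrm{id}-B)$ satisfies (1)--(4), because $\mathrm{tr}\,B=0$ gives $\det(\mathrm{id}\pm B)=1+\det B$ and hence $\det b=1$, while the Codazzi equation for $B$ transfers to one for $b$; and for the converse one reconstructs $(I,B)$ from $b$ and invokes the fundamental theorem of surfaces in $\AdS^{3}$. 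I would present the direct argument as the main proof and mention this $\AdS^{3}$ route as a remark.
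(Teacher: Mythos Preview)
The paper does not give a proof of this lemma: it simply states the characterisation and refers the reader to \cite{labourieCP}. So there is no ``paper's own proof'' to compare against, and your proposal is in fact supplying an argument where the authors chose to cite one.

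Your outline is essentially correct and is a clean self-contained route. The reduction of (1)--(3) to ``area-preserving plus a canonical choice of $b$'' is right, and the Cayley--Hamilton trick $\mathrm{id}+b^{2}=(\mathrm{tr}\,b)\,b$ to show that the graph metric is conformal to $h_b=h(b\cdot,\cdot)$ is exactly the step that makes the computation tractable. Your caution about ``holomorphic Hopf differential $\Leftrightarrow$ harmonic'' is well placed but resolves favourably: for a map $f$ between surfaces one has $\bar\partial\,\mathrm{Hopf}(f)=\langle \tau(f),\partial f\rangle$, so when $f$ is an immersion (as both $\mathrm{id}$ and $m$ are here) the equivalence holds. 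The identification of holomorphicity of $h^{(2,0)}$ (in a conformal coordinate for $h_b$) with $d^{\nabla}b=0$ is the genuine computation; it goes through because the $h_b$-traceless part of $h$ is $h(b_0\cdot,\cdot)$ for $b_0$ the $h$-traceless part of $b^{-1}$, and Codazzi for $b$ (together with $\det b=1$) is equivalent to Codazzi for $b^{-1}$, which in turn is the holomorphicity of the associated quadratic differential.

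Your alternative via the maximal surface in $\AdS^{3}$ is actually the argument closest to the paper's toolkit: the formulae $I_{0}=\tfrac14 h((E+b)\cdot,(E+b)\cdot)$ and $B_{0}=-(E+b)^{-1}J_{h}(E-b)$ from Subsection~\ref{subsec min lag maximal surf} already encode the dictionary you describe, and the paper explicitly remarks that $\Sigma_{0}$ is maximal precisely when (1)--(4) hold. If you want your write-up to mesh with the surrounding text, promoting this to the main proof (and demoting the direct harmonic-map computation to a remark) would be the more natural choice.
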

 Moreover, if we denote with $I_{0}$ the induced metric on $\Sigma_{0}$ and with $B_{0}$ its shape operator, and we identify $(\Sigma_0,I_0)$ and $(S,h)$ using the left projection, which is the unique harmonic map in the given isotopy class, the following relations hold (see \cite{Schlenker-Krasnov,bon_schl,BonSepKsurfacesAdS}):
\begin{equation}\label{eq:formulearea}
  I_{0}=\frac{1}{4}h((E+b)\cdot, (E+b)\cdot) \ \  \ \ \ \text{and} \ \ \ \  \ B_{0}=-(E+b)^{-1}J_{h}(E-b)~.
\end{equation}
Here $J_{h}$ is the complex structure on $S$ compatible with the metric $h$. In particular, it can be checked directly that the surface $\Sigma_{0}$ is maximal precisely when conditions $(1)-(4)$ of Lemma \ref{lemma char min lag} hold, that is, when the associated map is minimal Lagrangian. \\
\\
\indent Therefore, by using the above formulas and the fact that the metric on the parallel surface $\Sigma_{\rho}$ at distance $\rho$ from $\Sigma_{0}$ is given by
\[
	I_{\rho}=I_0((\cos(\rho)E+\sin(\rho)B_{0})\cdot, (\cos(\rho)E+\sin(\rho)B_{0})\cdot) \ ,
\]
the area form of $(\Sigma_\rho,I_\rho)$ is 
$$
	d\mathrm{A}_{\Sigma_\rho}=\det(\cos(\rho)E+\sin(\rho)B_{0})d\mathrm{A}_{\Sigma_0}=(\cos^{2}(\rho)+\sin^{2}(\rho)(\det B_0))d\mathrm{A}_{\Sigma_0}~.
$$
Moreover, from Equation \eqref{eq:formulearea}, we have
\[
	d\mathrm{A}_{\Sigma_0}=\frac{1}{4}\det(E+b)d\mathrm{A}_{h} \ \  \ \ \ \text{and} \ \ \ \  \ \det B_0=\frac{\det(E-b)}{\det(E+b)}=\frac{2-\trace b}{2+\trace b}~.
\]
Therefore we get: 
\begin{align*}
	\Area(\Sigma_{\rho})&=\int_{\Sigma_{0}}(\cos^{2}(\rho)+\sin^{2}(\rho)(\det B_0))d\mathrm{A}_{\Sigma_0}\\
		&=\cos^{2}(\rho)\int_{\Sigma_{0}}d\mathrm{A}_{\Sigma_0}+\sin^{2}(\rho)\int_{\Sigma_{0}}\det(B_{0})d\mathrm{A}_{\Sigma_0}\\
		&=\frac{\cos^{2}(\rho)}{4}\int_{\Sigma_{0}}\det(E+b)d\mathrm{A}_{h}+\frac{\sin^{2}(\rho)}{4}\int_{\Sigma_{0}}\det(E-b)d\mathrm{A}_{h}\\
		&=\frac{\cos^{2}(\rho)}{4}\int_{\Sigma_{0}}(2+\trace(b))d\mathrm{A}_{h}+\frac{\sin^{2}(\rho)}{4}\int_{\Sigma_{0}}(2-\trace(b))d\mathrm{A}_{h}\\
		&=\pi|\chi(S)|+\frac{1}{4}(\cos^{2}(\rho)-\sin^{2}(\rho))\int_{\Sigma_{0}}\trace(b)d\mathrm{A}_{h} \ ,
\end{align*}
where in the last step we used the Gauss-Bonnet equation for the hyperbolic metric $h$. 
Integrating for $\rho \in [-\pi/4, \pi/4]$ we have
\begin{equation} \label{eq filo sganga}
	\Vol(\Omega_{h,h'})=\frac{\pi^{2}}{2}|\chi(S)|+\frac{1}{4}\int_{\Sigma_{0}}\trace(b)d\mathrm{A}_{h} \ .
\end{equation}
Recall that from Theorem \ref{thm min lag}, there exists a unique minimal Lagrangian map $m:(S,h)\to(S,h')$ isotopic to the identity between any two hyperbolic surfaces $(S,h)$ and $(S,h')$. Hence we can now prove:

\begin{cor}\label{cor:confrontotracciab}
Let $M_{h,h'}$ be a maximal globally hyperbolic  $\AdS^{3}$ manifold. Let $b:TS\rightarrow TS$ be the unique $h$-self-adjoint Codazzi operator such that $m^*h'=h(b\cdot, b\cdot)$, where $m:(S,h)\to(S,h')$ is the minimal Lagrangian diffeomorphism. Then
\[
	\frac{1}{4}\int_{S}\trace(b)d\mathrm{A}_{h}-{\pi}|\chi(S)| \leq \Vol(\mathcal{C}(M_{h,h'}))\leq \frac{1}{4}\int_{S}\trace(b)d\mathrm{A}_{h}+\frac{\pi^{2}}{2}|\chi(S)| ~.
\]
\end{cor}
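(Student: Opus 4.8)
The plan is to combine Equation \eqref{eq filo sganga} with the estimates on the complement of the convex core already developed in the proof of Proposition \ref{prop maximal volume}. The upper bound is immediate: by construction the domain $\Omega_{h,h'}$ contains the convex core $\mathcal{C}(M_{h,h'})$, so $\Vol(\mathcal{C}(M_{h,h'}))\leq\Vol(\Omega_{h,h'})$, and (identifying $(\Sigma_0,I_0)$ with $(S,h)$ through the left projection, so that $\int_{\Sigma_0}\trace(b)\,d\mathrm{A}_h=\int_S\trace(b)\,d\mathrm{A}_h$) Equation \eqref{eq filo sganga} says the right-hand side is exactly $\frac{\pi^2}{2}|\chi(S)|+\frac14\int_S\trace(b)\,d\mathrm{A}_h$.

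For the lower bound I would write
\[
	\Vol(\mathcal{C}(M_{h,h'}))=\Vol(\Omega_{h,h'})-\Vol(\Omega_{h,h'}\cap M_+)-\Vol(\Omega_{h,h'}\cap M_-)~,
\]
where $M_\pm$ are the two components of $M\setminus\mathcal{C}(M)$, and then bound each of the two correction terms by $\frac{\pi}{2}|\chi(S)|+\frac{\pi^2}{4}|\chi(S)|$. Summing and using \eqref{eq filo sganga} once more yields precisely $\Vol(\mathcal{C}(M_{h,h'}))\geq\frac14\int_S\trace(b)\,d\mathrm{A}_h-\pi|\chi(S)|$, which is the asserted inequality (note that this is sharp, matching the Fuchsian case: there $b=E$, $\trace(b)\equiv 2$, and both sides vanish).

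It remains to estimate $\Vol(\Omega_{h,h'}\cap M_+)$. The key observation is that the boundary component $\Sigma_{\pi/4}$ of $\Omega_{h,h'}$ on the $M_+$-side is a closed surface of constant Gaussian curvature $-2$, hence coincides with the leaf $S_1=F^{-1}(1)$ of the Barbot--Zeghib foliation of $M_+$; consequently $\Omega_{h,h'}\cap M_+$ is exactly the region swept by the surfaces $S_\kappa$ for $\kappa\in(0,1]$. Then Equation \eqref{eq volume kappa}, the pointwise bound $||\grad F||\geq 2\sqrt{\kappa}(\kappa+1)$ proved inside Proposition \ref{prop maximal volume}, and the Gauss--Bonnet identity $\Area(S_\kappa)=2\pi|\chi(S)|/(1+\kappa)$ give
\[
	\Vol(\Omega_{h,h'}\cap M_+)=\int_0^1\int_{S_\kappa}\frac{d\mathrm{Area}_{S_\kappa}}{||\grad F||}\,d\kappa\leq \pi|\chi(S)|\int_0^1\frac{d\kappa}{\sqrt{\kappa}(1+\kappa)^2}=\pi|\chi(S)|\Bigl(\tfrac12+\tfrac{\pi}{4}\Bigr)~,
\]
where the last integral is computed by the substitution $\kappa=t^2$ together with $\int_0^1 dt/(1+t^2)^2=\tfrac14+\tfrac{\pi}{8}$. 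The identical computation applies to $M_-$ using $\Sigma_{-\pi/4}$.

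The only point that requires a little argument is the identification $\Sigma_{\pm\pi/4}=S_1$, i.e. the uniqueness of a closed convex (resp. concave) surface of constant curvature $-2$ inside $M_\mp$; I expect this to be the main (mild) obstacle, and it is handled by exactly the maximum-principle comparison already used in the proof of Proposition \ref{prop maximal volume} — two constant-curvature surfaces with the same curvature must coincide. Everything else is a direct substitution into the formulas \eqref{eq filo sganga}, \eqref{eq volume kappa} and the gradient estimate, so no further difficulty is expected.
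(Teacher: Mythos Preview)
Your argument is correct and follows essentially the same route as the paper: the upper bound is the containment $\mathcal{C}(M_{h,h'})\subset\Omega_{h,h'}$ together with \eqref{eq filo sganga}, and the lower bound comes from bounding $\Vol(\Omega_{h,h'}\setminus\mathcal{C}(M_{h,h'}))$ by integrating the Barbot--Zeghib foliation over $\kappa\in(0,1]$ using the gradient estimate from Proposition~\ref{prop maximal volume} and Gauss--Bonnet, giving exactly $|\chi(S)|(\pi+\pi^2/2)$. The identification $\Sigma_{\pm\pi/4}=S_1$ that you flag is indeed the one nontrivial point, and the paper handles it the same way (implicitly via uniqueness of the constant-curvature leaves).
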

\begin{proof} By the previous computation, we have
\[
	\Vol(\mathcal{C}(M_{h,h'}))\leq \Vol(\Omega_{h,h'})=\frac{\pi^{2}}{2}|\chi(S)|+\frac{1}{4}\int_{S}\trace(b)d\mathrm{A}_{h} \ .
\]
On the other hand, by an adaptation of the proof of Proposition \ref{prop maximal volume}, since the boundary of $\Omega_{h,h'}$ consists of the disjoint union of the two surfaces with constant curvature $-2$ in $M_{h,h'}$, for every $\epsilon>0$, we have
\begin{align*}
\Vol(\Omega_{h,h'}\setminus \mathcal{C}(M_{h,h'}))&\leq 2{(1+\epsilon)}\int_0^{1} \frac{1}{2\sqrt\kappa(\kappa+1)}\int_{S_\kappa}{d\mathrm{Area}_{S_\kappa}}d\kappa \\
&=2\pi|\chi(S)|{(1+\epsilon)}\int_0^{1} \frac{d\kappa}{\sqrt\kappa(\kappa+1)^2}=|\chi(S)|\left(\pi+\frac{\pi^2}{2}\right){(1+\epsilon)}~,
\end{align*}
and therefore
$$\Vol(\Omega_{h,h'}\setminus \mathcal{C}(M_{h,h'}))\leq |\chi(S)|\left(\pi+\frac{\pi^2}{2}\right)~.$$
Hence, using Equation \eqref{eq filo sganga},
\begin{align*}
	\Vol(\mathcal{C}(M_{h,h'}))&=\Vol(\Omega_{h,h'})-\Vol(\Omega_{h,h'}\setminus\mathcal{C}(M_{h,h'})) \\
			&\geq \Vol(\Omega_{h,h'})-|\chi(S)|\left(\pi+\frac{\pi^2}{2}\right)= \frac{1}{4}\int_{S}\trace(b)d\mathrm{A}_{h}-{\pi}|\chi(S)| \ ,
\end{align*}
as claimed.

\end{proof}

\subsection{Holomorphic energy and Schatten energy} \label{subsec hol 1-energy}
As a consequence of Corollary \ref{cor:confrontotracciab}, the coarse properties of the volume of a maximal globally hyperbolic $\AdS^{3}$ manifold depend only on the function
\begin{align*}
	F: \Teich(S)\times \Teich(S) &\rightarrow \R^{+}\\
		(h,h') &\mapsto \int_{S}\trace(b) d\mathrm{A}_{h}~,
\end{align*}
where $b$ is the Codazzi tensor, satisfying the conditions $(1)-(4)$ of Lemma \ref{lemma char min lag} above, for the minimal Lagrangian map $m:(S,h)\to(S,h')$. The properties of $F$ have already been introduced and studied in \cite{bms2}. Here we point out the relation with an $L^{1}$-energy on Teichm\"uller space.\\
\\
Let us denote by $C^1_\mathrm{id}(S)$ the space of $C^1$ maps $f:(S,h)\rightarrow (S,h')$ homotopic to the identity. Equivalently, by identifying $(S,h)$ with $\Hyp^2/\rho(\pi_1(S))$ and $(S,h')$ with $\Hyp^2/\rho'(\pi_1(S))$ (where $\rho,\rho'$ are the holonomy representations of $\pi_1(S)$ into $\isom(\Hyp^2)$), $C^1_\mathrm{id}(S)$ coincides with the space of $(\rho,\rho')$-equivariant $C^1$ maps of $\Hyp^{2}$ into itself.

Let us recall that the $L^1$-energy, or total variation, is defined as:

\begin{defi} \label{defi energial1} Given two hyperbolic surfaces $(S,h)$ and $(S,h')$, the $L^1$-\emph{energy}, or \emph{total variation}, of $f$ is the functional $$E_{d}(\cdot,h,h'):C^1_{\mathrm{id}}(S) \rightarrow \R^{+}$$
defined by
\[
	E_{d}(f,h,h')=\int_{S}||df|| d\mathrm{A}_{h}~,
\]
where, if  $df^{*}$ is the $h$-adjoint operator of $df$, then
$$||df||=\sqrt{\tr(df^*df)}~.$$
\end{defi}

\begin{remark}
We remark that, with the above definition, if $f:(S,h)\to(S,h')$ is an isometry, then at every point $x\in (S,h)$, $||df||=\sqrt 2$. Therefore, the energy of an isometry $f$ between two hyperbolic surfaces is $2\sqrt 2\pi|\chi(S)|$. 
\end{remark}

In this paper, we will consider also another $L^1$-type functional, which is the following $1$-Schatten energy:

\begin{defi} Given two hyperbolic surfaces $(S,h)$ and $(S,h')$, the $1$-\emph{Schatten energy} is the functional $E_{Sch}(\cdot,h,h'):C^1_\mathrm{id}(S) \rightarrow \R^{+}$
\[
	E_{Sch}(f,h,h')=\int_{S}\trace(b_{f}) d\mathrm{A}_{h}
\]
where,  if  $df^{*}$ is the $h$-adjoint operator of $df$, then $b_{f}$ is the square root of $df^{*}df$.  
\end{defi}

\begin{remark} \label{rmk trace and schatten norm}
The quantity $\trace(b_f)$ coincides with the $1$-Schatten norm of the operator $df$ at $x$, and this justifies the definition of $E_{Sch}(f)$ as the $1$-Schatten energy.
At every point $x\in S$, the tensor $b_f$ at $x$ is the unique $h$-self-adjoint operator such that $f^{*}h'=h(b_{f}\cdot, b_{f}\cdot)$. 
\end{remark}

\begin{remark} \label{rmk schatten} The $1$-Schatten energy of a $C^1$ map $f$ is related to the \emph{holomorphic energy} 
\[
	E_{\partial}(f,h,h')=\int_{S}\|\partial f\| d\mathrm{A}_{h}
\]
studied by Trapani and Valli in \cite{oneharmonic}. (Here we are considering $\partial f$ as a holomorphic $1$-form with values in $f^{*}TS$ and we denote with $\| \cdot \|$ the norm on $T^{*}S\otimes f^{*}TS$ induced by the metrics $h$ and $h'$, as in Definition \ref{defi energial1}).
A computation in local coordinates, using Remark {\ref{rmk trace and schatten norm}}, shows that the eigenvalues of $df^{*}df$ are
\[
    \mu_{1}=\frac{1}{2}(\| \partial f\|-\|\overline{\partial}f\|)^{2}  \ \ \ \text{and} \ \ \ \mu_{2}=\frac{1}{2}(\|\partial f\|+\|\overline{\partial}f\|)^{2} \ .
\]
Therefore one obtains:
\begin{equation} \label{eq trace and maximum}
\trace(b_{f})={\sqrt 2}\max\{||\partial f||,||\overline\partial f||\}~.
\end{equation}
In particular, when $f$ is orientation-preserving (for instance if $f$ is a minimal Lagrangian diffeomorphism), then $\|\partial f\|^{2}-\|\overline{\partial}f\|^{2}>0$ and therefore
$$\trace(b_{f})={\sqrt 2}\|\partial f\| \ .$$
In conclusion, this shows that
\begin{equation} \label{eq comparison schatten 1-hol}
E_\partial(f,h,h')\leq\frac{\sqrt 2}{2}E_{Sch}(f,h,h')~,
\end{equation}
with equality when $f$ is an orientation-preserving diffeomorphism.
\end{remark}
Let us denote by $\Diffeo_{0}(S,h,h')$ the space of orientation preserving diffeomorphisms $f:(S,h)\rightarrow (S,h')$ isotopic to the identity. Trapani and Valli proved that the holomorphic 1-energy $E_{\partial}(\cdot,h,h')$ is minimized on $\Diffeo_{0}(S,h,h')$ by the unique minimal Lagrangian map $m:(S,h)\to (S,h')$:

\begin{prop}\cite[Lemma 3.3]{oneharmonic}\label{prop:minlag} Given two hyperbolic metrics $(S,h)$ and $(S,h')$, the functional
$$E_{\partial}(\cdot,h,h'):\Diffeo_{\mathrm{id}}(S,h,h') \rightarrow \R^{+}$$
admits a unique minimum attained by the minimal Lagrangian map $m:(S,h)\to(S,h')$ isotopic to the identity.
\end{prop}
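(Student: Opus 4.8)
The plan is to deduce the proposition from two facts: that the minimal Lagrangian map $m$ is a \emph{critical point} of $E_{\partial}(\cdot,h,h')$ on $\Diffeo_{\mathrm{id}}(S,h,h')$, and that this functional is \emph{convex}; since by Theorem~\ref{thm min lag} there is a unique minimal Lagrangian map isotopic to the identity, these two facts force $m$ to be the unique global minimum. It is convenient to rewrite the integrand: for an orientation-preserving diffeomorphism $f$ one has $\|\partial f\|=\frac{1}{\sqrt 2}\trace(b_{f})$ by \eqref{eq trace and maximum}, where $b_{f}$ is the $h$-self-adjoint positive operator with $f^{*}h'=h(b_{f}\cdot,b_{f}\cdot)$, so that $E_{\partial}(f,h,h')=\frac{1}{\sqrt 2}\int_{S}\trace(b_{f})\,d\mathrm{A}_{h}=\frac{1}{\sqrt 2}E_{Sch}(f,h,h')$; moreover $\|\partial f\|=\sqrt{\frac12\|df\|^{2}+J_{f}}\geq\sqrt{2\,J_{f}}>0$, writing $J_{f}$ for the pointwise Jacobian, so the integrand is smooth and never degenerates along the space of orientation-preserving diffeomorphisms.

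First I would compute the first variation of $E_{\partial}$ along a deformation $f_{t}$ of $f$ with variation field $V$ a section of $f^{*}TS$, using the pulled-back Levi-Civita connection of $(S,h')$ to differentiate $\partial f_{t}$; after an integration by parts the stationarity condition — the ``one-harmonic map equation'' — asserts that the section $\partial f/\|\partial f\|$ of $T^{*}S\otimes f^{*}TS$ is divergence-free. The claim is that, when translated into the tensorial language of $b=b_{f}$, the full set of Euler--Lagrange equations obtained by letting $f$ range over all of $\Diffeo_{\mathrm{id}}(S,h,h')$ (in particular by precomposing with area-changing diffeomorphisms of $(S,h)$) is equivalent to the conjunction of the Codazzi equation $d^{\nabla}b=0$ and $\det b\equiv 1$; together with the $h$-self-adjointness of $b$, which is automatic, these are exactly conditions $(1)$--$(4)$ of Lemma~\ref{lemma char min lag}. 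Hence the critical points of $E_{\partial}$ among orientation-preserving diffeomorphisms are precisely the minimal Lagrangian maps, and in particular, running this computation in reverse, $m$ is a critical point.

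Finally I would establish convexity of $E_{\partial}$ on $\Diffeo_{\mathrm{id}}(S,h,h')$ along the geodesic homotopy $f_{t}$ joining two orientation-preserving diffeomorphisms, built using geodesics of the nonpositively curved target $(S,h')$: the relevant input is that the $\mathrm{CAT}(0)$ geometry of $(S,h')$ makes $t\mapsto E_{\partial}(f_{t})$ convex, strictly so unless the endpoints differ by an isometry of $(S,h')$. Granting this, a critical point of $E_{\partial}$ is a global minimum, and uniqueness follows either from strict convexity or directly from Theorem~\ref{thm min lag} (any minimum is a critical point, hence minimal Lagrangian, hence equal to $m$). I expect the main obstacle to be precisely this convexity statement: the clean convexity of the $L^{2}$ Dirichlet energy does not transfer verbatim to the $L^{1}$ holomorphic energy because of the Jacobian term and the square root in $\|\partial f\|=\sqrt{\frac12\|df\|^{2}+J_{f}}$, so one must argue more carefully, controlling the geometry by the uniform estimates coming from the hyperbolicity of both $h$ and $h'$. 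Alternatively, one may replace the convexity step by a direct-method argument: these same estimates make $E_{\partial}$ proper on $\Diffeo_{\mathrm{id}}(S,h,h')$, so its infimum is attained, necessarily at a critical point, which by the previous paragraph is the unique minimal Lagrangian map $m$.
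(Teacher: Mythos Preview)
The paper does not supply its own proof of this proposition: it is quoted from Trapani--Valli \cite[Lemma~3.3]{oneharmonic} and used as a black box, so there is nothing in the paper to compare your argument against. Your sketch therefore goes well beyond what the present paper does.

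That said, the strategy you outline --- verify that $m$ is a critical point of $E_{\partial}$ and then invoke convexity to upgrade ``critical'' to ``global minimum'' --- is sound. Two points are worth tightening. First, for the logic of the argument you only need the implication ``minimal Lagrangian $\Rightarrow$ critical'', which is a straightforward first-variation computation; your stronger claim that the Euler--Lagrange system is \emph{equivalent} to Codazzi plus $\det b=1$ is true in this setting but genuinely uses that both metrics have the same constant curvature, and the mechanism is not ``precomposing with area-changing diffeomorphisms'' but rather the Gauss equation relating the curvatures of $h$ and $f^{*}h'$. Second, $(S,h')$ is closed and hence not itself $\mathrm{CAT}(0)$; the geodesic homotopy must be constructed equivariantly in the universal cover $\Hyp^{2}$. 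You correctly identify the convexity step as the substantive one; the paper itself alludes to this when, for the closely related Proposition~\ref{prop:minlag sch}, it defers to convexity of $E_{Sch}$ from \cite{bonschlpreparation} rather than proving it.
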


We will actually need the fact that the minimal Lagrangian map $m:(S,h)\to(S,h')$ also minimizes $E_{Sch}$ on $C^1_\mathrm{id}(S)$, which is an improvement of Proposition \ref{prop:minlag}:

\begin{prop}\label{prop:minlag sch} Given two hyperbolic metrics $(S,h)$ and $(S,h')$, the functional
$$E_{Sch}(\cdot,h,h'):C^1_\mathrm{id}(S) \rightarrow \R^{+}$$
admits a minimum attained by the minimal Lagrangian map $m:(S,h)\to(S,h')$ isotopic to the identity.
\end{prop}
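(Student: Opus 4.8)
The plan is to upgrade Proposition~\ref{prop:minlag} from the space $\Diffeo_{\mathrm{id}}(S,h,h')$ of orientation-preserving diffeomorphisms to the whole space $C^1_\mathrm{id}(S)$. The strategy rests on two observations. First, by Equation~\eqref{eq trace and maximum}, for a general $f\in C^1_\mathrm{id}(S)$ we have $\trace(b_f)=\sqrt2\max\{\|\partial f\|,\|\overline\partial f\|\}$, so $E_{Sch}(f,h,h')\geq\sqrt2\int_S\|\partial f\|\,d\mathrm A_h=2E_\partial(f,h,h')$ as well as $E_{Sch}(f,h,h')\geq 2E_{\overline\partial}(f,h,h')$, where $E_{\overline\partial}(f,h,h')=\int_S\|\overline\partial f\|\,d\mathrm A_h$; and when $f$ is orientation-preserving we have equality $E_{Sch}(f,h,h')=2E_\partial(f,h,h')$ by Remark~\ref{rmk schatten}. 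In particular, for the minimal Lagrangian map $m$ (which is an orientation-preserving diffeomorphism) we get $E_{Sch}(m,h,h')=2E_\partial(m,h,h')$. So it suffices to show that $E_{Sch}(f,h,h')\geq 2E_\partial(m,h,h')$ for \emph{every} $f\in C^1_\mathrm{id}(S)$, not just orientation-preserving diffeomorphisms.

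The key step I would carry out first is a degree/homotopy argument showing that $E_\partial(f,h,h')\geq E_\partial(m,h,h')$ for every $f\in C^1_\mathrm{id}(S)$, even allowing non-injective or orientation-reversing behavior. The point is that $\int_S(\|\partial f\|^2-\|\overline\partial f\|^2)\,d\mathrm A_h$ is (up to a universal constant) the algebraic area of $f$, hence a homotopy invariant: since $f$ is homotopic to the identity, $\int_S(\|\partial f\|^2-\|\overline\partial f\|^2)\,d\mathrm A_h$ equals the total $h'$-area of $S$, which is $2\pi|\chi(S)|$. Therefore $\int_S\|\partial f\|^2\,d\mathrm A_h = \int_S\|\overline\partial f\|^2\,d\mathrm A_h + 2\pi|\chi(S)| \geq 2\pi|\chi(S)|$. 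Combined with $\|\partial f\|\geq\|\overline\partial f\|$ pointwise wherever the Jacobian is nonnegative — and the above $L^2$ identity forcing the "positive part" to dominate globally in the appropriate averaged sense — one concludes that replacing the orientation-reversing regions can only increase $\int_S\|\partial f\|$. More carefully: one shows $\int_S\|\partial f\|\,d\mathrm A_h\geq\int_S\|\partial\tilde f\|\,d\mathrm A_h$ for a suitable orientation-preserving competitor $\tilde f$ built from $f$, after which Proposition~\ref{prop:minlag} applies to $\tilde f$; alternatively, one invokes directly the variational characterization of $E_\partial$ and notes that the Euler–Lagrange computation underlying Trapani–Valli's argument, together with convexity of $E_{Sch}$ as a functional on $C^1_\mathrm{id}(S)$ (pointwise, $A\mapsto\trace\sqrt{A^*A}$ is the operator $1$-Schatten norm, which is convex), pins the minimum at a critical point, and $m$ is a critical point of $E_{Sch}$ because it is one of $E_\partial$ and the two agree to first order at $m$ within $C^1_\mathrm{id}(S)$.

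Concretely, the cleanest route I would take is the convexity argument, since the paper already flags it: the functional $E_{Sch}(\cdot,h,h')$ is convex on $C^1_\mathrm{id}(S)$ because for each $x$ the map $df_x\mapsto\trace\sqrt{df_x^*df_x}$ is the $1$-Schatten (trace) norm of the linear map $df_x$, and norms are convex; affine interpolation $f_t=(1-t)f_0+tf_1$ of $C^1$ maps (done in the universal cover using equivariance, so that $f_t$ remains $(\rho,\rho')$-equivariant) satisfies $d(f_t)_x=(1-t)d(f_0)_x+t\,d(f_1)_x$, whence $E_{Sch}$ is convex along these paths. A convex functional on a convex set attains its minimum at any critical point. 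The minimal Lagrangian map $m$ is a critical point: by Equation~\eqref{eq comparison schatten 1-hol} we have $E_{Sch}\geq 2E_\partial$ on all of $C^1_\mathrm{id}(S)$ with equality at $m$, so $m$ minimizes $2E_\partial$ over orientation-preserving diffeomorphisms by Proposition~\ref{prop:minlag}, and since $E_{Sch}(m)=2E_\partial(m)$ while $E_{Sch}\geq 2E_\partial$ everywhere, $m$ automatically minimizes $E_{Sch}$ over all of $C^1_\mathrm{id}(S)$ \emph{provided} we know $2E_\partial(m,h,h')\leq\inf_{C^1_\mathrm{id}(S)}E_{Sch}$. That last inequality is exactly the content of the $L^2$-area identity of the previous paragraph: for any $f\in C^1_\mathrm{id}(S)$, $\inf E_{Sch}(f)\geq \sqrt2\,\|\partial f\|_{L^1}\geq\sqrt2\,\|\overline\partial f\|_{L^1}$, and one bounds $E_\partial(m)$ from above by $E_\partial(f)$ using that $m$ minimizes $E_\partial$ among orientation-preserving diffeomorphisms together with a smoothing/approximation of an arbitrary $f$ by diffeomorphisms in its homotopy class (or directly via the homotopy invariance of the signed area).

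The main obstacle I anticipate is precisely this last point: extending the Trapani–Valli minimality of $m$ from $\Diffeo_{\mathrm{id}}(S,h,h')$ to general $C^1$ maps homotopic to the identity, which are allowed to collapse, fold, or reverse orientation. Diffeomorphisms are not dense in $C^1_\mathrm{id}(S)$ in the $C^1$ topology (degree-one maps that are not diffeomorphisms cannot be $C^1$-approximated by diffeomorphisms), so a naive density argument fails; one must instead argue via the homotopy-invariant identity $\int_S(\|\partial f\|^2-\|\overline\partial f\|^2)\,d\mathrm A_h=2\pi|\chi(S)|$ and a pointwise/rearrangement estimate showing that $\trace(b_f)=\sqrt2\max\{\|\partial f\|,\|\overline\partial f\|\}\geq\sqrt2\,\|\partial f\|$ never decreases when one passes to the orientation-preserving "hull" of $f$. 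I expect the remaining steps — convexity of the $1$-Schatten integrand, criticality of $m$, and the comparison $E_{Sch}\geq 2E_\partial$ with equality for orientation-preserving maps — to be routine given the material already assembled in the excerpt.
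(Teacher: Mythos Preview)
Your convexity instinct is correct and is exactly what the paper uses, but you have overcomplicated the passage from $\Diffeo_{\mathrm{id}}$ to $C^1_{\mathrm{id}}$ and in doing so left a genuine gap. The paper's argument is much shorter: since $\Diffeo_{\mathrm{id}}(S,h,h')$ is \emph{open} in $C^\infty_{\mathrm{id}}(S)$, the Trapani--Valli minimum $m$ of $E_\partial$ on $\Diffeo_{\mathrm{id}}$ (equivalently of $E_{Sch}$, by Remark~\ref{rmk schatten}) is automatically a \emph{local} minimum of $E_{Sch}$ on $C^\infty_{\mathrm{id}}(S)$. Convexity of $E_{Sch}$ then upgrades this local minimum to a global one on $C^\infty_{\mathrm{id}}(S)$, and finally density of $C^\infty_{\mathrm{id}}(S)$ in $C^1_{\mathrm{id}}(S)$ together with continuity of $E_{Sch}$ finishes the job.

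Your proposal misses the openness-of-diffeomorphisms step and tries instead to prove directly that $E_\partial(f)\geq E_\partial(m)$ for arbitrary $f\in C^1_{\mathrm{id}}(S)$, which you correctly flag as the ``main obstacle''. The degree/area identity you invoke only controls $\|\partial f\|^2-\|\overline\partial f\|^2$ in $L^1$, not $\|\partial f\|$ itself, so the ``orientation-reversing hull'' argument is not carried out; and your fallback sentence ``$m$ automatically minimizes $E_{Sch}$ over all of $C^1_{\mathrm{id}}(S)$ \emph{provided} we know $2E_\partial(m,h,h')\leq\inf_{C^1_{\mathrm{id}}(S)}E_{Sch}$'' is precisely the statement to be proved. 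You also worry that diffeomorphisms are not $C^1$-dense in $C^1_{\mathrm{id}}(S)$, but the paper never needs this: it only uses that \emph{smooth maps} are dense in $C^1$ maps, which is standard. (Minor: your constant $2$ in $E_{Sch}\geq 2E_\partial$ should be $\sqrt 2$; compare Equation~\eqref{eq comparison schatten 1-hol}.)
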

The proof follows from the convexity of the functional $E_{Sch}$, see \cite{bonschlpreparation}. In fact, 
the space $\Diffeo_{\mathrm{id}}(S,h,h')$ of diffeomorphisms isotopic to the identity is open in $C^\infty_\mathrm{id}(S)$ (i.e. the space of $C^\infty$ self maps of $S$ homotopic to the identity). Moreover, by Remark \ref{rmk schatten}, $E_{Sch}$ and $E_\partial$ coincide on $\Diffeo_{\mathrm{id}}(S,h,h')$, up to a factor. By Proposition \ref{prop:minlag}, $m$ is a local minimum of $E_{Sch}$ on $C^\infty_\mathrm{id}(S)$, and thus a global minimum on $C^{\infty}_{\mathrm{id}}(S)$ by convexity. By density of $C^\infty_\mathrm{id}(S)$ in $C^1_\mathrm{id}(S)$, and the continuity of $E_{Sch}$ on $C^1_\mathrm{id}(S)$, it follows that $m$ is a global minimum of $E_{Sch}(\cdot,h,h')$ on $C^1_\mathrm{id}(S)$, as well.

The above results enable us to conclude the following theorem:

\begin{reptheorem}{theorem hol energy}
Let $M_{h,h'}$ be a maximal globally hyperbolic $\AdS^3$ manifold. Then
$$\frac{1}{4}E_{Sch}(m,h,h')-\pi|\chi(S)| \leq \Vol(\mathcal{C}(M_{h,h'}))\leq \frac{1}{4}E_{Sch}(m,h,h')+\frac{\pi^{2}}{2}|\chi(S)|~,$$
where $m:(S,h)\to (S,h')$ is the minimal Lagrangian map isotopic to the identity, that is, the minimum of the 1-Schatten energy functional $E_{Sch}(\cdot,h,h'):C^1_{\mathrm{id}}(S)\to\R$. 
\end{reptheorem}
\begin{proof}
By Proposition \ref{prop:minlag sch}, the minimum of $E_{Sch}(\cdot,h,h')$ is achieved at the minimal Lagrangian map $m$, and we have by definition
$$E_{Sch}(m,h,h')=\int_S \trace(b)d\mathrm A_h~.$$
Hence the statement follows from Corollary \ref{cor:confrontotracciab}.
\end{proof}

\section{$L^1$-energy between hyperbolic surfaces} \label{sec main thm}
We conclude this section by showing that the volume of maximal globally hyperbolic $\AdS^3$ manifolds is also coarsely comparable to the $L^1$-energy on Teichm\"uller space, which is the content of Theorem \ref{cor energiaL1}. Recall that the definition of $L^1$-energy functional was given in Definition \ref{defi energial1}.

For the proof of the inequality 
$$\frac{1}{4}\inf_{f\in C^1_\mathrm{id}(S)}E_d(f,h,h') -\frac{\sqrt{2}}{2}\pi|\chi(S)|\leq \Vol(\mathcal{C}(M_{h,h'}))$$
of Theorem \ref{cor energiaL1}, we will need the fact that
$$\inf_{f\in C^1_\mathrm{id}(S)}E_d(f,h,h')\leq \ell_{\lambda}(h)+2\sqrt{2}\pi|\chi(S)|~,$$
so as to apply Theorem \ref{prop:comparison volume lams}. This follows from the existence of a sequence $f_n\in C^1_{\mathrm{id}}(S)$ (actually the sequence is in $C^\infty_{\mathrm{id}}(S)$), converging to the earthquake map $e_\lambda:(S,h)\to(S,h')$, such that 
$\lim E_d(f_n,h,h')\leq \ell_{\lambda}(h)+2\sqrt{2}\pi|\chi(S)|$.

\begin{lemma} \label{lemma variation earthquake}
Given two hyperbolic surfaces $(S,h)$ and $(S,h')$, let $\lambda$ be the measured  lamination such that $E_l^{\lambda}(h)=h'$ (or $E_r^{\lambda}(h)=h'$). Then there exists a sequence $f_n\in C^1_{\mathrm{id}}(S)$ such that
$$\lim_{n\to +\infty} E_d(f_n,h,h')\leq \ell_{\lambda}(h)+2\sqrt{2}\pi|\chi(S)|~.$$
\end{lemma}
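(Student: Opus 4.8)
The plan is to prove the statement first when $\lambda$ is a weighted multicurve, by writing down an explicit smoothing of the earthquake map, and then to pass to a general measured lamination by approximation. Throughout I treat the left earthquake; the right case is identical.

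\textbf{Multicurve case.} Suppose $\lambda=\sum_{i=1}^N a_i c_i$ with the $c_i$ disjoint simple closed $h$-geodesics, so that $\ell_\lambda(h)=\sum_i a_i\ell_i$ with $\ell_i=\mathrm{length}_h(c_i)$. Recall that the earthquake map $e_\lambda\colon(S,h)\to(S,h')$ is a local isometry off $\bigcup_i c_i$, and that crossing $c_i$ it is modified by the translation of length $a_i$ along $c_i$. For each $i$ I would fix Fermi coordinates $(s,t)$, $t\in\R/\ell_i\Z$, on a collar of $c_i$, in which $h=ds^2+\cosh^2 s\,dt^2$, $c_i=\{s=0\}$, and (after a suitable choice of representative) $e_\lambda$ reads $(s,t)\mapsto(s,t)$ for $s\le -\delta_n$ and $(s,t)\mapsto(s,t+a_i)$ for $s\ge\delta_n$. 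I then set $f_n$ equal to $e_\lambda$ outside the collars of half-width $\delta_n=1/n$, and inside the collar of $c_i$ equal to $(s,t)\mapsto(s,t+a_i\rho_n(s))$, where $\rho_n\colon[-\delta_n,\delta_n]\to[0,1]$ is smooth, $\equiv0$ to infinite order near $-\delta_n$ and $\equiv1$ to infinite order near $\delta_n$; this makes $f_n$ a well-defined map in $C^\infty_{\mathrm{id}}(S)$ (homotopic to $e_\lambda$, hence to the identity) provided the collars are embedded and disjoint, which holds for $n$ large. A short computation in the $h$-orthonormal frame $\{\partial_s,\cosh^{-1}\!s\,\partial_t\}$ gives $\|df_n\|=\sqrt{2+a_i^2\rho_n'(s)^2\cosh^2 s}$ on the collar of $c_i$, and using $\sqrt{2+X}\le\sqrt2+\sqrt X$, $d\mathrm A_h=\cosh s\,ds\,dt$, and $\int\rho_n'=1$ one gets $\int_{\mathrm{collar}_i}\|df_n\|\,d\mathrm A_h\le\sqrt2\,\mathrm A_h(\mathrm{collar}_i)+a_i\ell_i\cosh^2(1/n)$. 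Since $\|de_\lambda\|\equiv\sqrt2$ off the curves and $\mathrm A_h(S)=2\pi|\chi(S)|$, summing over $i$ yields $E_d(f_n,h,h')\le 2\sqrt2\,\pi|\chi(S)|+\cosh^2(1/n)\,\ell_\lambda(h)$, hence $\limsup_n E_d(f_n,h,h')\le\ell_\lambda(h)+2\sqrt2\,\pi|\chi(S)|$, as desired.

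\textbf{General case.} Here I would choose weighted multicurves $\lambda_k\to\lambda$ in $\mathcal{ML}(S)$. By continuity of the length function $\ell_{\lambda_k}(h)\to\ell_\lambda(h)$, and by continuity of the earthquake deformation the metrics $h'_k:=E^{\lambda_k}_l(h)$ may be chosen as honest metrics on $S$ converging in $C^\infty$ to $h'$. The multicurve case supplies $g_{k,n}\in C^\infty_{\mathrm{id}}(S)$, $g_{k,n}\colon(S,h)\to(S,h'_k)$, with $\limsup_n E_d(g_{k,n},h,h'_k)\le\ell_{\lambda_k}(h)+2\sqrt2\,\pi|\chi(S)|$. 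Viewing $g_{k,n}$ as a map into $(S,h')$ and using $\|v\|_{h'}\le C_k\|v\|_{h'_k}$ with $C_k:=\sup_S(h'/h'_k)^{1/2}\to1$, one obtains $\limsup_n E_d(g_{k,n},h,h')\le C_k\big(\ell_{\lambda_k}(h)+2\sqrt2\,\pi|\chi(S)|\big)$, whose right-hand side tends to $\ell_\lambda(h)+2\sqrt2\,\pi|\chi(S)|$ as $k\to\infty$. A diagonal extraction $f_n:=g_{k_n,m_n}$ then gives the required sequence (which, along the way, converges to $e_\lambda$).

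\textbf{Main obstacle.} The genuinely technical point is the multicurve construction: ensuring $f_n$ glues to a globally $C^\infty$ map homotopic to the identity (i.e.\ $\rho_n$ matching the earthquake isometry to infinite order at the collar boundary), and that the estimate localizes correctly to the collars. The only non-elementary external input is the continuity of earthquakes in $\lambda$ (at the level of metrics, not merely of Teichm\"uller classes), which is what makes the reduction to multicurves legitimate; everything else is a direct computation.
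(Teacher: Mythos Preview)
Your proposal is correct and follows essentially the same approach as the paper: an explicit smoothing of the earthquake map in Fermi collars around the curves of a weighted multicurve, followed by an approximation of a general lamination by multicurves using continuity of the earthquake map (the paper phrases the latter step via $(1+1/n)$-bi-Lipschitz diffeomorphisms $(S,h_n')\to(S,h')$ rather than your $C_k\to 1$, but the content is the same). Your computation is in fact slightly more careful than the paper's in that you retain the factor $\cosh^2 s$ in $\|df_n\|$ and spell out the infinite-order matching of $\rho_n$ at the collar boundary.
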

\begin{proof}
We will give the proof for left earthquakes. 
Suppose first that $\lambda$ is a weighted simple closed geodesic $(\gamma,w)$. Let $U_\epsilon$ be the $\epsilon$-neighborhood of $\gamma$ on $(S,h)$. Choose coordinates $(t,r)$ on $U_\epsilon$, so that the geodesic $\gamma$ is parameterized by arclength by the coordinate $(t,0)$, {for $t \in [0,L]$}, and the point $(t,r)$ is at signed distance $r$ from the point $(t,0)$. Hence the metric on $U_\epsilon$ has the form $dr^2+\cosh^2 (r)dt^2$.

 Then define $f_\epsilon(r,t)=(r,t+g_\epsilon(r))$ on $U_\epsilon$, where $g_\epsilon(r)$ is a smooth increasing map such that $g_\epsilon(-\epsilon)=0$ and $g_\epsilon(\epsilon)=w$. By definition of earthquake map, we can then extend $f_\epsilon$ to be an isometry on $S\setminus U_\epsilon$.  By a direct computation, 
$$||df_{\epsilon(r,t)}||=\sqrt{2+g_\epsilon'(r)^2}~,$$
hence 
\begin{align*}
\int_{U_\epsilon} ||df_\epsilon||d\mathrm A_h&=L\int_{-\epsilon}^{\epsilon} \sqrt{2+g_\epsilon'(r)^2}\cosh (r)dr\leq L\int_{-\epsilon}^{\epsilon}(\sqrt 2+g_\epsilon'(r))\cosh (r) dr \\
&\leq \sqrt 2 \mathrm{Area}(U_\epsilon)+L\cosh(\epsilon)(g(\epsilon)-g(-\epsilon))~.
\end{align*}
Therefore, using that $L(g(\epsilon)-g(-\epsilon))=Lw=\ell_\lambda(h)$ and that $f_\epsilon$ is an isometry outside of $U_\epsilon$, we get
\begin{align*}
\int_S||df_\epsilon||d\mathrm A_h&=\int_{S\setminus U_\epsilon}||df_\epsilon||d\mathrm A_h+\int_{U_\epsilon}||df_\epsilon||d\mathrm A_h \\
&\leq \cosh(\epsilon)\ell_\lambda(h)+\sqrt 2\mathrm{Area}(S\setminus U_\epsilon)+\sqrt 2\mathrm{Area}(U_\epsilon) \\
&=\cosh(\epsilon)\ell_\lambda(h)+\sqrt 2\mathrm{Area}(S)=\cosh(\epsilon)\ell_{\lambda}(h)+2\sqrt{2}\pi|\chi(S)|~.
\end{align*}
As we let $\epsilon\to 0$, this concludes that
$$\lim_{\epsilon\to 0}\int_S||df_\epsilon||d\mathrm A_h\leq \ell_{\lambda}(h)+2\sqrt{2}\pi|\chi(S)|~.$$
Taking a sequence with $\epsilon=1/n$, this concludes the proof of the statement in the case $\lambda$ is a weighted simple closed curve.

Let us now take an arbitrary measured geodesic lamination $\lambda$. Let $\lambda_n$ be a sequence of weighted multicurves converging to $\lambda$, so that:
\begin{itemize}
\item $|\ell_{\lambda_n}(h)-\ell_\lambda(h)|\leq 1/n$.
\item The metrics $h_n=E_l^{\lambda_n}(h)$ and $h'=E_l^{\lambda}(h)$ are $(1+1/n)$-bi-Lipschitz.
\end{itemize}
In fact, the second step follows from the continuity of the earthquake map $E_l:\mathcal {ML}(S)\times\Teich(S)\to\Teich(S)$. Let us now take $f_n:(S,h)\to(S,h_n)$ (constructed as before) so that 
$$\int_S ||df_n||d\mathrm A_h\leq \ell_{\lambda_n}(h)+2\sqrt{2}\pi|\chi(S)|+\frac{1}{n}~.$$
Let $g_n:(S,h_n)\to(S,h')$ be the $(1+1/n)$-bi-Lipschitz diffeomorphisms. Since $h_n\to h'$, we can assume $g_n\to\mathrm{id}$. Then for the map $g_n\circ f_n:(S,h)\to(S,h')$, we have:
\begin{align*}
\int_S ||d(g_n\circ f_n)||d\mathrm A_h&\leq\left(1+\frac{1}{n}\right) \int_S ||d f_n||d\mathrm A_h \\
&\leq \left(1+\frac{1}{n}\right)\left(\ell_{\lambda_n}(h)+2\sqrt{2}\pi|\chi(S)|+\frac{1}{n}\right) \\
&\leq \left(1+\frac{1}{n}\right)\left(\ell_{\lambda}(h)+2\sqrt{2}\pi|\chi(S)|+\frac{2}{n}\right)~.
\end{align*}
Hence the constructed sequence $g_n\circ f_n:(S,h)\to(S,h')$ converges to the earthquake map $e_\lambda$  and satisfies: 
$$\lim_{n\to+\infty} \int_S ||d(g_n\circ f_n)||d\mathrm A_h\leq  \ell_{\lambda}(h)+2\sqrt{2}\pi|\chi(S)|~,$$
hence concluding the proof.
\end{proof}

\begin{remark}
Although not strictly necessary in this paper, we remark that $E_d$ can be extended to a lower-semicontinuous functional on the space of $L^{2}$ maps from $S$ to itself
$$V(\cdot,h,h'):L^2(S,S)\to\R\cup\{+\infty\}~,$$
which is defined by
$$V(f,h,h')=\liminf_{g\to f} E_d(g,h,h')~.$$
The functional $V(\cdot,h,h')$ is called \emph{total variation}, and the maps where it takes finite values are \emph{bounded variation maps}. 
Lemma \ref{lemma variation earthquake} shows the inequality $V(e_{\lambda},h,h')\leq \ell_{\lambda}(h)+2\sqrt{2}\pi|\chi(S)|$. 
\end{remark}

We are now able to prove the main result connecting the volume of maximal globally hyperbolic $\AdS^3$ manifolds with the minima of the $L^1$-energy:  
\begin{reptheorem}{cor energiaL1}
Let $M_{h,h'}$ be a maximal globally hyperbolic  $\AdS^{3}$ manifold. Then
$$\frac{1}{4}\inf_{f\in C^1_\mathrm{id}(S)}E_d(\cdot,h,h') -\frac{\sqrt{2}}{2}\pi|\chi(S)|\leq \Vol(\mathcal{C}(M_{h,h'}))\leq \frac{\sqrt{2}}{2}\inf_{f\in C^1_\mathrm{id}(S)} E_d(\cdot,h,h')+\frac{\pi^{2}}{2}|\chi(S)|~.$$
\end{reptheorem}
\begin{proof}
From Lemma \ref{lemma variation earthquake} and Theorem \ref{prop:comparison volume lams}, we have $$\frac{1}{4}\inf_{f\in C^1_{\mathrm{id}}(S)}E_d(\cdot,h,h')\leq \frac{1}{4}(\ell_\lambda(h)+2\sqrt{2}\pi|\chi(S)|)\leq \Vol(\mathcal{C}(M_{h,h'}))+\frac{\sqrt{2}}{2}\pi|\chi(S)|~,$$
hence the lower bound follows. On the other hand, using the fact that 
$$||df||^2=||\partial f||^2+||\overline \partial f||^2~,$$
 from Equation \eqref{eq trace and maximum} 
we have for every $f\in C^1_{\mathrm{id}}(S)$: 
$$\trace(b_{f})={\sqrt 2}\max\{||\partial f||,||\overline\partial f||\}\leq \sqrt 2 ||df||~.$$
Thus
$$E_{Sch}(f,h,h')=\int_S \trace(b_{f})dA_h\leq \sqrt 2\int_S ||df||dA_h~.$$
Hence the upper bound follows from Theorem \ref{theorem hol energy} and Proposition \ref{prop:minlag sch}:
\begin{align*}
\Vol(\mathcal{C}(M_{h,h'})) &\leq \frac{\pi^{2}}{2}|\chi(S)|+\frac{1}{4}E_{Sch}(m,h,h') \\
&=\frac{\pi^{2}}{2}|\chi(S)|+\frac{1}{4}\inf_{f\in C^1_\mathrm{id}(S)} E_{Sch}(f,h,h') \\ 
&\leq \frac{\pi^{2}}{2}|\chi(S)|+\frac{\sqrt 2}{4}\inf_{f\in C^1_\mathrm{id}(S)} E_d(f,h,h')~,
\end{align*}
thus concluding the proof.
\end{proof}

\section{Thurston's asymmetric distance}\label{sec:distanzaThurston}

In this Section, we will apply Corollary \ref{cor:confrontotracciab} to compare the volume of the convex core of a maximal globally hyperbolic $\AdS^{3}$-manifold and Thurston's asymmetric distance on Teichm\"uller space.

\subsection{The general upper bound}
 Thurston asymmetric distance on Teichm\"uller space is deeply related to the hyperbolic geometry of surfaces. We briefly recall here the main definitions for the convenience of the reader.\\
\\
\indent Let $h$ and $h'$ two hyperbolic metrics on $S$. Given a diffeomorphism isotopic to the identity $f:(S,h) \rightarrow (S,h')$ we define the Lipschitz constant of $f$ as
\[
	L(f)=\sup_{x \neq y \in S}\frac{d_{h'}(f(x), f(y))}{d_{h}(x,y)} \ .
\]

\begin{defi}Thurston asymmetric distance between $h,h'\in \Teich(S)$ is 
\[
	\dth(h,h')=\inf_{f\in\mathrm{Diff}_{\mathrm{id}}}\log(L(f))
\]
where the infimum is taken over all diffeomorphisms $f:(S,h)\rightarrow (S,h')$ isotopic to the identity.
\end{defi}

Thurston showed that the Lipschitz constant $L(f)$ can also be computed by comparing lengths of closed geodesics for the metrics $h$ and $h'$. More precisely, in \cite{Thurstondistance} he proved that
\begin{equation} \label{eq thuston lipschitz}
	L(f)=\sup_{c}\frac{\ell_{c}(h')}{\ell_{c}(h)}  \ ,
\end{equation}
where $c$ varies over all simple closed curves $c$ in $S$.

An application of Theorem \ref{theorem hol energy} leads to the following comparison between the volume of the convex core of a maximal globally hyperbolic $\AdS^{3}$ manifold and Thurston asymmetric distance.

\begin{reptheorem}{cor:confrontovolumeThurston} Let $M_{h,h'}$ be a maximal globally hyperbolic $\AdS^{3}$ manifold. Let $M_{h,h'}$ be a maximal globally hyperbolic $\AdS^{3}$ manifold. Then
\[
	\Vol(\mathcal{C}(M_{h,h'})) \leq \frac{\pi^{2}}{2}|\chi(S)|+\pi|\chi(S)|\exp(\min\{\dth(h,h'),\dth(h',h)\}) \ .
\]
\end{reptheorem}
\begin{proof} 
We will first prove that 
$$\Vol(\mathcal{C}(M_{h,h'})) \leq \frac{\pi^{2}}{2}|\chi(S)|+\pi|\chi(S)|\exp(\dth(h,h'))~.$$
First of all, let us observe that the Lipschitz constant of a diffeomorphism $f:(S,h)\to(S,h')$ can be expressed as:
$$L(f)=\sup_{v\in TS}\frac{||df(v)||_{h'}}{||v||_h}=\sup_{x\in S}||df_x||_{\infty}~,$$
Here, $||df_x||_{\infty}$ is the spectral norm of $df_x:(T_x S,h_x)\to(T_{f(x)}S,h'_{f(x)})$. Now, from Theorem \ref{theorem hol energy}, for every diffeomorphism $f:(S,h)\rightarrow (S,h')$ isotopic to the identity, we have
$$\Vol(\mathcal{C}(M_{h,h'}))\leq  \frac{\pi^{2}}{2}|\chi(S)|+\frac{1}{4}E_{Sch}(f,h,h')~.$$
Since the spectral norm of $df$ is the maximum eigenvalue of $\sqrt{df^{*}df}$, the $1$-Schatten norm is bounded by twice the spectral norm, hence we get
$$E_{Sch}(f,h,h')=\int_S\trace(b_f)d\mathrm{A}_h\leq 2\sup_{x\in S}||df_x||_{\infty}\int_S d\mathrm{A}_h=4\pi|\chi(S)|L(f)~.$$
Hence we obtain:
\[
	\Vol(\mathcal{C}(M_{h,h'}))\leq \frac{\pi^{2}}{2}|\chi(S)|+\pi|\chi(S)|\inf_f L(f)=\frac{\pi^{2}}{2}|\chi(S)|+{\pi}|\chi(S)|e^{\dth(h,h')}~.
\]
For the main statement, observe that the involution
\begin{align*}
	\SL(2,\R) &\rightarrow \SL(2,\R) \\
	     A &\mapsto A^{-1}
\end{align*}
induces an orientation-reversing isometry of $\AdS^{3}$ which swaps the left and right metric in Mess' parameterization (see Section \ref{sec:volume}). Therefore, the volumes of the convex cores of $M_{h,h'}$ and $M_{h',h}$ are equal. Hence it follows that 
$$\Vol(\mathcal{C}(M_{h,h'})) \leq \frac{\pi^{2}}{2}|\chi(S)|+\pi|\chi(S)|\exp(\dth(h',h))$$
is also true. This concludes the proof.
\end{proof}

\subsection{A negative result}

We are now showing that it is not possible to find a lower-bound for the volume of the convex core in terms of the Thurston asymmetric distance between the left and right metric.

\begin{prop}\label{prop:controesempioThurston} There is no continuous, proper function $g:\R^{+}\rightarrow \R^{+}$ such that
\[
	g(\min\{\dth(h,h'), \dth(h',h)\})\leq\Vol(\mathcal{C}(M_{h,h'})) \ .
\]
for every couple of metrics $h, h' \in \Teich(S)$.
\end{prop}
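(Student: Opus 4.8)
The plan is to disprove the existence of such a $g$ by exhibiting a sequence of pairs $(h_{n},h_{n}')\in\Teich(S)\times\Teich(S)$ along which $\Vol(\mathcal{C}(M_{h_{n},h_{n}'}))$ remains bounded while $\min\{\dth(h_{n},h_{n}'),\dth(h_{n}',h_{n})\}\to+\infty$. Since a continuous proper function $g\colon\R^{+}\to\R^{+}$ necessarily satisfies $g(x)\to+\infty$ as $x\to+\infty$ (preimages of bounded sets are bounded), evaluating the hypothetical inequality along such a sequence would force a divergent quantity to be bounded by a constant, a contradiction.

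For the construction I would fix a non-separating simple closed curve $\gamma$ on $S$ and a simple closed curve $\delta$ with $\iota(\gamma,\delta)=1$. Choose a base hyperbolic metric and a pants decomposition containing $\gamma$, and for $\epsilon_{n}\to 0^{+}$ let $h_{n}$ be the metric with the same Fenchel--Nielsen coordinates except that the length of $\gamma$ equals $\epsilon_{n}$ and the twist along $\gamma$ is $0$. Set $t_{n}:=1/\epsilon_{n}$, $\lambda_{n}:=t_{n}\gamma$, and $h_{n}':=E_{l}^{\lambda_{n}}(h_{n})$; since an earthquake along a weighted simple closed curve is a Fenchel--Nielsen twist, $h_{n}'$ is $h_{n}$ twisted along $\gamma$ by the amount $t_{n}$. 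Because $\ell_{\lambda_{n}}(h_{n})=t_{n}\,\ell_{\gamma}(h_{n})=1$, Theorem \ref{prop:comparison volume lams} yields $\Vol(\mathcal{C}(M_{h_{n},h_{n}'}))\leq \tfrac14+\tfrac{\pi^{2}}{2}|\chi(S)|$, so the volumes stay bounded; note that $\gamma$ is the systole of both $h_{n}$ and $h_{n}'$ and its length tends to $0$, as anticipated in the introduction.

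It remains to show that both Thurston distances diverge, and by the length--ratio formula \eqref{eq thuston lipschitz} it suffices, for each $n$, to produce a single simple closed curve with large length ratio. In the standard collar model $d\rho^{2}+\epsilon_{n}^{2}\cosh^{2}(\rho)\,dx^{2}$ around $\gamma$ (with $x\in\R/\Z$ and collar half-width $w_{n}\sim\log(1/\epsilon_{n})$), the collar lemma gives $\ell_{\delta}(h_{n})\leq 2\log(1/\epsilon_{n})+C$, while on $h_{n}'$ the geodesic arc of $\delta$ inside the collar has total variation in $x$ of order $t_{n}/\epsilon_{n}$, hence length at least of order $\epsilon_{n}\cdot(t_{n}/\epsilon_{n})=t_{n}$; therefore $\dth(h_{n},h_{n}')\geq\log\big(\ell_{\delta}(h_{n}')/\ell_{\delta}(h_{n})\big)\to+\infty$, because $\ell_{\delta}(h_{n}')\gtrsim t_{n}=1/\epsilon_{n}$ while $\ell_{\delta}(h_{n})\leq 2\log(1/\epsilon_{n})+C$. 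For the opposite direction I would use $\delta_{n}':=T_{\gamma}^{k_{n}}(\delta)$, where $T_{\gamma}$ is the Dehn twist along $\gamma$ and $k_{n}:=\lfloor t_{n}/\epsilon_{n}\rfloor$ is the number of full twists approximately cancelling the earthquake: then $\ell_{\delta_{n}'}(h_{n}')$ equals $\ell_{\delta}$ of a metric whose $\gamma$-twist lies in $[0,\epsilon_{n})$, hence is at most $2\log(1/\epsilon_{n})+C$, whereas $\ell_{\delta_{n}'}(h_{n})$ equals $\ell_{\delta}$ of a metric with $\gamma$-twist of order $t_{n}$, hence is at least of order $t_{n}$ by the same collar estimate; thus $\dth(h_{n}',h_{n})\to+\infty$ as well, which completes the contradiction.

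The only substantive points are the length comparisons of the last paragraph: the upper bounds for $\ell_{\delta}$ use that only $\gamma$ is pinched and all remaining Fenchel--Nielsen data stay in a fixed compact set, so the geodesic of $\delta$ splits into a near-vertical arc crossing the collar and an $O(1)$ part outside it; the lower bounds are the familiar fact that a simple closed geodesic transverse to $\gamma$ must wind $\sim t_{n}/\epsilon_{n}$ times around a collar of width $\sim\log(1/\epsilon_{n})$ after a twist of length $t_{n}$, and this winding produces length $\gtrsim t_{n}$. I do not anticipate a genuine obstacle here; the main point is conceptual, namely that Theorem \ref{prop:comparison volume lams} anchors the volume to $\tfrac14\ell_{\lambda_{n}}(h_{n})$, so a huge twist ($t_{n}\to\infty$ with $t_{n}\epsilon_{n}$ bounded) makes the Thurston distances blow up in both directions while leaving the volume bounded.
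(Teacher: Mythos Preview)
Your proposal is correct and follows essentially the same construction as the paper: pinch a simple closed curve $\gamma$ (the paper's $\alpha$) to length $\epsilon_n\to 0$, earthquake along it with weight $1/\epsilon_n$ so that $\ell_{\lambda_n}(h_n)=1$ and hence the volume stays bounded by Theorem~\ref{prop:comparison volume lams}, and then use a transverse curve $\delta$ (the paper's $\beta$) together with its Dehn-twisted image to witness divergence of both Thurston distances. The only minor differences are that the paper places $\alpha$ inside a genus-one subsurface and uses an exact hexagon formula for $\ell_\beta(h_n)$ and the inequality $\ell_\beta(h_n')+\ell_\beta(h_n)\geq\iota(\lambda_n,\beta)$ for the lower bound on $\ell_\beta(h_n')$, whereas you rely on collar-model estimates; the paper's inequality is cleaner than your winding argument, but both give the needed $\ell_\delta(h_n')\gtrsim t_n$ versus $\ell_\delta(h_n)\lesssim\log(1/\epsilon_n)$.
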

\begin{proof}It is sufficient to show that it is possible to find a sequence of maximal globally hyperbolic $\AdS^{3}$ manifolds such that the volume of the convex core remains bounded but both Thurston's asymmetric distances between the left and right metric diverge. 

Choose a simple closed curve $\mu\in P$ that disconnects the surface in such a way that one connected component $S_{1}$ is a surface of genus $1$ with geodesic boundary equal to $\mu$. Fix a pants decomposition $P$ containing the curve $\mu$. Let $\alpha \subset S_{1}$ be the curve in the pants decomposition of $S$ contained in the interior of $S_{1}$. Fix a simple closed curve $\beta$ in $S_{1}$ (see Figure \ref{fig:controesempioThurston}) which intersects $\alpha$ in exactly one point. Choose then a hyperbolic metric $h$ on $S$ such that the geodesic representative of $\beta$  intersects $\alpha$ orthogonally. For every $n \in \mathbb{N}$ we define an element $h_{n} \in \Teich(S)$ with the property that all Fenchel-Nielsen coordinates of $h_{n}$ coincide with those of $h$ but the length of the curve $\alpha$, which we impose to be equal to ${1}/{n}$. In particular, the $h_n$-geodesic representative of $\beta$ intersects $\alpha$ orthogonally for every $n$.

\begin{figure}[h!]
\begin{tikzpicture}[scale=0.7]

\draw [blue] (3,-2.5) ellipse (0.5 and 1.5);
\draw [black] plot[smooth, tension=.7] coordinates {(3,-1) (0.5,-1) (-6.5,-0.5) (-10,-2.5) (-6.5,-4.5) (0.5,-4) (3,-4)};
\draw [black] plot[smooth, tension=.7] coordinates {(-6,-2.5) (-5,-2) (-4,-2.5)};
\draw [black] plot[smooth, tension=.7] coordinates {(-5.6,-2.2) (-5,-2.5) (-4.4,-2.2)};
\draw [green] plot[smooth, tension=.7] coordinates {(-4.8,-2.5) (-4.2,-3.2) (-4.2,-4.2) (-4.3,-4.5)};
\draw [dashed,green] plot[smooth, tension=.7] coordinates {(-4.3,-4.5) (-4.8,-3.9) (-5,-3.4) (-4.9,-2.5) };
\draw [red] plot[smooth, tension=.7] coordinates {(-4,-3.5) (-6.5,-3.5) (-8,-2.5) (-6,-1.5) (-2.5,-2) (-2,-3) (-4,-3.5)};
\node at (3.7,-2.0) [blue] {$\mu$};
\node at (-4, -3.2) [green] {$\alpha$};
\node at (-7.7,-2.5) [red] {$\beta$};
\end{tikzpicture}
\caption{\small{Curves described in the proof of Proposition \ref{prop:controesempioThurston}}}
\label{fig:controesempioThurston}
\end{figure}
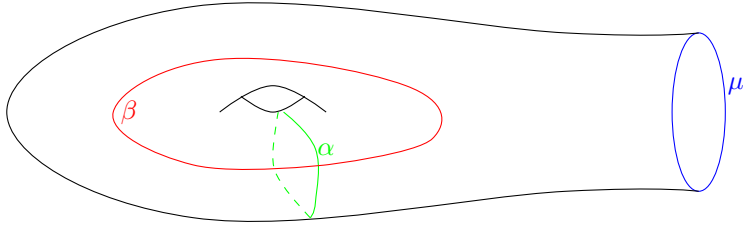

Consider the measured geodesic laminations $\lambda_{n}$ consisting of the simple closed curve $\alpha$ with weight $n$. We define a second sequence of hyperbolic metrics $h'_{n}$ as $h_{n}'=E_{l}^{\lambda_{n}}(h_{n})$. Notice that these metrics are obtained from $h_{n}$ by performing $n^{2}$ Dehn-twists along $\alpha$. We are going to show that the volume of the convex core of the maximal globally hyperbolic $\AdS^{3}$ manifolds $M_{n}=M_{h_{n},h_{n}'}$ remains bounded but the two Thurston's asymmetric distances between $h_{n}$ and $h_{n}'$ go to infinity when $n$ tends to $+\infty$. \\
By Equation \eqref{eq volume convex core 1} in Theorem \ref{prop:comparison volume lams}, the volume of the convex core of $M_{n}$ is coarsely equivalent to the length of $\lambda_{n}$, which by definition is
\[
	\ell_{\lambda}(h_{n})=\ell_{\alpha}(h_{n})\cdot\frac{1}{n}=1 \ ,
\]
hence the volume remains bounded.

On the other hand, since the curve $\beta$ intersects $\alpha$ orthogonally, for every metric $h_{n}$ we claim that 
\[
	\ell_{\beta}(h_{n})=4\arcsinh\left(\frac{\cosh(\frac{\ell_{\mu}(h)}{4})}{\sinh(\frac{1}{2n})}\right)  \ .
\]
To prove the claim, we cut the surface $S_{1}$ along the curve $\alpha$, thus obtaining a pair of pants $P'$ with geodesic boundaries given by $\mu$ and two copies of $\alpha$. If we cut again $P'$ into two right-angled hexagons (see Figure \ref{fig:controesempioThurston2}), the length of the curve $\beta$ can be computed using standard hyperbolic trigonometry \cite{thurston2}. Here we are also using the fact that the length of the curve $\mu$ does not depend on $n$.

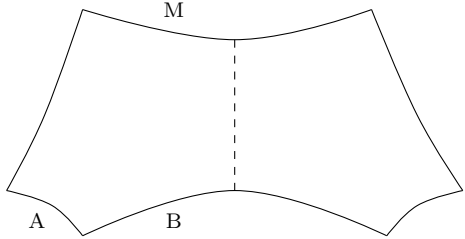
\begin{figure}[h!]
\begin{tikzpicture}

\draw  plot[smooth, tension=.7] coordinates {(-2,2.4) (0,2) (1.8,2.4)};
\draw  plot[smooth, tension=.7] coordinates {(-2,2.4) (-2.5,1) (-3,0)};
\node at (-0.8,2.4) {\footnotesize M};
\node at (-2.6,-0.4) {\footnotesize A};
\node at (-0.8, -0.4){\footnotesize B};
\draw  plot[smooth, tension=.7] coordinates {(-3,0) (-2.4,-0.2) (-2,-0.6)};
\draw  plot[smooth, tension=.7] coordinates {(1.8,2.4) (2.4,1) (3,0)};
\draw  plot[smooth, tension=.7] coordinates {(3,0) (2.4,-0.2) (2,-0.6)};
\draw  plot[smooth, tension=.7] coordinates {(-2,-0.6) (0,0) (2,-0.6)};
\draw [dashed] plot[smooth, tension=.7] coordinates {(0,2) (0,0)};

\end{tikzpicture}

\caption{\small{
The lengths of the edges $A$,$B$ and $M$ satisfy $\sinh(A)\sinh(B/2)=\cosh(M/2)$.}}
\label{fig:controesempioThurston2}

\end{figure}

\noindent Hence we obtain
\begin{equation}\label{eq:stima asintotica curva}
	\ell_{\beta}(h_{n})\leq C_1\left|\log(n)\right| +C_2 \ ,
\end{equation}
for some constants $C_1,C_2$, when $n$ is sufficiently big. 
Moreover, by a simple application of the triangle inequality (see \cite[Lemma 7.1]{BonSchlGAFA2009}, and recall Definition \ref{defi intersection} for the intersection of measured geodesic laminations), we can deduce that
\[
	\ell_{\beta}(h_{n}')+\ell_{\beta}(h_{n})\geq \iota(\lambda_{n}, \beta)=n \ ,
\]
thus
\[
	\frac{\ell_{\beta}(h_{n}')}{\ell_{\beta}(h_{n})}\geq -1+\frac{n}{\ell_{\beta}(h_{n})} \to +\infty
\]
when $n$ tends to $+\infty$ by using Equation \eqref{eq:stima asintotica curva}. Therefore, by definition $\dth(h_{n}, h_{n}') \to +\infty$ \ .\\
To prove that also $\dth(h_{n}', h_{n})$ is unbounded, it is sufficient to repeat the same argument for the curve $\beta'=D^{n^{2}}_{\alpha}(\beta)$ obtained from $\beta$ by performing $n^{2}$ Dehn-twists along $\alpha$. Namely, by construction, the curve $\beta'$ intersects orthogonally the curve $\alpha$ for the metric $h_{n}'$, thus the same estimate as in Equation \eqref{eq:stima asintotica curva} holds for the length of the curve $\beta'$ with respect to the metric $h'_{n}$. 
\end{proof}

\subsection{Discussion of the optimality}

In this subsection, we will construct some examples to show that the result of Theorem \ref{cor:confrontovolumeThurston} is optimal, in some sense. The first situation we consider is the case of a sequence of manifolds $M_{h,h'}$ for which one metric is fixed, and the other metric diverges in $\Teich(S)$. In this case, the volume of $M_{h,h'}$ is in fact bounded also from below by the exponential of Thurston's asymmetric distance:

\begin{prop} \label{prop lower bound th}
Let $\Omega$ be a compact set in $\Teich(S)$ and let $M_{h,h'}$ be a maximal globally hyperbolic $\AdS^{3}$ manifold with $h\in\Omega$. There exists a constant $C=C(\Omega)>0$ such that 
\[
	C(\Omega)\exp(\dth(h,h'))-C(\Omega)\leq \Vol(\mathcal{C}(M_{h,h'}))
\]
for every $h\in\Omega$ and every $h'\in\Teich(S)$.
\end{prop}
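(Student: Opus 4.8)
The plan is to combine Theorem~\ref{prop:comparison volume lams}, which reduces the problem to bounding from below the length of an earthquake lamination, with the fact that on the thick part of Teichm\"uller space the Thurston distance from $h$ is controlled by such a length. Let $\lambda\in\mathcal{ML}(S)$ be the left earthquake lamination, so that $E_l^{\lambda}(h)=h'$. By Theorem~\ref{prop:comparison volume lams} we have $\Vol(\mathcal{C}(M_{h,h'}))\ge \tfrac14\ell_\lambda(h)$, so it is enough to produce a constant $c_0=c_0(\Omega)>0$ with
\[
	\ell_\lambda(h)\ \ge\ c_0\bigl(\exp(\dth(h,h'))-1\bigr)\ ;
\]
indeed, since $\dth(h,h')\ge 0$, the right-hand side is a genuine non-negative lower bound, and the claim then holds with $C(\Omega)=c_0/4$.

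To obtain this I would use two ingredients. First, the classical estimate for the growth of lengths under an earthquake: for every simple closed curve $c$,
\[
	\ell_c(h')\ \le\ \ell_c(h)+\iota(\lambda,c)\ ,
\]
which for weighted multicurves is seen by representing the sheared curve as $c$ with loops around the weighted leaves inserted, and extends to arbitrary $\lambda$ by continuity of $\ell$ and $\iota$ (compare the companion lower bound \cite[Lemma~7.1]{BonSchlGAFA2009} used above; a harmless universal multiplicative constant in front of $\iota(\lambda,c)$ would not affect the argument). Second, the compactness of $\Omega$ gives an $\epsilon=\epsilon(\Omega)>0$ such that $\injrad(h)\ge\epsilon$ at every point, for all $h\in\Omega$: cutting the geodesic representatives of two simple closed curves $\alpha,\beta$ into subarcs of length at most $\epsilon/2$ and using that two such subarcs lie in a common embedded hyperbolic ball --- hence meet at most once --- yields $\iota(\alpha,\beta)\le C_1(\epsilon)\,\ell_\alpha(h)\,\ell_\beta(h)$ for some $C_1(\epsilon)>0$, and therefore, by density of weighted multicurves in $\mathcal{ML}(S)$, $\iota(\lambda,c)\le C_1(\epsilon)\,\ell_\lambda(h)\,\ell_c(h)$ for every simple closed curve $c$.

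Combining the two inequalities, $\ell_c(h')/\ell_c(h)\le 1+C_1(\epsilon)\,\ell_\lambda(h)$ for every simple closed curve $c$; taking the supremum over $c$ and invoking Thurston's formula \eqref{eq thuston lipschitz} for the best Lipschitz constant gives $\exp(\dth(h,h'))\le 1+C_1(\epsilon)\,\ell_\lambda(h)$, which is the displayed inequality with $c_0=1/C_1(\epsilon)$. Setting $C(\Omega)=1/(4\,C_1(\epsilon(\Omega)))$ finishes the proof, and the same argument applied to the right earthquake lamination (or the symmetry $M_{h,h'}\leftrightarrow M_{h',h}$) is not even needed here since the statement is not symmetric in $h,h'$.

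The step I expect to require the most care --- and the one where the hypothesis on $\Omega$ genuinely enters --- is the thick-part intersection estimate $\iota(\lambda,c)\le C_1(\epsilon)\ell_\lambda(h)\ell_c(h)$: one must keep track of how $C_1(\epsilon)$ degenerates as $\epsilon\to 0$, and it is precisely this degeneration that makes the inequality false for unrestricted $h$, as the examples in Proposition~\ref{prop:controesempioThurston} (where the systole tends to $0$) demonstrate. Once this quantitative input is in place, the earthquake length bound and the reduction through Theorem~\ref{prop:comparison volume lams} are routine.
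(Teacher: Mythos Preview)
Your argument is correct and follows the same overall strategy as the paper: reduce via Theorem~\ref{prop:comparison volume lams} to a lower bound on $\ell_\lambda(h)$, use $\ell_c(h')\le\ell_c(h)+\iota(\lambda,c)$, and then control $\iota(\lambda,c)$ by $C_1\,\ell_\lambda(h)\,\ell_c(h)$ uniformly for $h\in\Omega$.

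The only substantive difference is in how the key intersection estimate $\iota(\mu,\lambda)\le D(\Omega)\,\ell_\mu(h)\,\ell_\lambda(h)$ is obtained. The paper proves it by a soft compactness argument: assuming the ratio blows up along a sequence $(h_n,\mu_n,\lambda_n)$, one extracts limits using compactness of $\Omega$ and of the space of projective measured laminations, and derives a contradiction from the continuity of $\iota$ and $\ell$. You instead give an explicit geometric proof via the systole: compactness of $\Omega$ yields $\injrad(h)\ge\epsilon(\Omega)$, and the subarc-counting argument produces $C_1(\epsilon)$ with an explicit dependence on $\epsilon$ (roughly $\epsilon^{-2}$). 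Your route has the advantage of exhibiting the constant and of making the passage to the $\epsilon$-thick part corollary immediate; the paper's route is shorter and avoids any bookkeeping. One small point worth making precise in your write-up: the bound on the number of subarcs is $\lceil 2\ell_\alpha/\epsilon\rceil$, so to absorb the ceiling into a constant times $\ell_\alpha$ you should note that every closed geodesic on $(S,h)$ has length at least $2\epsilon$.
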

\begin{proof}
Let $h'$ be any hyperbolic metric on $S$, let $\lambda$ be the measured lamination such that $h'=E_l^\lambda(h)$, and let $\alpha$ be any simple closed curve on $S$. By a simple formula, we have:
$$\ell_\alpha(h')\leq \ell_\alpha(h)+\iota(\lambda,\alpha)~.$$
In fact, it is easy to check that this formula is true when $\lambda$ is a simple closed curve, since the $h'$-geodesic representative of $\alpha$ is shorter than the piecewise-geodesic curve obtained by glueing the image of the $h$-geodesic representative of $\alpha$ and subintervals of the simple closed curve $\lambda$ according to the earthquake measure.
The general case follows by a continuity argument. Hence we have:
$$\frac{\ell_\alpha(h')}{\ell_\alpha(h)}\leq 1+\frac{\iota(\lambda,\alpha)}{\ell_\alpha(h)}~.$$ 
We claim that there exists a constant $C=C(\Omega)>0$ such that, for every pair of measured laminations $\mu,\lambda\in\mathcal{ML}(S)$,
$$\frac{\iota(\mu,\lambda)}{\ell_\mu(h)\ell_\lambda(h)}\leq D(\Omega)~.$$
The proof will then follows directly from the claim, since we will then have
$$\frac{\ell_\alpha(h')}{\ell_\alpha(h)}\leq 1+D(\Omega)\ell_\lambda(h)\leq 1+4D(\Omega)\Vol(\mathcal{C}(M_{h,h'}))$$
by Theorem \ref{prop:comparison volume lams}, for every simple closed curve $\alpha$. Therefore (recall Equation \eqref{eq thuston lipschitz})),
$$C(\Omega)\exp(\dth(h,h'))-C(\Omega)=C(\Omega)\sup_\alpha \frac{\ell_\alpha(h')}{\ell_\alpha(h)}-C(\Omega)\leq \Vol(\mathcal{C}(M_{h,h'}))~,$$
where $C(\Omega)=1/4D(\Omega)$.

To prove the claim, suppose by contradiction there exists no such constant $D(\Omega)$, and therefore there exist a sequence $h_n\in\Omega$, and sequences $\mu_n,\lambda_n\in\mathcal{ML}(S)$ such that
$$\frac{\iota(\mu_n,\lambda_n)}{\ell_{\mu_n}(h_n)\ell_{\lambda_n}(h_n)}\to+\infty~.$$
Now, up to extracting subsequences, we can assume $h_n\to h_\infty\in\Omega\subset\Teich(S)$. Moreover, by the compactness of the space of projective measured laminations on $S$, we can assume that there exist $a_n,b_n>0$ such that  $a_n\mu_n\to\mu_\infty$ and $b_n\lambda_n\to\lambda_\infty$, for $\mu_\infty,\lambda_\infty\neq 0$. This leads to a contradiction, as 
$$\frac{\iota(a_n\mu_n,b_n\lambda_n)}{\ell_{a_n\mu_n}(h_n)\ell_{b_n\lambda_n}(h_n)}=\frac{\iota(\mu_n,\lambda_n)}{\ell_{\mu_n}(h_n)\ell_{\lambda_n}(h_n)}\to\frac{\iota(\mu_\infty,\lambda_\infty)}{\ell_{\mu_\infty}(h_\infty)\ell_{\lambda_\infty}(h_\infty)}<+\infty$$
since the quantities $\ell$ and $\iota$ vary with continuity.
\end{proof}

Recall that the action of the mapping class group of $S$ on 
$$\Teich_\epsilon(S):=\{ h \in \Teich(S) \ | \ \injrad(h)\geq \epsilon\}$$
is co-compact, by \cite{MR0276410}. As the volume $\Vol(\mathcal{C}(M_{h,h'})$ is invariant under the diagonal action of the mapping class group on $\Teich(S)\times\Teich(S)$, we deduce the following stronger version of Proposition \ref{prop lower bound th}.

\begin{cor}
Given any $\epsilon>0$, there exists a constant $C=C(\epsilon)$ such that
\[
	C(\epsilon)\exp(\dth(h,h'))-C(\epsilon)\leq \Vol(\mathcal{C}(M_{h,h'}))
\]
for every $h\in\Teich_\epsilon(S)$ and every $h'\in\Teich(S)$, where $\Teich_\epsilon(S)$ is the $\epsilon$-thick part of Teichm\"uller space of $S$.
\end{cor}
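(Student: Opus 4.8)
The plan is to deduce this Corollary from Proposition \ref{prop lower bound th} by promoting the compact set $\Omega$ to a fundamental domain for the action of the mapping class group on the thick part. First I would invoke Mumford's compactness criterion \cite{MR0276410}, which asserts that $\Teich_\epsilon(S)/\mathrm{MCG}(S)$ is compact; hence there is a \emph{compact} set $\Omega=\Omega(\epsilon)\subset\Teich(S)$ such that every $h\in\Teich_\epsilon(S)$ can be written as $h=g\cdot h_0$ for some $g$ in the mapping class group of $S$ and some $h_0\in\Omega$, where $g\cdot h$ denotes the natural action on $\Teich(S)$ by pull-back of metrics.

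Next I would verify that both quantities in the statement are invariant under the diagonal action of the mapping class group on $\Teich(S)\times\Teich(S)$. For the volume: if $g$ is represented by a diffeomorphism $\psi$ of $S$, then the left and right holonomy representations of $M_{g\cdot h,g\cdot h'}$ are obtained from those of $M_{h,h'}$ by precomposition with $\psi_*\colon\pi_1(S)\to\pi_1(S)$, which yields an isometric Anti-de Sitter manifold; in particular $\Vol(\mathcal{C}(M_{g\cdot h,g\cdot h'}))=\Vol(\mathcal{C}(M_{h,h'}))$. For Thurston's asymmetric distance: the map $f\mapsto\psi\circ f\circ\psi^{-1}$ is a bijection between diffeomorphisms isotopic to the identity from $(S,h)$ to $(S,h')$ and those from $(S,g\cdot h)$ to $(S,g\cdot h')$, and since $\psi\colon(S,g\cdot h)\to(S,h)$ and $\psi^{-1}\colon(S,h')\to(S,g\cdot h')$ are isometries, the Lipschitz constant is unchanged, so that $\dth(g\cdot h,g\cdot h')=\dth(h,h')$.

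With these preliminaries in place, given $h\in\Teich_\epsilon(S)$ and $h'\in\Teich(S)$, I would choose $g$ with $g\cdot h\in\Omega$ and apply Proposition \ref{prop lower bound th} to the pair $(g\cdot h,g\cdot h')$ (note that $g\cdot h'$ is just some element of $\Teich(S)$, which is allowed), obtaining a constant $C(\Omega)$ — depending only on $\epsilon$, since $\Omega=\Omega(\epsilon)$ — with
\[
C(\Omega)\exp(\dth(g\cdot h,g\cdot h'))-C(\Omega)\leq\Vol(\mathcal{C}(M_{g\cdot h,g\cdot h'}))~.
\]
By the two invariances established above, the left-hand side equals $C(\Omega)\exp(\dth(h,h'))-C(\Omega)$ and the right-hand side equals $\Vol(\mathcal{C}(M_{h,h'}))$, so setting $C(\epsilon):=C(\Omega)$ completes the proof.

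I do not expect any genuine obstacle here: the content is entirely carried by Proposition \ref{prop lower bound th} together with Mumford's compactness theorem. The only point requiring mild care is the bookkeeping for the two invariance statements, which is routine once one fixes conventions for how the mapping class group acts on Teichm\"uller space, on Mess' parameterization, and on the set of diffeomorphisms isotopic to the identity.
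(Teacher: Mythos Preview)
Your proposal is correct and follows exactly the argument sketched in the paper: Mumford's compactness gives a compact $\Omega(\epsilon)$, and the diagonal invariance of both $\Vol(\mathcal{C}(M_{h,h'}))$ and $\dth(h,h')$ under the mapping class group lets you transport Proposition~\ref{prop lower bound th} from $\Omega$ to all of $\Teich_\epsilon(S)$. You are in fact slightly more careful than the paper, which only states the invariance of the volume and leaves that of $\dth$ implicit.
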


We will now discuss the optimality of the multiplicative constant in the upper bound of Theorem \ref{cor:confrontovolumeThurston}. More precisely, we will exhibit a sequence of examples, in a surface $S_g$ for any genus $g$, so that the volume grows actually like $|\chi(S_g)|\exp(\min\{\dth(h,h'), \dth(h',h)\})$.

\begin{prop} \label{prop optimality genus}
There exist universal constants $C,g_0>0$ and there exist sequences {of} hyperbolic metrics $h_g,h'_g$ in $\Teich(S_g)$, where $S_g$ is the closed orientable surface of genus $g$, such that:
$$\Vol(\mathcal{C}(M_{h_g,h'_g}))\geq C|\chi(S_g)|\exp(\dth(h_g,h_g'))$$
for every $g\geq g_0$.
\end{prop}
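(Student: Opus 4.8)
The plan is to build, for every genus $g\ge 2$, a pair $(h_g,h_g')$ obtained one from the other by an earthquake along a weighted multicurve whose support is an \emph{entire} pants decomposition of $S_g$, on which all curves have one and the same fixed length $\epsilon$, chosen once and for all independently of $g$. For such a pair the earthquake lamination has length proportional to the number of pants curves, hence to $|\chi(S_g)|$, so Theorem \ref{prop:comparison volume lams} forces the volume of the convex core to grow linearly in $|\chi(S_g)|$; at the same time, since every curve in the support has a fixed length, the collar lemma keeps $\dth(h_g,h_g')$ bounded independently of $g$. As $e^{\dth(h_g,h_g')}$ is then bounded, these two facts combine to give precisely the claimed inequality.

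Concretely, fix constants $\epsilon>0$ and $w_0>0$. For $g\ge 2$, choose a pants decomposition $\{\gamma_1,\dots,\gamma_{3g-3}\}$ of $S_g$ and let $h_g\in\Teich(S_g)$ be the hyperbolic metric whose Fenchel--Nielsen coordinates relative to this decomposition are $\ell_{\gamma_j}(h_g)=\epsilon$ with zero twist, for every $j$. Set $\lambda_g=w_0\sum_{j=1}^{3g-3}\gamma_j\in\mathcal{ML}(S_g)$ and $h_g':=E_l^{\lambda_g}(h_g)$. By the Earthquake theorem (Theorem \ref{thm thurston}), $\lambda_g$ is precisely the left earthquake lamination from $h_g$ to $h_g'$, so Theorem \ref{prop:comparison volume lams} gives
\[
\Vol(\mathcal C(M_{h_g,h_g'}))\ \ge\ \tfrac14\,\ell_{\lambda_g}(h_g)\ =\ \tfrac{w_0}{4}\sum_{j=1}^{3g-3}\ell_{\gamma_j}(h_g)\ =\ \tfrac{w_0\epsilon}{4}\,(3g-3)\ =\ \tfrac{3w_0\epsilon}{8}\,|\chi(S_g)| .
\]

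Next I would bound $\dth(h_g,h_g')$ from above, uniformly in $g$. By the collar lemma every $\gamma_j$ admits an embedded collar of half-width $\delta_0:=\arcsinh\big(1/\sinh(\epsilon/2)\big)>0$, and these collars are pairwise disjoint; hence any closed geodesic $c$ on $(S_g,h_g)$ satisfies $\ell_c(h_g)\ge 2\delta_0\sum_j\iota(c,\gamma_j)$. Using the inequality $\ell_c(E_l^{\lambda_g}(h_g))\le\ell_c(h_g)+\iota(\lambda_g,c)$ --- the estimate already used in the proof of Proposition \ref{prop lower bound th} --- together with $\iota(\lambda_g,c)=w_0\sum_j\iota(c,\gamma_j)\le\tfrac{w_0}{2\delta_0}\,\ell_c(h_g)$, one gets $\ell_c(h_g')/\ell_c(h_g)\le 1+\tfrac{w_0}{2\delta_0}$ for \emph{every} simple closed curve $c$, so by \eqref{eq thuston lipschitz} one has $\dth(h_g,h_g')\le D_0:=\log\big(1+\tfrac{w_0}{2\delta_0}\big)$, a constant independent of $g$. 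Together with the volume estimate above this yields $\Vol(\mathcal C(M_{h_g,h_g'}))\ge\tfrac{3w_0\epsilon}{8}|\chi(S_g)|\ge\big(\tfrac{3w_0\epsilon}{8}e^{-D_0}\big)\,|\chi(S_g)|\,e^{\dth(h_g,h_g')}$, and the proposition follows with $C=\tfrac{3w_0\epsilon}{8}e^{-D_0}$ and $g_0=2$.

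The core of the argument, and the only place requiring genuine care, is the uniformity in $g$ of the bound on the Thurston distance: one must control the ratio $\ell_c(h_g')/\ell_c(h_g)$ over \emph{all} simple closed curves $c$ at once, and be sure that the resulting constant does not deteriorate as $g\to\infty$. This is exactly what the collar lemma guarantees, because the collar half-width $\delta_0$ depends only on the fixed length $\epsilon$ and not on $g$. The remaining ingredients --- the existence of $h_g$ with the prescribed Fenchel--Nielsen data, and the volume lower bound --- are immediate from the cited results.
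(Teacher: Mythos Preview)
Your proof is correct and follows essentially the same route as the paper's: both take a pants decomposition with all boundary lengths equal to a fixed constant, earthquake along the multicurve with a fixed weight, bound the volume from below by $\tfrac14\ell_\lambda(h_g)\asymp|\chi(S_g)|$ via Theorem~\ref{prop:comparison volume lams}, and bound $\dth(h_g,h_g')$ from above uniformly in $g$ using that the intersection number of any simple closed curve with the pants system is controlled by its length. The only cosmetic difference is that the paper bounds $\ell_c(h_g)$ from below using the distance $r(u)$ between boundary components of the equilateral pair of pants, whereas you use the collar half-width $\delta_0=\arcsinh(1/\sinh(\epsilon/2))$; since $r(u)>2\delta_0$ the paper's constant is marginally sharper, but the arguments are otherwise identical, and your version has the small advantage of giving $g_0=2$ directly.
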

\begin{proof}
Fix a pants decomposition $P_g$ of $S_g$, which is composed of $3g-3$ disjoint simple closed curves $\alpha_1,\ldots,\alpha_{3g-3}$. Consider a hyperbolic metric $h_g$ for which all the the simple closed curves $\alpha_1,\ldots,\alpha_{3g-3}$ have the same length, say $u$ (independently of $g$).  

Let us consider the hyperbolic metric $h'=E_l^\lambda(h)$, where $\lambda$ is the multicurve $\alpha_1,\ldots,\alpha_{3g-3}$, where all the curves are endowed with the same weight $w>0$. Now, given any other simple closed curve $\alpha$, we have (as in the proof of Proposition \ref{prop lower bound th}):
$$\frac{\ell_\alpha(h')}{\ell_\alpha(h)}\leq 1+\frac{\iota(\lambda,\alpha)}{\ell_\alpha(h)}=1+w\frac{\iota(P,\alpha)}{\ell_\alpha(h)}~.$$ 
Now, observe that every time the curve $\alpha$ crosses a curve $\alpha_i$ of $P$, $\alpha$ needs to exit the pair of pants adjacent to $\alpha_i$ through some boundary component of the same pair of pants. Hence the length of $\alpha$ is at least the intersection number $\iota(P,\alpha)$ times the distance between two boundary components. Since we chose $h_g$ so that all pairs of pants in the decomposition have the same length $u$ for all boundary components, the distance between two boundary components can be computed, as in Proposition \ref{prop:controesempioThurston}, as:
$$r(u)=2\arcsinh\left(\frac{\cosh(\frac{u}{4})}{\sinh(\frac{u}{2})}\right)=2\arcsinh\left(\frac{1}{2\sinh(\frac{u}{4})}\right)~.$$
Thus we obtain
$$\ell_\alpha(h)\geq \iota(P,\alpha)\cdot r(u)~.$$ 
On the other hand, observe that $\ell_\lambda(h)=w(3g-3)u$. 
Hence we get:
$$\frac{\ell_\alpha(h')}{\ell_\alpha(h)}\leq 1+w\frac{\iota(P,\alpha)}{\iota(P,\alpha)r(u)}=1+\frac{w}{r(u)}=1+\frac{\ell_\lambda(h)}{(3g-3)ur(u)}~.$$
Since this inequality holds for every simple closed curve $\alpha$, recalling Equation \eqref{eq thuston lipschitz}, we obtain:
$$\exp(\dth(h_g,h_g'))\leq 1+\frac{2}{(3ur(u))}\frac{1}{|\chi(S_g)|}\ell_\lambda(h)\leq  1+\frac{8}{(3ur(u))}\frac{1}{|\chi(S_g)|}\Vol(\mathcal{C}(M_{h_g,h'_g}))~,$$
where in the last step we have used Theorem \ref{prop:comparison volume lams}. In particular this shows that
$$\frac{\Vol(\mathcal{C}(M_{h_g,h'_g}))}{\exp(\dth(h_g,h_g'))-1}\geq C_0|\chi(S_g)|~,$$
for some constant $C_0>0$. Since $\ell_\lambda(h)$ (and thus also the volume) is going to infinity, it follows that
$$\Vol(\mathcal{C}(M_{h_g,h'_g}))\geq C|\chi(S_g)|\exp(\dth(h_g,h_g'))$$
for every constant $C<C_0$, if $g\geq g_0$. This concludes the claim.
\end{proof}
\begin{remark}
In the proof of Proposition \ref{prop optimality genus}, the constant $C_0$ can actually be chosen as:
$$C_0=\frac{3}{8}\max_{u\in(0,\infty)}ur(u)~.$$
A numerical computation shows $C_0\sim 1.30\ldots$, and thus $C$ can be chosen arbitrarily close to this value.
\end{remark}

\begin{remark}
The proof of Proposition \ref{prop optimality genus} actually produces sequences $h_g,h_g'$ such that 
$$\Vol(\mathcal{C}(M_{h_g,h'_g}))\geq C|\chi(S_g)|\exp\left(\max\{\dth(h_g,h_g'),\dth(h_g',h_g)\}\right)~.$$
In fact, $h_g$ was chosen so that all pairs of pants in the pants decomposition $P$ have a certain shape, and $h_g'$ is obtained by earthquake along $P$. Hence for the metric $h_g'$, the pairs of pants also have this shape as well (in other words, $h_g$ and $h_g'$ only differ by twist coordinates in the Fenchel-Nielsen coordinates provided by $P$). Hence, switching left earthquakes with right earthquakes, the proof holds analogously for the other Thurston's distance.
\end{remark}

\section{Weil-Petersson distance}\label{sec:distanzaWP}
In this section we study the relation between the volume of a maximal globally hyperbolic $\AdS^{3}$ manifold and the Weil-Petersson distance between its left and right metric. 

\subsection{Weil-Petersson metric on Teichm\"uller space}
The Weil-Petersson metric is a Riemannian metric on Teichm\"uller space, which connects the hyperbolic and the complex geometry of surfaces. \\
\\
Given a Riemann surface $(S,X)$, let us denote by $K$ the canonical line bundle of $S$, that is the holomorphic cotangent bundle. It is known that the vector space $QD(X)=H^0(S,K^2)$ of holomorphic quadratic differentials on $(S,X)$ has complex dimension $3g-3$ and can be identified with the cotangent space $T^{*}_{[X]}\Teich(S)$. We recall briefly this identification for the convenience of the reader.
\\
\indent A Beltrami differential $\mu$ is a smooth section of the vector bundle $\overline{K}\otimes K^{-1}$. In local coordinates, we can write $\mu=\mu(z)\frac{d\overline{z}}{dz}$. Beltrami differentials can be interpreted as $(0,1)$-forms with value in the tangent bundle of $S$ and correspond to infinitesimal deformations of the complex structure $X$. If we denote with $BD(X)$ the vector space of Beltrami differentials and with $BD_{tr}(X)$ the subspace corresponding to trivial deformations of the complex structure $X$, we have an identification:
\[
	T_{[X]}\Teich(S) \cong BD(X)/BD_{tr}(X) \ .
\]
The duality pairing between a Beltrami differential $\mu$ and a holomorphic quadratic differential $\Phi$
\[
	\langle \mu , \Phi \rangle = \int_{S} \mu(z)\phi(z) dz\wedge d\bar{z} \ ,
\]
where in local coordinates $\Phi=\phi(z)dz^{2}$, induces the aforementioned isomorphism $QD(X)\cong T^{*}_{[X]}\Teich(S)$.

Let $h$ be the unique hyperbolic metric on $S$ compatible with the complex structure $X$. If we write in local coordinates $h=\sigma_{0}^{2}(z)|dz|^{2}$, the Weil-Petersson metric on $\Teich(S)$ arises from the real part of the Hermitian product on $QD(X)$, namely:
\[
	\langle \Phi, \Psi\rangle_{\weil}=\int_{S} \frac{\phi(z)\overline{\psi}(z)}{\sigma_{0}^{2}(z)}dz\wedge d\overline{z}
\]
via the above duality pairing. \\
\\ 
\indent The Weil-Petersson metric is geodesically convex (\cite{Wolpert-Nielsenproblem}), it has negative sectional curvature (\cite{Wolpert-Chernforms}, \cite{Tromba-WPmetric}) and the mapping class group acts by isometries (\cite{WPisometry}). However, the Weil-Petersson metric is not complete (\cite{Wolpert-WPincomplete}) and its completion gives rise to the augmented Teichm\"uller space $\overline{\Teich(S)}$, obtained by adding noded  Riemann surfaces (\cite{Masur-WPcompletion}). The Weil-Petersson distance from a point $X \in \Teich(S)$ to a noded Riemann surface $Z$ with nodes along a collection of curves $\alpha_{1}, \ldots, \alpha_{k}$ is estimated by (see \cite[Section 4]{Wolpert1} and \cite[Theorem 2.1]{MR2872556})
\begin{equation}\label{eq:WPpinching}
	\dwp(X,Z)\leq \sqrt{2\pi\ell} \ ,
\end{equation}
where
\[
	\ell=\ell_{\alpha_{1}}(h)+\ldots +\ell_{\alpha_{k}}(h)
\]
is the sum of the lengths of the curves $\alpha_{j}$ computed with respect to the unique hyperbolic metric $h$ compatible with the complex structure on $X$. 

\subsection{A negative result}
The failure of completeness of the Weil-Petersson metric at limits of pinching sequences in Teichm\"uller space implies that it is not possible to find an upper-bound for the volume of a maximal globally hyperbolic $\AdS^{3}$ manifold $M_{h_{r}, h_{l}}$ in terms of the Weil-Petersson distance $\dwp(h_{l}, h_{r})$, as the following proposition shows.

\begin{prop} It is not possible to find a continuous, increasing and unbounded function $f: \R^{+} \rightarrow \R^{+}$ such that 
\[
	\Vol(M_{h_{l}, h_{r}}) \leq f(\dwp(h_{l},h_{r})) \ .
\]
\end{prop}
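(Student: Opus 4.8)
The plan is to construct a sequence of maximal globally hyperbolic $\AdS^3$ manifolds $M_n=M_{h_n,h_n'}$ for which $\dwp(h_n,h_n')$ stays bounded (in fact tends to $0$) while $\Vol(M_n)\to+\infty$. Since a continuous increasing function $f\colon\R^+\to\R^+$ is bounded on every bounded interval, such a sequence immediately rules out an inequality of the form $\Vol(M_{h_l,h_r})\leq f(\dwp(h_l,h_r))$.

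First I would fix a simple closed curve $\alpha$ on $S$, together with a pants decomposition $P$ of $S$ containing $\alpha$, and a pinching sequence $h_n\in\Teich(S)$: hyperbolic metrics all of whose Fenchel--Nielsen coordinates with respect to $P$ are constant in $n$, except that $\ell_\alpha(h_n)=\epsilon_n$ with $\epsilon_n\to 0$. I then set $\lambda_n=(\alpha,w_n)\in\mathcal{ML}(S)$, with weights $w_n>0$ to be chosen, and define $h_n'=E_l^{\lambda_n}(h_n)$. Since $\lambda_n$ is by construction precisely the left-earthquake lamination from $h_n$ to $h_n'$, Theorem \ref{prop:comparison volume lams} gives
\[
	\Vol(\mathcal C(M_n))\ \geq\ \frac14\,\ell_{\lambda_n}(h_n)\ =\ \frac14\,w_n\,\ell_\alpha(h_n)\ =\ \frac14\,w_n\epsilon_n\ ,
\]
and hence $\Vol(M_n)\geq\frac14 w_n\epsilon_n$ by Corollary \ref{cor volume manifold and convex core}. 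Choosing $w_n=n/\epsilon_n$ makes $\Vol(M_n)\geq n/4\to+\infty$.

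It remains to bound $\dwp(h_n,h_n')$. An earthquake along the simple closed curve $\alpha$ is a (fractional) twist along $\alpha$: it does not change the restriction of the hyperbolic metric to a collar of $\alpha$, so $\ell_\alpha(h_n')=\ell_\alpha(h_n)=\epsilon_n$, and in the Fenchel--Nielsen coordinates adapted to $P$ it modifies only the twist coordinate along $\alpha$. Consequently, as $\epsilon_n\to 0$, both $h_n$ and $h_n'$ converge in the augmented Teichm\"uller space $\overline{\Teich(S)}$ to the \emph{same} noded Riemann surface $Z$ obtained by pinching $\alpha$ (the twist coordinate along $\alpha$ is irrelevant in the limit). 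Applying the pinching estimate \eqref{eq:WPpinching} to $h_n$ and to $h_n'$ and using the triangle inequality in the completion,
\[
	\dwp(h_n,h_n')\ \leq\ \dwp(h_n,Z)+\dwp(Z,h_n')\ \leq\ 2\sqrt{2\pi\epsilon_n}\ \xrightarrow[n\to\infty]{}\ 0\ .
\]
Thus if $f$ as in the statement existed, then, since $\epsilon_n\leq\epsilon_1$ for all $n$, we would get $\Vol(M_n)\leq f(\dwp(h_n,h_n'))\leq \sup\{f(t): 0\leq t\leq 2\sqrt{2\pi\epsilon_1}\}<+\infty$, contradicting $\Vol(M_n)\to+\infty$. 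The main obstacle is this last step, namely making precise that twisting along a short curve moves the point very little in the Weil--Petersson metric; I handle it through the estimate \eqref{eq:WPpinching} applied at both endpoints, which requires only the two elementary facts that a twist along $\alpha$ preserves $\ell_\alpha$ and that $h_n,h_n'$ limit to the same noded surface.
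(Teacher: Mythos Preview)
Your argument is correct and gives a genuinely different construction from the paper's. The paper keeps one endpoint $h$ fixed, lets \emph{all} curves of a pants decomposition $P$ shrink on the other endpoint $h_n'$, and appeals to the properness of the functional $F(h,\cdot)=\int_S\tr(b)\,d\mathrm A_h$ (an external result) to force the volume to diverge; the Weil--Petersson bound then comes from the fact that the fully noded stratum for $P$ is a single point $Z$, so the estimate \eqref{eq:WPpinching} applies unambiguously to both $h$ and $h_n'$ against the same $Z$.

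You instead move both endpoints, pinch a single curve $\alpha$, and earn the volume blow-up directly from Theorem~\ref{prop:comparison volume lams} via the explicit choice $w_n=n/\epsilon_n$. This is more self-contained (it uses only results proved in the paper) and yields the sharper conclusion $\dwp(h_n,h_n')\to 0$. The one point that deserves a word of care is your application of \eqref{eq:WPpinching}: since you pinch only one curve, the corresponding stratum has positive dimension, so you must make sure $h_n$ and $h_n'$ are close to the \emph{same} noded surface $Z$. Your observation handles this: an earthquake along $\alpha$ changes only the twist coordinate of $\alpha$, so $h_n$ and $h_n'$ share all other Fenchel--Nielsen coordinates and therefore converge, in the augmented Teichm\"uller space (equivalently, the WP completion), to the same $Z$; the pinching estimate \eqref{eq:WPpinching} along the Fenchel--Nielsen ray then bounds both $\dwp(h_n,Z)$ and $\dwp(h_n',Z)$ by $\sqrt{2\pi\epsilon_n}$.
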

\begin{proof}It is sufficient to exibit a sequence of maximal globally hyperbolic $\AdS^{3}$ manifolds $M_{n}=M_{h_{n}, h_{n}'}$ such that
\[
	\lim_{n\to +\infty}\Vol(M_{n}) =+\infty \ \ \ \ \ \ \  \text{but} \ \ \ \ \ \ \ \dwp(h_{n}, h_{n}')\leq C \ \ \ \ \forall n \in \N 
\]
for some constant $C>0$.

An example can be constructed as follows. Fix a hyperbolic metric $h \in \Teich(S)$ and a pants decomposition $P=\{\alpha_{1}, \ldots, \alpha_{3g-3}\}$ of $S$. Consider a sequence of hyperbolic metrics $h_{n}'$ obtained by letting the lengths of the curves $\alpha_{j}$ go to $0$ for every $j=1, \dots, 3g-3$. By construction, the sequence $h_{n}'$ leaves every compact subset in $\Teich(S)$ and it is converging to the noded Riemann surface $Z$ in the augmented Teichm\"uller space $\overline{\Teich(S)}$ where all the curves of the pants decomposition $P$ are pinched. Therefore, by Equation (\ref{eq:WPpinching}), 
\[
	\dwp(h,h_{n}')\leq \dwp(h,Z)+\dwp(h_{n}',Z) \leq C\ ,
\]
where $C=2\sqrt{2\pi\ell_P(h)}$. On the other hand, the volume of the $\AdS^{3}$ manifolds $M_{n}$ is diverging because
\[
	\Vol(M_{n})\geq \Vol(\Omega_{n})=\frac{\pi^{2}}{2}|\chi(S)|+\frac{1}{4}\int_{S}\tr(b_{n}) d\mathrm{A}_{h}= \frac{\pi^{2}}{2}|\chi(S)|+\frac{1}{4}F(h, h_{n}')
\]
and the functional $F(h, \cdot): \Teich(S) \rightarrow \R^{+}$ is proper (\cite[Proposition 1.2]{bms}).

Notice that we can actually make the constant $C$ arbitrarily small by choosing the metric $h$ appropriately.
\end{proof}

\subsection{A lower bound on the volume}
We can bound the volume of a maximal globally hyperbolic $\AdS^{3}$ manifold in terms of the Weil-Petersson distance between its left and right metric from below. 

\begin{reptheorem}{thm:stimaWP} Let $M_{h,h'}$ be a maximal globally hyperbolic $\AdS^{3}$ manifold. Then there exist some positive constants $a,b,c>0$  such that
\[
	\exp\left({\frac{a}{|\chi(S)|}\dwp(h,h')-b|\chi(S)|}\right)-c\leq \Vol(\mathcal C(M_{h,h'}))~.
\]
\end{reptheorem}


The proof relies on a precise estimate of the norm of the Weil-Petersson gradient of the length function, whose proof is postposed to the next section.

\begin{reptheorem}{thm:mainestimate} 
There exists a universal constant $a>0$ such that for every $\lambda \in \mathcal{ML}(S)$ and for every $h \in \Teich(S)$, we have
\begin{equation}\label{eq stima gradiente}
	\| \grad \ell_{\lambda}(h)\|_{\weil}\geq \frac{a}{|\chi(S)|}\ell_{\lambda}(h) \ .
\end{equation}
\end{reptheorem}

We will also need the following result by Bers (\cite{bers}, see also \cite[Theorem 5.13,5.14]{buser}):
\begin{theorem} \label{thm bers} Let $S$ be a closed surface of genus $g\geq 2$. For every hyperbolic metric $h \in \Teich(S)$ there is a pants decomposition $P$ such that $\ell_{\alpha}(h)<L_g$ for every $\alpha \in P$, {where $L_g=6\sqrt{3\pi}(g-1)=3\sqrt{3\pi}|\chi(S)|$. }
\end{theorem}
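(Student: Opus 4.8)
The plan is to prove this classical estimate (Bers' theorem, in Buser's quantitative form) by an inductive cutting procedure combined with an area-packing argument, working throughout in the category of compact hyperbolic surfaces with geodesic boundary and inducting on $|\chi|$. The guiding principle is that the only mechanism that can force long curves in a pants decomposition is the presence of deep thin parts; these can be removed at the outset by cutting along short geodesics, and what remains is ``thick'', with the feature that its diameter --- and hence the length of any curve needed to cut it further --- is controlled \emph{linearly} by the area $2\pi|\chi(S)|$.

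First I would dispose of the thin part. Fix a threshold $\epsilon_0>0$ below the two-dimensional Margulis constant, so that the Collar Lemma guarantees that any two distinct simple closed geodesics of length $\leq\epsilon_0$ are simple and pairwise disjoint; there are at most $3g-3$ of them and they form a multicurve. I add all of them to the decomposition (their lengths are far below $L_g$) and cut $S$ along them. Each resulting piece $Y$ then has interior systole $>\epsilon_0$ and geodesic boundary of length $\leq\epsilon_0$, because a closed geodesic in the interior of $Y$ is a closed geodesic of $S$ of the same length, hence would already have been cut had it been short.

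The core step is to bound the diameter of such a thick piece $Y$ linearly in its area. Around a maximal $\epsilon_0$-separated net in $Y$, the balls of radius $\epsilon_0/2$ are embedded and disjoint, and each has hyperbolic area $2\pi(\cosh(\epsilon_0/2)-1)$; since their total area is at most $\Area(Y)=2\pi|\chi(Y)|$, the net has at most a controlled number $N\lesssim|\chi(Y)|$ of points. Because all thin necks have already been removed, $Y$ contains no long thin tubes, so any two points are joined by a path threading a bounded-length chain of net balls, which gives $\diam(Y)\lesssim N\lesssim|\chi(Y)|\leq|\chi(S)|$. Completing $Y$ to a pants decomposition greedily, each new simple closed geodesic can be realized with length at most roughly twice $\diam(Y)$ plus a boundary contribution, hence again $O(|\chi(S)|)$. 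Assembling the short thin-part curves with these thick-part curves yields a full pants decomposition with all lengths bounded linearly in $g$.

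The hard part will be extracting the \emph{sharp} constant $L_g=6\sqrt{3\pi}(g-1)=3\sqrt{3\pi}|\chi(S)|$ rather than an unspecified linear bound. This requires running the area estimates above in their optimal form: the monotone area formula $2\pi(\cosh r-1)$ for embedded hyperbolic balls, the exact embedded-collar half-width $\arcsinh(1/\sinh(\ell/2))$ around a geodesic of length $\ell$, and a careful propagation of the accumulating boundary lengths through the induction so that they do not degrade the constant. Pinning down $6\sqrt{3\pi}$ is precisely the bookkeeping carried out in \cite{buser}, which I would follow verbatim; the conceptual mechanism --- short curves are automatically harmless, and the residual thick geometry has area-linear diameter --- is what makes the final bound linear in the genus.
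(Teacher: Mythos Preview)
The paper does not prove this statement at all; it is quoted as a known result with references to \cite{bers} and \cite[Theorem 5.13, 5.14]{buser}, and is used as a black box in the proof of Theorem~\ref{thm:stimaWP}. So there is no ``paper's own proof'' to compare against.

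Your sketch is a reasonable outline of the standard area-packing/inductive argument in Buser's book, and you correctly identify that the specific constant $6\sqrt{3\pi}(g-1)$ requires the careful bookkeeping carried out there rather than the soft diameter bound you describe. One caution: your diameter argument as stated is a bit loose --- bounding the number of net points by $|\chi(Y)|$ does not immediately bound the diameter linearly without knowing something about the combinatorics of the nerve of the covering (a priori the chain of balls could be long even if there are few of them; you need that the nerve is connected and then use that its number of edges is controlled). Buser's actual argument proceeds somewhat differently, constructing the short curves one at a time via explicit area comparisons for half-collars and hexagon decompositions rather than via a global diameter bound. Since you already defer to \cite{buser} for the sharp constant, this is fine, but be aware that the ``diameter $\lesssim$ area'' step is not quite as immediate as you present it.
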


We will refer to the constant $L_g$ as Bers' constant. Given $h\in \Teich(S)$ we can find a pants decomposition $P=\{\alpha_{1}, \dots, \alpha_{3g-3}\}$, such that $\ell_{\alpha_{j}}(h)<L_g$ for every $j=1, \dots 3g-3$. If we perturb the metric $h$ using an earthquake, we can estimate how the lengths of the curves $\alpha_{j}$ change only in terms of the Bers' constant and of the length of the lamination.  

\begin{lemma}\label{lm:stimalunghezze} Let $h,h'\in \Teich(S)$. Let $\lambda$ be the measured geodesic lamination such that $h'=E_{l}^{\lambda}(h)$. Fix a pants decomposition $P=\{\alpha_{1}, \ldots, \alpha_{3g-3}\}$ such that $\ell_{\alpha_{j}}(h)<L$ for every $j=1, \dots 3g-3$. Then there exists a constant $d(L)>0$ depending only on $L$ such that
\[
	\ell_{\alpha_{j}}(h')\leq L+\frac{\ell_{\lambda}(h)}{d(L)} \ ,
\]
for every $j=1, \ldots, 3g-3$. 
\end{lemma}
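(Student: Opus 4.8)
The plan is to combine the elementary length estimate for earthquakes already used in the proof of Proposition~\ref{prop lower bound th} with the Collar Lemma. The starting observation is that, exactly as in that proof, for any simple closed curve $\alpha$ on $S$ one has
\[
	\ell_{\alpha}(h')\leq \ell_{\alpha}(h)+\iota(\lambda,\alpha)~,
\]
obtained by comparing the $h'$-geodesic representative of $\alpha$ with the broken geodesic built from the $h$-geodesic representative of $\alpha$ by inserting, at each crossing with $\lambda$, an arc of $\lambda$ of length equal to the corresponding earthquake measure (and passing to general $\lambda\in\mathcal{ML}(S)$ by continuity of $\ell$ and $\iota$). Applying this to $\alpha=\alpha_{j}$ and using the hypothesis $\ell_{\alpha_{j}}(h)<L$ gives $\ell_{\alpha_{j}}(h')\leq L+\iota(\lambda,\alpha_{j})$, so the whole lemma reduces to producing a bound $\iota(\lambda,\alpha_{j})\leq \ell_{\lambda}(h)/d(L)$ with $d(L)>0$ depending only on $L$.

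For this bound I would use the Collar Lemma. Since $\alpha_{j}$ is a simple closed $h$-geodesic with $\ell_{\alpha_{j}}(h)<L$, and since $t\mapsto \arcsinh\bigl(1/\sinh(t/2)\bigr)$ is decreasing, $\alpha_{j}$ admits an embedded collar $C_{j}=\{x\in S:\ d_{h}(x,\alpha_{j})<w(L)\}$ with $w(L)=\arcsinh\bigl(1/\sinh(L/2)\bigr)$. Realize $\lambda$ as an $h$-geodesic lamination. A leaf of $\lambda$ meeting $\alpha_{j}$ transversally must traverse $C_{j}$ from one boundary component to the other near each such intersection: lifting to $\Hyp^{2}$, the distance to (the lift of) $\alpha_{j}$ along the leaf is a proper, strictly convex, $1$-Lipschitz function with a single zero at the crossing point, hence the leaf spends $h$-length at least $2w(L)$ inside $C_{j}$ per crossing. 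Integrating against the transverse measure of $\lambda$ — immediate when $\lambda$ is a weighted multicurve, where each of the $\iota(\lambda,\alpha_{j})$ crossing arcs has length at least $2w(L)$, and then extended to arbitrary $\lambda$ by density of multicurves in $\mathcal{ML}(S)$ together with continuity of $\ell$ and $\iota$ — the portion of $\lambda$ contained in $C_{j}$ has $h$-length at least $2w(L)\,\iota(\lambda,\alpha_{j})$. Since $C_{j}\subset S$, this yields $\ell_{\lambda}(h)\geq 2w(L)\,\iota(\lambda,\alpha_{j})$, i.e. $\iota(\lambda,\alpha_{j})\leq \ell_{\lambda}(h)/\bigl(2w(L)\bigr)$.

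Combining the two estimates gives $\ell_{\alpha_{j}}(h')\leq L+\ell_{\lambda}(h)/\bigl(2w(L)\bigr)$ for every $j$, which is the assertion with
\[
	d(L)=2\arcsinh\!\left(\frac{1}{\sinh(L/2)}\right)~.
\]
I do not expect a genuine obstacle in this argument. The two points requiring some care are: (i) the bookkeeping ensuring that the collar width can be taken to depend on $L$ alone and not on the individual lengths $\ell_{\alpha_{j}}(h)$, which is precisely the monotonicity of $w$; and (ii) making the "length of $\lambda$ inside $C_{j}$ is at least $2w(L)\,\iota(\lambda,\alpha_{j})$" estimate rigorous for a general measured lamination, which is standard once one reduces to weighted multicurves by density in $\mathcal{ML}(S)$ and invokes continuity of both $\ell$ and $\iota$. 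The precise value of the multiplicative constant in $d(L)$ is irrelevant for the use of the lemma; only its positivity and its dependence on $L$ alone matter.
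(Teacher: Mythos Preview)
Your proposal is correct and follows essentially the same route as the paper: first bound $\ell_{\alpha_j}(h')\leq \ell_{\alpha_j}(h)+\iota(\lambda,\alpha_j)$, then use the Collar Lemma to bound $\iota(\lambda,\alpha_j)$ by $\ell_\lambda(h)/d(L)$, checking the inequality on weighted simple closed curves and passing to general laminations by density and continuity. The only cosmetic difference is that the paper obtains the first inequality by integrating the bound on the first variation of length along the earthquake path rather than via the broken-geodesic comparison you cite, and the paper's stated constant is $d(L)=\arcsinh\bigl(1/\sinh(L/2)\bigr)$ rather than your $2\arcsinh\bigl(1/\sinh(L/2)\bigr)$; neither discrepancy affects the argument.
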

\begin{proof}It is well known (\cite{Nielsenproblem}) that the first variation of the length of a simple closed curve $\gamma$ 
along an earthquake path is given by the integral over $\gamma$ of the cosines of the angles formed by $\gamma$ with the lamination $\lambda$. As a consequence, 
\[
	\left|\left.\frac{d}{dt}\right|_{t=t_{0}}\ell_{\gamma}(E_{l}^{t\lambda}(h))\right|\leq \iota(\gamma,\lambda) \ ,
\]
for every $t_0$. Hence
\[
	|\ell_{\gamma}(h')-\ell_{\gamma}(h)|\leq \iota(\lambda, \gamma) ~.
\]
Therefore for every $j=1, \dots 3g-3$ we can give an upper-bound for the lengths of the curves $\alpha_{j}$:
\begin{equation}\label{eq:1a}
	\ell_{\alpha_{j}}(h')\leq \ell_{\alpha_{j}}(h)+\iota(\alpha_{j}, \lambda)\leq L+\iota(\alpha_{j}, \lambda) \ .
\end{equation}
We only need to estimate the intersection between the curves $\alpha_{j}$ and the lamination $\lambda$ in terms of the length of the lamination. We claim that
$$\ell_{\lambda}(h)\geq d\cdot\iota(\lambda, \alpha_{j})$$
for some constant $d=d(L)$. To prove the claim,
suppose first that $\lambda=(c,w)$ consists of a weighted simple closed geodesic. By the Collar Lemma, since $\ell_{h}(\alpha_{j})\leq L$, there exist disjoint tubular neighborhoods $T_{\alpha_j,d(L)}$ of the geodesics $\alpha_j$ of width 
\begin{equation} \label{eq ciao}
	d(L)=\arcsinh\left(\frac{1}{\sinh\left(\frac{L}{2}\right)}\right) \ .
\end{equation}
The intersection of $c$ with $T_{\alpha_j,d(L)}$ is the disjoint union of $\#(c \cap \alpha_{j})$ geodesic arcs of length at least $d(L)$.
We deduce that for every $j=1, \ldots, 3g-3$ we have
\begin{equation}\label{eq:2a}
	\ell_{\lambda}(h)=w\ell_{c}(h)\geq wd(L)\sum_{j=1}^{3g-3}\#(c \cap \alpha_{j}) = d(L)\iota(\lambda, \alpha_{j}) \ .
\end{equation}
The general case of the claim follows by a standard approximation argument using the well-known fact that weighted simple closed curves are dense in the space of measured geodesic laminations. The proof then follows by combining Equation (\ref{eq:1a}) and Equation (\ref{eq:2a}).
\end{proof}

Given a pants decomposition $P=\{\alpha_{1}, \dots, \alpha_{3g-3}\}$ and a real number $L>0$, we define
\[
   V_{L}(P)=\{ h \in \Teich(S) \ | \ \ell_{\alpha_{j}}(h)\leq L \ \text{for every} \ j=1, \dots, 3g-3\}
\]

\begin{prop}\cite[Proposition 2.2]{BrockWPconvexcore}\label{prop:bounddiam} For every pants decomposition, the set $V_{L}(P)$ has bounded diameter for the Weil-Petersson metric. More precisely, for every pants decomposition $P$ of $S$,
\[
	\diam_{\weil}(V_{L}(P))\leq 2\sqrt{2\pi L}~.
\]
\end{prop}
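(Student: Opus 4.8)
The plan is to exploit the incompleteness of the Weil--Petersson metric together with the pinching estimate \eqref{eq:WPpinching}: I will show that \emph{every} metric in $V_L(P)$ lies, in the Weil--Petersson metric, within a controlled distance of one and the same point of the augmented Teichm\"uller space $\overline{\Teich(S)}$ --- namely the totally degenerate noded Riemann surface $Z$ obtained by collapsing all the curves of $P$ --- and then close up by the triangle inequality.

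The first and most delicate step is to produce this common point $Z$ and to check that it is legitimate. Since $P=\{\alpha_{1},\dots,\alpha_{3g-3}\}$ is a \emph{complete} pants decomposition, the noded Riemann surface obtained by pinching all the $\alpha_{j}$ is a disjoint union of $2g-2$ thrice-punctured spheres. A thrice-punctured sphere carries a unique complete hyperbolic metric, so the stratum of $\overline{\Teich(S)}$ in which all curves of $P$ are pinched reduces to the single point $Z$; in particular $Z$ does not depend on the starting metric $h\in V_L(P)$. Here I would need to make sure that $Z$ genuinely lies in the Weil--Petersson completion $\overline{\Teich(S)}$ (as recalled in the paper, via \cite{Masur-WPcompletion}) and that \eqref{eq:WPpinching} may be invoked for the \emph{simultaneous} degeneration of the whole of $P$, not for pinching one curve at a time --- this is exactly what guarantees that the distance estimates for different starting metrics share the common endpoint $Z$. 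I expect the main (essentially bookkeeping) care to be needed at this point.

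With $Z$ in hand, the rest is short. For any $h\in V_L(P)$, the estimate \eqref{eq:WPpinching} applied to $\alpha_{1},\dots,\alpha_{3g-3}$ gives $\dwp(h,Z)\le\sqrt{2\pi\,\ell}$ with $\ell=\sum_{j}\ell_{\alpha_{j}}(h)$; since $\ell_{\alpha_{j}}(h)\le L$ for every $j$, this is bounded purely in terms of $L$ and the topology of $S$ (and reads exactly $\sqrt{2\pi L}$ if one applies \eqref{eq:WPpinching} to the multicurve as a whole). Then, because $\dwp$ on $\Teich(S)$ coincides with the restriction of the complete distance on $\overline{\Teich(S)}$, for any $h_{1},h_{2}\in V_L(P)$ the triangle inequality yields
\[
	\dwp(h_{1},h_{2})\le\dwp(h_{1},Z)+\dwp(Z,h_{2})\le 2\sqrt{2\pi L}\,,
\]
and taking the supremum over $h_{1},h_{2}\in V_L(P)$ gives the asserted bound on $\diam_{\weil}(V_L(P))$.

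As an alternative to the completion argument, one could instead integrate Wolpert's asymptotic expansion of the Weil--Petersson metric in plumbing/Fenchel--Nielsen coordinates near the stratum determined by $P$, bounding the length of an explicit path that first shortens all the $\alpha_{j}$ and then adjusts the twist parameters; but the argument through $Z$ is cleaner and uses only facts already recalled in the paper.
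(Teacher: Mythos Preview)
The paper does not supply its own proof of this proposition; it is quoted from \cite{BrockWPconvexcore} and used as a black box. Your argument---pinch every curve of $P$ down to the unique maximally degenerate noded surface $Z$ (a disjoint union of thrice-punctured spheres, which carries no moduli), bound $\dwp(h,Z)$ via \eqref{eq:WPpinching}, and close with the triangle inequality in $\overline{\Teich(S)}$---is precisely the standard proof and is correct in structure.

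One numerical point deserves care. The pinching estimate \eqref{eq:WPpinching} as stated in the paper involves $\ell=\sum_{j}\ell_{\alpha_j}(h)$, the \emph{sum} of the lengths of the pinched curves. For $h\in V_L(P)$ with a full pants decomposition one gets $\ell\le(3g-3)L$, hence $\dwp(h,Z)\le\sqrt{2\pi(3g-3)L}$ and $\diam_{\weil}(V_L(P))\le 2\sqrt{2\pi(3g-3)L}$, not $2\sqrt{2\pi L}$. Your parenthetical ``reads exactly $\sqrt{2\pi L}$ if one applies \eqref{eq:WPpinching} to the multicurve as a whole'' is therefore not justified by \eqref{eq:WPpinching} as written; obtaining the sharper constant would require a finer pinching estimate than the one quoted here. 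This discrepancy is harmless for the application in Theorem~\ref{thm:stimaWP}, since the constants $a,b,c$ are not made explicit and $L_g$ already depends on the genus, but you should either relax the constant in the conclusion or cite the sharper form of Wolpert's estimate that gives the genus-free bound.
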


We can estimate the Weil-Petersson distance between points lying in different level sets $V_{m}(P)$.
\begin{prop}\label{prop:boundWPdistance}Let $h_{0} \in V_{m}(P)$, for some $m>L$. Then
\[
	\dwp(h_{0}, V_{L}(P)) \leq \frac{|\chi(S)|}{a}\log\left(\frac{m(3g-3)}{L}\right) \ ,
\]
where $a$ is the constant provided by Theorem \ref{thm:mainestimate}.
\end{prop}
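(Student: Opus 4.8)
The plan is to move from $h_0$ to a point of $V_L(P)$ along the normalized Weil--Petersson gradient flow of the total length of the pants curves, and to bound the length of the resulting path via the differential inequality of Theorem~\ref{thm:mainestimate}. I would set $\lambda_P$ to be the measured geodesic lamination consisting of the curves $\alpha_1,\dots,\alpha_{3g-3}$ of $P$ each with unit weight, so that $\ell_{\lambda_P}=\sum_{j=1}^{3g-3}\ell_{\alpha_j}$; since $h_0\in V_m(P)$ one has $\ell_{\lambda_P}(h_0)\le m(3g-3)$. Because $\ell_{\lambda_P}$ is strictly convex along Weil--Petersson geodesics and has infimum $0$ which is never attained on $\Teich(S)$, its Weil--Petersson gradient never vanishes, so $X=-\grad\ell_{\lambda_P}/\|\grad\ell_{\lambda_P}\|_{\weil}$ is a well-defined smooth vector field of unit Weil--Petersson norm; let $\phi_t$ denote its flow.

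Next I would run the flow and apply Gr\"onwall. Along $\phi_t(h_0)$,
\[
\frac{d}{dt}\,\ell_{\lambda_P}(\phi_t(h_0))=\langle\grad\ell_{\lambda_P},X\rangle_{\weil}=-\|\grad\ell_{\lambda_P}\|_{\weil}\le-\frac{a}{|\chi(S)|}\,\ell_{\lambda_P}(\phi_t(h_0)),
\]
the last inequality being Theorem~\ref{thm:mainestimate}. Hence $\ell_{\lambda_P}(\phi_t(h_0))\le m(3g-3)\exp\!\big(-\tfrac{a}{|\chi(S)|}\,t\big)$, so at time $t_0=\frac{|\chi(S)|}{a}\log\!\big(\frac{m(3g-3)}{L}\big)$ we get $\ell_{\lambda_P}(\phi_{t_0}(h_0))\le L$, and a fortiori $\ell_{\alpha_j}(\phi_{t_0}(h_0))\le L$ for each $j$, i.e. $\phi_{t_0}(h_0)\in V_L(P)$. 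Since $X$ has unit norm, the path $t\mapsto\phi_t(h_0)$ on $[0,t_0]$ has Weil--Petersson length exactly $t_0$, so $\dwp(h_0,V_L(P))\le t_0$, as claimed. (Together with Proposition~\ref{prop:bounddiam} this is what is needed for Theorem~\ref{thm:stimaWP}.)

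The hard part will be justifying that the flow is defined up to time $t_0$. The Weil--Petersson metric is incomplete, so $X$ need not be complete and the trajectory may leave $\Teich(S)$ at some $T<t_0$, converging in the augmented space $\overline{\Teich(S)}$ to a noded surface $Z$. Since $\ell_{\lambda_P}$ is nonincreasing along the flow, no curve crossing $P$ can be pinched (the Collar Lemma forces such a pinching to blow up some $\ell_{\alpha_j}$), so the nodes of $Z$ form a sub-multicurve $Q\subseteq P$. If $Q=P$, all $\ell_{\alpha_j}$ vanish at $Z$, so $Z\in\overline{V_L(P)}$ and $\dwp(h_0,V_L(P))=\dwp(h_0,\overline{V_L(P)})\le T<t_0$. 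If $Q\subsetneq P$, I would continue the gradient flow of $\ell_{\lambda_{P\setminus Q}}$ inside the stratum $\cT_Q\subseteq\overline{\Teich(S)}$, which is totally geodesic for the Weil--Petersson metric and splits as a product of Teichm\"uller spaces of the (finite-type) pieces of $S$ cut along $Q$. On each piece Theorem~\ref{thm:mainestimate} applies, and since the pieces have total Euler characteristic equal to $\chi(S)$, a Cauchy--Schwarz estimate on the product metric shows the combined gradient still satisfies $\|\grad\ell_{\lambda_{P\setminus Q}}\|_{\weil}\ge\frac{a}{|\chi(S)|}\,\ell_{\lambda_{P\setminus Q}}$. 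Concatenating, one obtains an almost-everywhere unit-speed path starting at $h_0$ along which $\ell_{\lambda_P}$, extended continuously to $\overline{\Teich(S)}$ with pinched curves contributing $0$, still decays at exponential rate at least $a/|\chi(S)|$; after at most $3g-3$ descents to smaller strata it reaches $\overline{V_L(P)}$ with total length $\le t_0$. As distance to $V_L(P)$ equals distance to its closure, the bound $\dwp(h_0,V_L(P))\le t_0$ holds in every case; this boundary bookkeeping is the only genuinely delicate point, the interior part being the Gr\"onwall computation above.
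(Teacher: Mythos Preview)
Your core argument is essentially identical to the paper's: flow along the unit vector field $X=-\grad\ell_{\lambda_P}/\|\grad\ell_{\lambda_P}\|_{\weil}$ (the paper writes it as $-\grad(\log\ell_P)/\|\grad(\log\ell_P)\|_{\weil}$, which is the same vector field), apply Theorem~\ref{thm:mainestimate} to get the differential inequality, and read off the time needed to reach $\{\ell_{\lambda_P}\le L\}\subseteq V_L(P)$.

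Where you go beyond the paper is in worrying about the incompleteness of the Weil--Petersson metric; the paper's own proof simply asserts that the integral curve reaches $V_L(P)$ at some $t_0$ and does not discuss whether the flow might escape $\Teich(S)$ first. Your treatment of this point is sensible: the path is unit-speed hence Cauchy, so it limits to some $Z\in\overline{\Teich(S)}$; boundedness of $\ell_{\lambda_P}$ along the flow together with the Collar Lemma forces the nodes of $Z$ to lie in $P$; and one then continues in the corresponding stratum. The one place your sketch is not self-contained is the claim that Theorem~\ref{thm:mainestimate} persists on the strata, i.e.\ for finite-type pieces with cusps---the paper only states and proves it for closed surfaces. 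This is plausible (Riera's formula, on which the estimate rests, is valid more generally), and your Cauchy--Schwarz reduction to the closed constant $a/|\chi(S)|$ is correct once the piecewise estimate is granted, but strictly speaking it requires an extension of Theorem~\ref{thm:mainestimate} not supplied in the paper. In short: your proof matches the paper's on the substantive step and is more careful on a point the paper leaves implicit.
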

\begin{proof} Let us denote with $\ell_P:\Teich(S) \rightarrow \R^{+}$ the function
\[
	\ell_{P}(h)=\sum_{i=1}^{3g-3}\ell_{\alpha_{i}}(h)
\]
which computes the total length of the curves $\alpha_{i}$ in the pants decomposition $P$.  In the above notation, $P$ is considered as a measured lamination, composed of the multicurve $\alpha_1,\ldots,\alpha_{3g-3}$, each with unit weight. By Theorem \ref{thm:mainestimate} we have
\[
	\| \grad \ell_{P}\|_{\weil}\geq \frac{a}{|\chi(S)|}\ell_{P}, 
\]
thus
\[
	\|\grad (\log\ell_P)\|_{\weil} \geq \frac{a}{|\chi(S)|}\ .
\]
Let $X$ be the vector field on $\Teich(S)$ defined by
\[
	X=-\frac{\grad (\log\ell_P)}{\|\grad (\log\ell_P)\|_{\weil}}
\]
and let $\gamma$ be an integral curve of $X$ such that $\gamma(0)=h_{0}$. By the previous estimates, the function $\phi(t)=(\log\ell_P)(\gamma(t))$ satisfies the differential equation
\[
	\phi'(t)=\langle \grad (\log\ell_P), \gamma'(t) \rangle_{\weil}=-\|\grad (\log\ell_P)\|_{\weil} \leq -\frac{a}{|\chi(S)|}\ .
\]
We deduce that
\[
	\phi(t)\leq \phi(0)-\frac{at}{|\chi(S)|}\leq \log(m(3g-3))-\frac{at}{|\chi(S)|}~,
\]
and that the curve $\gamma(t)$ intersects the set $V_{L}(P)$ after a time
\[
	t_{0}\leq \frac{|\chi(S)|}{a}\log\left(\frac{m(3g-3)}{L}\right) \ ,
\]
which implies the claim.
\end{proof}

We have now all the ingredients to prove Theorem \ref{thm:stimaWP}:
\begin{proof}[Proof of Theorem \ref{thm:stimaWP}] 
Let $h$ be a hyperbolic metric on $S$ and $h'=E_{l}^{\lambda}(h)$. Fix a pants decomposition $P$ such that $h \in V_{L_g}(P)$, {where $L_g$ is as in the statement of Theorem \ref{thm bers}}. By Proposition \ref{prop:bounddiam} and Proposition \ref{prop:boundWPdistance} we have

\begin{align*}
	\dwp(h,h')&\leq \dwp(h', V_{L_g}(P))+\diam_{\weil}V_{L_g}(P) \\
	&\leq \frac{|\chi(S)|}{a}\log\left(\frac{m(3g-3)}{L_g}\right)+2\sqrt{2\pi L_g} \\
	& \leq \frac{|\chi(S)|}{a}\log\left(\frac{m}{2\sqrt{3\pi}}\right)+2\sqrt{2\pi L_g} ~,
\end{align*}
for some $m \in \R$ such that $h' \in V_{m}(P)$. By Lemma \ref{lm:stimalunghezze}, we can choose $m$ such that
\[
	m\leq L_g+\frac{\ell_{\lambda}(h)}{d(L_g)} ~.
\]
Hence
\[
	2\sqrt{3\pi}{d(L_g)}\exp\left({\frac{a}{|\chi(S)|}(\dwp(h,h')-2\sqrt{2\pi L_g})}\right)-d(L_g)L_g\leq \ell_{\lambda}(h)~.
\]
Now from Equation \eqref{eq ciao}, 
$$\exp(-\delta-2\sqrt{3\pi}(g-1))\leq d(L_g)\leq \exp(-2\sqrt{3\pi}(g-1))$$
for some constant $\delta$, and thus (using again the definition of $L_g$)
$$2\sqrt{3\pi}{d(L_g)}\exp\left({\frac{a}{|\chi(S)|}\dwp(h,h')-2\sqrt{2\pi L_g})}\right)\geq \exp\left({\frac{a}{|\chi(S)|}\dwp(h,h')}-b|\chi(S)|\right)~,$$
for some constant $b>0$. In conclusion, since $d(L_g)L_g\to 0$ as $g\to\infty$, there is a constant $c>0$ such that
\[
	\exp\left({\frac{a}{|\chi(S)|}\dwp(h,h')-b|\chi(S)|}\right)-c\leq \ell_{\lambda}(h)~.
\]
The main statement of Theorem \ref{thm:stimaWP} then follows by applying Theorem \ref{prop:comparison volume lams}, up to changing the constants $b$ and $c$.
\end{proof}

\section{Gradient of length function}\label{sec:stimagradiente}
This section is devoted to the proof of Theorem \ref{thm:mainestimate}, which we recall here:

\begin{reptheorem}{thm:mainestimate} There exists a universal constant $a>0$ such that for every $\lambda \in \mathcal{ML}(S)$ and for every $h \in \Teich(S)$, the following estimate holds:
\begin{equation}\label{eq stima gradiente}
	\| \grad \ell_{\lambda}(h)\|_{\weil}\geq \frac{a}{|\chi(S)|}\ell_{\lambda}(h) \ .
\end{equation}
\end{reptheorem}

\indent First, it suffices to prove the inequality \eqref{eq stima gradiente} when $\lambda$ is a simple closed curve (with weight 1). In fact, the inequality \eqref{eq stima gradiente} is homogeneous with respect to multiplication of $\lambda$ by some positive scalar. Hence if \eqref{eq stima gradiente} holds for a simple closed curve $(c,1)$, then it holds for every $(c,w)$, where $w>0$ is any weight. In this case, the inequality then holds also for every measured geodesic lamination, since weighted simple closed curves are dense in $\mathcal{ML}(S)$, and both sides of the inequality vary with continuity.\\

\indent As an initial remark, let us notice that a lower bound of the form $\| \grad \ell_{\lambda}(h)\|_{\weil}\geq C\ell_{\lambda}(h)$ clearly holds if we restrict to the thick part of Teichm\"uller space. Namely, the function
\begin{align*} 
	g: \Teich(S)\times (\mathcal{ML}(S)\setminus \{0\}) &\rightarrow \R^{+}\\
		(h,\lambda) &\mapsto \frac{\| \grad\ell_{\lambda}(h)\|^{2}_{\weil}}{\ell^{2}_{\lambda}(h)} 
\end{align*}
is invariant under the action of the Mapping Class Group and under rescaling of the measure of $\lambda$, hence if restricted to
\[
	\Teich_{\epsilon_{0}}(S)\times (\mathcal{ML}(S)\setminus \{0\})=\{ h \in \Teich(S) \ | \ \injrad(h)\geq \epsilon_{0}\}\times (\mathcal{ML}(S) \setminus \{0\})
\]
it admits a minimum, since $\Teich_{\epsilon_{0}}(S)$ projects to a compact set in the moduli space
\[
	\mathcal{M}(S)=\Teich(S)/MCG(S) \ 
\]
and the quotient $(\mathcal{ML}(S)\setminus \{0\}) /\R^{+}$ is compact. Such argument does not provide the explicit dependence of the constant $C$ in terms of the genus of $S$, which is instead included in Theorem \ref{thm:mainestimate}. In this section we will provide a proof of Theorem \ref{thm:mainestimate} in full generality. \\

The remark above, however, motivates the fact that main difficulty will arise when dealing with hyperbolic metrics with small injectivity radius.
Let us recall that it is possible to choose a (small) constant $\epsilon_0$, such that on any hyperbolic surface $(S,h)$ of genus $g$, there are at most $3g-3$ simple closed geodesics of length at most $\epsilon_0$. We will fix such $\epsilon_{0}$ later on. Notice that any $\epsilon_0\leq 2\arcsinh(1)$ works.
By the Collar Lemma, for every simple closed geodesic $\alpha$ of length $\epsilon$, the \emph{tube}
\begin{equation} \label{eq defi tubes}
T_{\alpha,d}=\{x\in (S,h)\,|\,d_h(x,\alpha)\leq d\}~,
\end{equation}
is an embedded cylinder for any $d\leq d(\epsilon)$, where 
\begin{equation} \label{eq defi d epsilon}
d(\epsilon):=\arcsinh\left(\frac{1}{\sinh(\frac{\epsilon}{2})}\right)~.\end{equation}
Moreover, if $\alpha_1,\ldots,\alpha_{3g-3}$ are pairwise disjoint, then $T_{\alpha_1,d(\epsilon_1)},\ldots,T_{\alpha_{3g-3},d(\epsilon_{3g-3})}$ are pairwise disjoint.
 Hence we obtain a thin-thick decomposition of any hyperbolic surface $(S,h)$, that is, we have
\[
	S=S^\thin_h\cup S^\thick_h~
\]
where
\begin{equation} \label{eq:thin}
	S^\thin_h=\bigcup_i T_{\alpha_i,d(\epsilon_i)}~,
\end{equation}
where the union is over all simple closed geodesics $\alpha_i$ of length $\epsilon_i\leq\epsilon_0$, and  
\begin{equation} \label{eq:thick}
	S^\thick_h=S\setminus S^\thin_h~.
\end{equation}
It then turns out that the injectivity radius at every point $x\in S^\thick_h$ is at least $\epsilon_0/2$.
 
\subsection{Riera's formula}
We are going to prove the inequality of Equation \eqref{eq stima gradiente} for a simple closed curve $c$ on $(S,h)$. If we denote by $\gamma$ the $h$-geodesic representative of $c$, we will prove the inequality first in the case 
\[
	\mathrm{length}_{h}(\gamma\cap S^\thin_h)\leq \mathrm{length}_{h}(\gamma\cap S^\thick_h)~,
\]
and then in the opposite case, provided $\epsilon_{0}$ is small enough. In both cases, a key tool will be the following theorem. This was proved by Riera in \cite{riera} in a more general setting; the statement below is specialized to the case of closed surfaces.

\begin{theorem} \label{thm riera}
Given a closed hyperbolic surface $(S,h)$, let us fix a metric universal cover $\pi:\Hyp^2\to (S,h)$, which thus identifies $\pi_1(S)$ to a Fuchsian subgroup of $\isom(\Hyp^2)$. Given a simple closed curve $c$ in $S$, let $C \in \pi_1(S)$ be an element freely homotopic to $c$. Then
\begin{equation}\label{eq:formulaWolpert}
	\| \grad\ell_{c}(h)\|_{\weil}^{2}=\frac{2}{\pi}\ell_{c}(h)+\frac{2}{\pi}\sum_{\substack{D\in\langle C\rangle\! \textbackslash\! \pi_1(S)\! /\!\langle C\rangle \\ D\neq [\mathrm{id}]} } \left(u(D)\log\left(\frac{u(D)+1}{u(D)-1}\right)-2\right) \ ,
\end{equation}
where for $D\in \langle C\rangle \textbackslash \pi_1(S) /\langle C\rangle$ (not in the double coset of the identity) the function $u$ is defined as
\begin{equation} \label{eq defi u riera}
	u(D)=\cosh (d(\mathrm{Axis}(C),\mathrm{Axis}(DCD^{-1}))~.
\end{equation}
\end{theorem}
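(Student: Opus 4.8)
The plan is to obtain Riera's formula as the evaluation of a Petersson norm after unfolding a relative Poincaré series. First I would normalize the universal cover so that $\mathrm{Axis}(C)$ is the imaginary axis and $C(z)=e^{\ell}z$ with $\ell=\ell_c(h)$, and write $\omega=dz^2/z^2$ for the $\langle C\rangle$-invariant quadratic differential on the cylinder $\langle C\rangle\backslash\Hyp^2$. By Gardiner's variational formula for the geodesic length function, the differential $d\ell_c$ is represented, under the residue pairing $\langle\mu,\phi\rangle=\int\mu\phi$, by the quadratic differential $\Theta_c=\frac{2}{\pi}\sum_{A\in\langle C\rangle\backslash\pi_1(S)}A^*\omega$, which converges absolutely and descends to $S$. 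Since the cotangent space $QD(X)\cong T^*_{[X]}\Teich(S)$ with the Petersson norm is isometrically dual to $(T_{[X]}\Teich(S),\langle\cdot,\cdot\rangle_\weil)$, I would reduce the statement to the identity $\|\grad\ell_c(h)\|_\weil^2=\|\Theta_c\|_{\mathrm{Pet}}^2$, turning everything into the computation of $\int_S|\Theta_c|^2/\rho^2$, where $\rho$ is the hyperbolic area density; all normalization constants will be pinned down a posteriori by matching the diagonal term.

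The next step is the unfolding. Expanding one factor of $\Theta_c$ and using that $\Theta_c$ and $\rho$ are $\pi_1(S)$-invariant, the integral over $S$ collapses to $\int_{\langle C\rangle\backslash\Hyp^2}\omega\,\overline{\Theta_c}/\rho^2$ over the cylinder. Re-expanding $\Theta_c$ and grouping the left cosets $\langle C\rangle\backslash\pi_1(S)$ by the double cosets $\langle C\rangle\backslash\pi_1(S)/\langle C\rangle$, each non-trivial double coset $\langle C\rangle D\langle C\rangle$ contributes $\sum_{k}\int_{\langle C\rangle\backslash\Hyp^2}(DC^k)^*\omega\,\bar\omega/\rho^2$; a second unfolding of the inner $\langle C\rangle$-sum (using $(DC^k)^*\omega=(C^k)^*D^*\omega$ and the $\langle C\rangle$-invariance of $\bar\omega/\rho^2$) turns this into the full-plane integral $I(D)=\int_{\Hyp^2}(D^*\omega)\,\bar\omega/\rho^2$. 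The identity double coset has trivial residual stabilizer and instead yields the single cylinder integral $\int_{\langle C\rangle\backslash\Hyp^2}|\omega|^2/\rho^2$; in polar coordinates on the annular fundamental domain $\{\,1\le|z|\le e^\ell\,\}$ this is $\frac{\pi}{2}\ell$, which after the factor $\frac{4}{\pi^2}$ coming from $\Theta_c$ produces exactly the main term $\frac{2}{\pi}\ell_c(h)$.

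It then remains to evaluate the off-diagonal integrals $I(D)$. Here I would use that $D^*\omega$ is the normalized quadratic differential with double poles at the endpoints $D^{-1}(0),D^{-1}(\infty)$ of $\mathrm{Axis}(D^{-1}CD)$, together with the crucial fact that, since $c$ is \emph{simple}, all lifts of its geodesic representative are pairwise disjoint; hence $\mathrm{Axis}(C)$ and $\mathrm{Axis}(D^{-1}CD)$ are disjoint whenever $D$ lies outside the identity double coset, so their common perpendicular has positive length $d=d(\mathrm{Axis}(C),\mathrm{Axis}(DCD^{-1}))$ and $u(D)=\cosh d>1$. Because $\int_{\Hyp^2}\phi_1\bar\phi_2/\rho^2$ is invariant under $\isom(\Hyp^2)$, $I(D)$ depends only on $d$, so I may place the two axes symmetrically as $\{\,|z|=e^{\mp d/2}\,\}$, giving seeds $\phi_1=4e^{-d}(z^2-e^{-d})^{-2}$ and $\phi_2=4e^{d}(z^2-e^{d})^{-2}$ and reducing $I(D)$ to an explicit elementary integral over $\Hyp^2$. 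Evaluating it (for instance by an $x$-integration via residues followed by a $y$-integration) should give $I(D)=\frac{\pi}{2}\bigl(u\log\frac{u+1}{u-1}-2\bigr)$, and summing over double cosets against $\frac{4}{\pi^2}$ reproduces the claimed formula.

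The hard part will be precisely this last explicit evaluation: one must carry out the two-variable integral cleanly and check that it equals exactly $u\log\frac{u+1}{u-1}-2$ with the correct constant, consistent with the normalization fixed by the diagonal term. Two subsidiary points also need care: justifying the absolute convergence of the Poincaré series and the interchange of summation and integration in the two unfoldings (valid in genus $\ge 2$ since $\ell_c>0$ and $\Theta_c$ is an $L^2$ quadratic differential), and verifying that the symmetrizing isometry acts compatibly on the two seed differentials so that $I(D)$ genuinely depends on $u(D)$ alone. Finally, the appearance of $DCD^{-1}$ rather than $D^{-1}CD$ in the statement is harmless, because $D\mapsto D^{-1}$ permutes the non-trivial double cosets and leaves the distance between the two axes unchanged.
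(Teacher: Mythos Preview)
The paper does not prove this theorem at all: it is stated as Theorem~\ref{thm riera} with the attribution ``This was proved by Riera in \cite{riera} in a more general setting; the statement below is specialized to the case of closed surfaces,'' and is then used as a black box in Lemmas~\ref{lemma grad arcs} and~\ref{lemma estimate from riera}. So there is no ``paper's own proof'' to compare against.

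That said, your proposal is a faithful outline of Riera's original argument: represent $d\ell_c$ via Gardiner's variational formula as the relative Poincar\'e series $\Theta_c=\frac{2}{\pi}\sum_{A\in\langle C\rangle\backslash\pi_1(S)}A^*\omega$, identify $\|\grad\ell_c\|_\weil^2$ with the Petersson norm of $\Theta_c$, unfold once to the cylinder and then regroup by double cosets, and reduce each off-diagonal contribution to a single $\isom(\Hyp^2)$-invariant integral $I(D)$ depending only on $u(D)=\cosh d(\mathrm{Axis}(C),\mathrm{Axis}(DCD^{-1}))$. You correctly flag that simplicity of $c$ is exactly what guarantees $u(D)>1$ so that the summand is finite and positive, and you are honest that the explicit evaluation $I(D)=\frac{\pi}{2}\bigl(u\log\frac{u+1}{u-1}-2\bigr)$ is where the actual work lies. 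The two points you list as needing care (absolute convergence justifying the unfoldings, and the invariance reducing $I(D)$ to a function of $u$) are the right ones, and both are routine in genus $\ge 2$. Nothing in your sketch is wrong; it simply goes well beyond what the paper itself does, which is to quote the result.
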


First of all, observe that the function $u$ in Equation \eqref{eq defi u riera} is well-defined, since if $D'=ADB$ for $A,B\in\langle C\rangle$, then 
\begin{equation} \label{eq well-defined axis}
\mathrm{Axis}(D'C{D'}^{-1})=\mathrm{Axis}(ADCD^{-1}A^{-1})~.
\end{equation}
Thus
$$d(\mathrm{Axis}(C),\mathrm{Axis}(DCD^{-1})=d(\mathrm{Axis}(C),\mathrm{Axis}(D'C{D'}^{-1})~,$$
since $A$ stabilizes the axis of $C$.

Another equivalent way to express the summation in Equation \eqref{eq:formulaWolpert} is the following. Let $\gamma$ be the $h$-geodesic representative of $c$ in $S$. Let $\mathcal G(\Hyp^2)$ be the set of (unoriented) geodesics of $\Hyp^2$ and let 
\begin{equation} \label{eq defi A}
	\mathcal A=\{(\tilde \gamma_1,\tilde \gamma_2)\in\mathcal G(\Hyp^2)\times\mathcal G(\Hyp^2):\pi(\tilde \gamma_1)=\pi(\tilde \gamma_2)=\gamma\}/\pi_1(S)~,
\end{equation}
where $\pi_1(S)$ acts diagonally on pairs $(\tilde \gamma_1,\tilde \gamma_2)$.

The set $\mathcal A$ is in bijection with $\langle C\rangle \textbackslash \pi_1(S) /\langle C\rangle$, by means of the function:
$$[D]\mapsto (\mathrm{Axis}(C),\mathrm{Axis}(DCD^{-1}))~,$$
which is well-defined since, if  $D'=ADB$ for $A,B\in\langle C\rangle$, then from Equation \eqref{eq well-defined axis}, 
$$(\mathrm{Axis}(C),\mathrm{Axis}(D'CD'^{-1}))=A\cdot (\mathrm{Axis}(C),\mathrm{Axis}(DCD^{-1}))~.$$
The map is easily seen to be surjective since for every pair of geodesics $(\tilde \gamma_1,\tilde \gamma_2)$ projecting to $\gamma$, up to composing with an element in $\pi_1(S)$ one can find a representative with $\tilde \gamma_1=\mathrm{Axis}(C)$. Finally, it is injective since, supposing
$$(\mathrm{Axis}(C),\mathrm{Axis}(DCD^{-1}))=A\cdot(\mathrm{Axis}(C),\mathrm{Axis}(D'CD'^{-1}))~,$$
this implies that $A$ stabilizes $\mathrm{Axis}(C)$ (namely, $A\in\langle C\rangle)$ and that 
$$\mathrm{Axis}(DCD^{-1})=\mathrm{Axis}(AD'CD'^{-1}A^{-1})~,$$
that is, $D^{-1}AD'\in \mathrm{Stab}(\mathrm{Axis}(C))=\langle C\rangle$ and therefore $D=AD'B^{-1}$ for some $B\in\langle C\rangle$.

Let us now observe that there is a well defined function 
$$\mathfrak u:\mathcal A\to [1,+\infty)$$
 such that $\mathfrak u[\tilde \gamma_1,\tilde \gamma_2]=\cosh d(\tilde \gamma_1,\tilde \gamma_2)$. Moreover the bijection between $\langle C\rangle \textbackslash \pi_1(S) /\langle C\rangle$ and $\mathcal A$ transforms $u$ in $\mathfrak u$.
 In conclusion, the summation of  Equation \eqref{eq:formulaWolpert} is equal to:
\begin{equation} \label{eq riera 2}
	\| \grad\ell_{c}(h)\|_{\weil}^{2}=\frac{2}{\pi}\ell_{c}(h)+\frac{2}{\pi}\sum_{\Gamma\in\mathcal A\setminus \Delta} \left(\mathfrak u(\Gamma)\log\left(\frac{\mathfrak u(\Gamma)+1}{\mathfrak u(\Gamma)-1}\right)-2\right) ~.
\end{equation}
where $\Delta\in\mathcal A$ denotes the class of $(\tilde \gamma_1,\tilde \gamma_1)$.

\subsection{Estimates in the thick part of the hyperbolic surface}

Let us begin with the case in which $\mathrm{length}_{h}(\gamma\cap S^\thin_h)\leq \mathrm{length}_{h}(\gamma\cap S^\thick_h)$. In this case, the proof will use the following preliminary lemma:

\begin{lemma} \label{lemma grad arcs}
There exist $\epsilon_0>0$ small enough and $n_0>0$ large enough such that, for every choice of:
\begin{itemize}
\item A hyperbolic metric $h$ on a closed orientable surface $S$;
\item A number $\delta>0$;
\item An embedded $h$-geodesic arc $\alpha$ of length at most $\epsilon_0$, such that the $\delta$-neighborhood of $\alpha$ is embedded;
\item A simple closed curve $c$, whose $h$-geodesic representative $\gamma$ intersects $\alpha$ at least $n_0$ times;
\end{itemize}
one has:
$$||\grad\ell_c(h)||^2_\weil\geq C\delta (\#(\alpha\cap\gamma))^2~,$$
for some constant $C=C(n_0)$ (independent on the genus of $S$). 
\end{lemma}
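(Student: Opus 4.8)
\emph{Strategy.} The plan is to apply Riera's formula (Theorem~\ref{thm riera}, in the form~\eqref{eq riera 2}) and to produce, from the geometry of the embedded $\delta$-tube around $\alpha$, a quadratic-in-$\#(\alpha\cap\gamma)$ family of terms in the sum on its right-hand side, each bounded below by a positive quantity depending on $\delta$. Write $N=\#(\alpha\cap\gamma)$ and, for $u>1$, put $f(u)=u\log\!\big(\tfrac{u+1}{u-1}\big)-2$; recall that $f$ is positive and strictly decreasing on $(1,+\infty)$, with $f(u)\to+\infty$ as $u\to1^{+}$ and $f(u)\to0$ as $u\to+\infty$. Since $c$ is simple, all lifts of $\gamma$ to $\Hyp^{2}$ are pairwise disjoint, so~\eqref{eq riera 2} gives in particular
\[
\|\grad\ell_{c}(h)\|_{\weil}^{2}\ \geq\ \frac{2}{\pi}\sum_{\Gamma\in\mathcal{A}\setminus\Delta}f(\mathfrak{u}(\Gamma)).
\]
It therefore suffices to exhibit at least $c_{1}\,\delta\,N^{2}$ distinct classes $\Gamma\in\mathcal{A}\setminus\Delta$ with $f(\mathfrak{u}(\Gamma))\geq c_{2}$, for universal constants $c_{1},c_{2}>0$, after which choosing $n_{0}$ large makes the lower-order corrections harmless.

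\emph{The quadratic family.} Fix a lift $\widetilde{\alpha}\subset\Hyp^{2}$ of $\alpha$. Since $\alpha$ is embedded and two distinct geodesics of $\Hyp^{2}$ meet at most once, the $N$ points of $\alpha\cap\gamma$ are in bijection with the lifts $\widetilde{\gamma}_{1},\dots,\widetilde{\gamma}_{N}$ of $\gamma$ crossing $\widetilde{\alpha}$. Each $\widetilde{\gamma}_{i}$ contains a point of the arc $\widetilde{\alpha}$, which has diameter at most $\mathrm{length}_{h}(\alpha)\leq\epsilon_{0}$; hence $d(\widetilde{\gamma}_{i},\widetilde{\gamma}_{j})\leq\epsilon_{0}$ for $i\neq j$, so each unordered pair $\{\widetilde{\gamma}_{i},\widetilde{\gamma}_{j}\}$ defines a class $\Gamma_{ij}\in\mathcal{A}\setminus\Delta$ with $\mathfrak{u}(\Gamma_{ij})=\cosh d(\widetilde{\gamma}_{i},\widetilde{\gamma}_{j})\leq\cosh\epsilon_{0}$, and therefore $f(\mathfrak{u}(\Gamma_{ij}))\geq f(\cosh\epsilon_{0})=:c_{2}>0$. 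The remaining point is to bound the multiplicity of the assignment $\{i,j\}\mapsto\Gamma_{ij}$, i.e.\ the number of pairs collapsing to a given class.

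\emph{Counting via the embedded tube, and the obstacle.} If $\Gamma_{ij}=\Gamma_{kl}$, some $g\in\pi_{1}(S)$ with $g\neq\mathrm{id}$ sends $\{\widetilde{\gamma}_{i},\widetilde{\gamma}_{j}\}$ to $\{\widetilde{\gamma}_{k},\widetilde{\gamma}_{l}\}$; then $g\widetilde{\alpha}$ is a lift of $\alpha$ distinct from $\widetilde{\alpha}$, so embeddedness of the $\delta$-neighbourhood of $\alpha$ gives $d(\widetilde{\alpha},g\widetilde{\alpha})\geq 2\delta$, while both $g\widetilde{\gamma}_{i}$ and $g\widetilde{\gamma}_{j}$ still cross $\widetilde{\alpha}$. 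Thus a collapse is controlled by the lifts of $\alpha$ crossed by \emph{both} $\widetilde{\gamma}_{i}$ and $\widetilde{\gamma}_{j}$; as $\widetilde{\gamma}_{i},\widetilde{\gamma}_{j}$ are disjoint geodesics at distance $\leq\epsilon_{0}$, they remain within $\epsilon_{0}$ of one another only along a segment of length $O(\epsilon_{0})$, which can meet at most $1+O(\epsilon_{0}/\delta)$ of the $\geq 2\delta$-separated lifts of $\alpha$. Hence $\{i,j\}\mapsto\Gamma_{ij}$ has multiplicity $\lesssim 1+\epsilon_{0}/\delta$, so the number of distinct classes produced is $\gtrsim\binom{N}{2}\big/(1+\epsilon_{0}/\delta)\gtrsim\delta N^{2}$; combined with $f(\mathfrak{u}(\Gamma_{ij}))\geq c_{2}$ and Riera's inequality this gives $\|\grad\ell_{c}(h)\|_{\weil}^{2}\gtrsim\delta N^{2}$ with a constant independent of the genus. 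I expect the main obstacle to be precisely this last step: turning the multiplicity count into a clean uniform estimate — isolating the correct packing statement for lifts of $\alpha$ inside the ``thin lens'' between two nearby lifts of $\gamma$, keeping all the hyperbolic-trigonometry constants independent of $g$ and of the surface, and checking that the choice of $n_{0}$ (with $\epsilon_{0}$ small) absorbs the additive and lower-order terms in the counting. One records along the way the elementary inequality $\ell_{c}(h)\geq 2\delta N$ (consecutive crossings of $\gamma$ with $\alpha$ are at $\gamma$-distance at least $2\delta$, by the same embeddedness argument), which feeds the term $\tfrac{2}{\pi}\ell_{c}(h)$ of~\eqref{eq riera 2} and is convenient for the bookkeeping.
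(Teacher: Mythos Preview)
Your setup is the same as the paper's: fix a lift $\widetilde\alpha$, produce the $\binom{N}{2}$ unordered pairs of lifts $\{\widetilde\gamma_i,\widetilde\gamma_j\}$ crossing $\widetilde\alpha$, and feed them into Riera's formula. The divergence is in how you extract the factor $\delta$, and that is where your argument breaks.

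Your multiplicity bound rests on the sentence ``as $\widetilde\gamma_i,\widetilde\gamma_j$ are disjoint geodesics at distance $\leq\epsilon_0$, they remain within $\epsilon_0$ of one another only along a segment of length $O(\epsilon_0)$''. This is false. If $d=d(\widetilde\gamma_i,\widetilde\gamma_j)$, the set of points on $\widetilde\gamma_i$ within $\epsilon_0$ of $\widetilde\gamma_j$ is an interval of half-length $s$ with $\cosh s=\sinh\epsilon_0/\sinh d$, which blows up as $d\to0$. Nothing in the hypotheses prevents $d$ from being tiny (two lifts of a simple closed geodesic can be arbitrarily close if $\gamma$ runs through a thin part of $(S,h)$), so your packing estimate ``at most $1+O(\epsilon_0/\delta)$ lifts of $\alpha$ in the lens'' is unjustified, and with it the multiplicity bound $\lesssim 1+\epsilon_0/\delta$. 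Note also that even if the multiplicity bound held, your scheme caps the sum by $c_2\binom{N}{2}$ and cannot produce the factor $\delta$; the passage ``$\binom{N}{2}/(1+\epsilon_0/\delta)\gtrsim\delta N^2$'' already presumes $\delta$ bounded.

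What the paper does instead is exactly the missing idea: rather than cap multiplicity, it shows that \emph{large multiplicity forces the pair to be exponentially close}. If the class $[\widetilde\gamma,\widetilde\gamma']$ has multiplicity $n+1$, one finds $n+1$ distinct lifts of $\alpha$, each crossing both $\widetilde\gamma$ and $\widetilde\gamma'$, hitting $\widetilde\gamma$ at points that are $\geq\delta$ apart (by embeddedness of the $\delta$-tube). Hence some such crossing lies at distance $\geq n\delta/2$ from the foot of the common perpendicular; a Lambert quadrilateral then gives
\[
\sinh d(\widetilde\gamma,\widetilde\gamma')\ \leq\ \frac{\sinh\epsilon_0}{\cosh(n\delta/2)}\ \leq\ 2(\sinh\epsilon_0)\,e^{-n\delta/2}.
\]
Since the Riera integrand behaves like $-2\log\sinh d$ for small $d$, the contribution of this single class is $\gtrsim \delta\,(\mathrm{mult}-1)+C_1(\epsilon_0)$ with $C_1(\epsilon_0)>0$. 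Summing over classes, the multiplicities add back to $N(N-1)$, which is what yields $\gtrsim\delta N^2$ after absorbing lower-order terms via the choice of $n_0$. In short: you should not try to bound the multiplicity; you should use it, because it is precisely the mechanism that manufactures the $\delta$.
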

\begin{proof}
Recall that $\gamma$ denotes the $h$-geodesic representative of $c$, let $\pi:\Hyp^2\to (S,h)$ be a fixed metric universal cover, and let us fix a lift $\tilde\alpha$ of the geodesic arc $\alpha$, so that $\pi|_{\tilde\alpha}$ is a homemorphism onto $\alpha$. We suppose that $\#(\alpha\cap\gamma)>n_0>0$, and we will determine $n_0$ later on.

Let us denote
\begin{equation} \label{eq defi A alpha}
\mathcal A_{\tilde \alpha}=\{[\tilde \gamma_1,\tilde \gamma_2]\in\mathcal A:\tilde \gamma_1\cap\tilde\alpha\neq\emptyset,\tilde \gamma_2\cap\tilde\alpha\neq\emptyset\}~.
\end{equation}
Denote moreover $E=\gamma\cap\alpha$ and define a function
$$\varphi:E\times E\to \mathcal A_{\tilde\alpha}$$
such that $\varphi(p,q)=[\tilde \gamma_{p},\tilde \gamma_{q}]$, where $\tilde \gamma_p$ is the unique geodesic of $\Hyp^2$ such that $\pi(\tilde \gamma_p)=c$ and $\pi(\tilde \gamma_p\cap\tilde \alpha)=p$.
Clearly $\varphi$ is surjective and maps the diagonal in $E\times E$ to $\mathcal A_{\tilde\alpha}\cap \Delta$. 

We claim that, for $[\tilde \gamma,\tilde \gamma']\in\mathcal A$:
\begin{equation} \label{eq claim cardinality}
\sinh d[\tilde \gamma,\tilde \gamma']\leq 2(\sinh \epsilon_0){\exp\left(-\frac{\delta}{2}\cdot(\#(\varphi^{-1}[\tilde \gamma,\tilde \gamma'])-1)\right)}~.
\end{equation}

To prove the claim, suppose the cardinality of $\varphi^{-1}[\tilde \gamma,\tilde \gamma']$ is {$n+1$}. Therefore there are $n+1$ pairs $(p_i,q_i)$ such that $[\tilde \gamma_{p_i},\tilde \gamma_{q_i}]$ are all equivalent to $(\tilde \gamma,\tilde \gamma')$ in $\mathcal A$. This means that there exists $g_i\in\pi_1(S)$ such that $g_i(\tilde \gamma_{p_i},\tilde \gamma_{q_i})=(\tilde \gamma,\tilde \gamma')$. It follows that, for every $i\neq j$, the arcs $g_i(\tilde \alpha)$ and $g_j(\tilde \alpha)$ are distinct. Indeed, if $g_i(\tilde \alpha)=g_j(\tilde \alpha)$, then $g_i\circ g_j^{-1}$ would send $\tilde\alpha$ to itself and move at least one point of $\tilde\alpha$, which is impossible since $\alpha$ is embedded. Now, the arcs $g_i(\tilde \alpha)$ intersect $\tilde \gamma$ in the $n+1$ different points $g_i(p_i)$, which are at distance at least $\delta$ from one another since the $\delta$-neighborhood of $\alpha$ is embedded. Let $r$ and $r'$ be the feet of the common perpendicular of $\tilde \gamma$ and $\tilde \gamma'$. Then, at least one of the points $g_i(p_i)$, is at distance at least $n\delta/2$ from $r$.

Denote by $p_0=g_{i_0}(p_{i_0})$ be such point, and let $q_0$ be the projection of $p_0$ to $\tilde \gamma'$. 
The quadrilateral with vertices in $p_0,r,r',q_0$ is a Lambert quadrilateral, that is, it has right angles at $r,r',q_0$.
See Figure \ref{lambert}. Hence the following formula holds:
$$\sinh d(p_0,\tilde \gamma')=\cosh d(p_0,r)\sinh d(\tilde \gamma,\tilde \gamma')~.$$
This concludes the claim, since $d(p_0,\tilde\gamma')\leq \mathrm{length}(\alpha)\leq \epsilon$ and $d(p_0,r)\geq n\delta/2$, and thus
$$\sinh d(\tilde \gamma,\tilde \gamma')\leq\frac{ \sinh \epsilon_0}{\cosh(n\delta/2)}~,$$
from which the claim follows.

\begin{figure}[htbp]
\centering
\includegraphics[height=6cm]{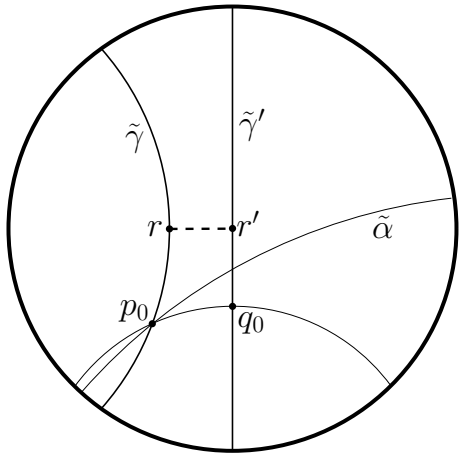}
\caption{The Lambert quadrilateral in the proof of Lemma \ref{lemma grad arcs}.} \label{lambert}
\end{figure}

Now we can conclude the proof. By Theorem \ref{thm riera}, we have:
\begin{align*} \| \grad\ell_{c}\|_{\weil}^{2}&\geq\frac{2}{\pi}\sum_{[\tilde \gamma_1,\tilde \gamma_2]\in\mathcal A_{\tilde \alpha}\setminus \Delta} \left(\cosh d[\tilde \gamma_1,\tilde \gamma_2]\log\left(\frac{\cosh d[\tilde \gamma_1,\tilde \gamma_2]+1}{\cosh d[\tilde \gamma_1,\tilde \gamma_2]-1}\right)-2\right) \\
&\geq \frac{2}{\pi}\sum_{[\tilde \gamma_1,\tilde \gamma_2]\in\mathcal A_{\tilde \alpha}\setminus \Delta} \left(\log\left(\frac{\cosh d[\tilde \gamma_1,\tilde \gamma_2]+1}{\sinh d[\tilde \gamma_1,\tilde \gamma_2]}\right)^2-2\right) \\
&\geq \frac{2}{\pi}\sum_{[\tilde \gamma_1,\tilde \gamma_2]\in\mathcal A_{\tilde \alpha}\setminus \Delta}\left(2\log 2-2\log \sinh d[\tilde \gamma_1,\tilde \gamma_2]-2\right) \\
&\geq \frac{4}{\pi}\sum_{[\tilde \gamma_1,\tilde \gamma_2]\in\mathcal A_{\tilde \alpha}\setminus \Delta}\left(-\log\sinh\epsilon_0+\frac{\delta}{2}(\#(\varphi^{-1}[\tilde \gamma,\tilde \gamma'])-1)+\log 2-1\right) \end{align*}
where in the last line we have used Equation \eqref{eq claim cardinality}. Therefore, {if we suppose that $\epsilon_{0}$ is small enough so that $C_1(\epsilon_{0}):=-\log(\sinh(\epsilon_{0}))-1+\log 2$>0}, we get 
\begin{align*} \| \grad\ell_{c}\|_{\weil}^{2}&\geq
 \frac{2\delta}{\pi}\sum_{[\tilde \gamma_1,\tilde \gamma_2]\in\mathcal A_{\tilde \alpha}\setminus \Delta}\left(\#(\varphi^{-1}[\tilde \gamma,\tilde \gamma'])\right)+C_1(\epsilon_0)(\#(\alpha\cap\gamma)-1) \\
 &\geq\frac{2\delta}{\pi}\cdot\#(E\times E\setminus \varphi^{-1}(\Delta))+C_1(\epsilon_0)(\#(\alpha\cap\gamma)-1) \\
 &=\frac{2\delta}{\pi}(\#(\alpha\cap\gamma))(\#(\alpha\cap\gamma)-1)+C_1(\epsilon_0)(\#(\alpha\cap\gamma)-1) \\
 &= \frac{2\delta}{\pi}(\#(\alpha\cap\gamma))^2+\left(C_1(\epsilon_0)-\frac{2\delta}{\pi}\right)\#(\alpha\cap\gamma)-C_1(\epsilon_0)\\
&\geq \frac{2\delta}{\pi}(\#(\alpha\cap\gamma)-1)^2 \ ,
\end{align*}
provided $\#(\alpha\cap\gamma)>1$. The last quantity is certainly larger than $C\cdot\delta\cdot (\#(\alpha\cap\gamma))^2$, if $\#(\alpha\cap\gamma)>n_0$, for some suitable choices of $n_0$ and $C=C(n_0)$.
\end{proof}

We will now replace the constant $\epsilon_0$ which gives the thin-thick decomposition (see Equations \eqref{eq:thin} and \eqref{eq:thick}) by a smaller constant (if necessary), so that $\epsilon_0$ is smaller than the constant given by Lemma \ref{lemma grad arcs}.

\begin{prop} \label{prop estimate thick}
Let $\epsilon_{0}<\arcsinh(\exp(\log(2)-1))$ a constant inducing a thin-thick decomposition of $S$. There exists a constant $a=a(\epsilon_{0})$ (independent of the genus of $S$) such that for every hyperbolic metric $h$ on $S$ and every simple closed curve $c$, if the $h$-geodesic representative $\gamma$ of $c$ satisfies:
\[
	\mathrm{length}_{h}(\gamma\cap S^\thin_h)\leq \mathrm{length}_{h}(\gamma\cap S^\thick_h)~,
\]
then
\[
	\| \grad \ell_{c}(h)\|_{\weil}\geq \frac{a}{|\chi(S)|}\ell_{c}(h) \ .
\]
\end{prop}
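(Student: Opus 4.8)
The strategy is to play off two lower bounds for $\|\grad\ell_c(h)\|_{\weil}^2$ against each other. The first is the leading-term bound $\|\grad\ell_c(h)\|_{\weil}^2\ge\frac2\pi\ell_c(h)$, which follows at once from Theorem \ref{thm riera}, since $u\log\frac{u+1}{u-1}>2$ for every $u>1$ and hence all terms in Riera's sum are nonnegative. The second comes from Lemma \ref{lemma grad arcs}, once we produce a short embedded geodesic arc that $\gamma$ crosses a number of times proportional to $\ell_c(h)/|\chi(S)|$. We may shrink $\epsilon_0$ once more so that it lies below the threshold of Lemma \ref{lemma grad arcs}, and we fix auxiliary radii $0<\rho_1<\rho_3:=\epsilon_0/4$ and width $\delta:=\epsilon_0/8$; every constant below depends only on $\epsilon_0$, never on the genus.

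First I would choose a maximal $\rho_1$-separated set $\{x_1,\dots,x_N\}\subset S^\thick_h$. By maximality the balls $B(x_i,\rho_1)$ cover $S^\thick_h$, and each is embedded because $\injrad_h\ge\epsilon_0/2$ on the thick part; since the balls $B(x_i,\rho_1/2)$ are pairwise disjoint, Gauss--Bonnet gives $N(\cosh(\rho_1/2)-1)\le|\chi(S)|$. The hypothesis $\mathrm{length}_h(\gamma\cap S^\thick_h)\ge\frac12\ell_c(h)$ together with the pigeonhole principle then produce a ball $B:=B(x_0,\rho_1)$ with $\mathrm{length}_h(\gamma\cap B)\ge\ell_c(h)/(2N)$. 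Inside the embedded ball $B_3:=B(x_0,\rho_3)$, the components of $\gamma\cap B_3$ are geodesic chords of length at most $2\rho_3$; those which enter $B$ contribute at most $2\rho_1$ each to $\mathrm{length}_h(\gamma\cap B)$, so there are at least $m:=\ell_c(h)/(4\rho_1N)$ of them, and each such chord passes within distance $\rho_1$ of $x_0$.

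Next I would extract a good arc by averaging over the diameters $\alpha_\theta$ of $B_3$ through $x_0$ (for $\theta$ a unit tangent vector at $x_0$): each $\alpha_\theta$ is an embedded geodesic arc of length $2\rho_3\le\epsilon_0$, and its $\delta$-neighborhood, contained in $B(x_0,\rho_3+\delta)\subseteq B(x_0,\epsilon_0/2)$, is embedded. A full chord of $B_3$ passing within $\rho_1$ of $x_0$ subtends at $x_0$, by hyperbolic trigonometry in the relevant Lambert quadrilateral, an angle bounded below by a constant $c_2=c_2(\epsilon_0)>0$; hence the set of directions $\theta$ for which $\alpha_\theta$ meets a given such chord has measure at least $c_2$. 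Summing over the $m$ chords and integrating over $\theta$ yields $\int_{S^1}\#(\alpha_\theta\cap\gamma)\,d\theta\ge c_2m$, so for generic $\theta^*$ the arc $\alpha:=\alpha_{\theta^*}$ is crossed by $\gamma$ at least $k:=c_2m/(2\pi)$ times, and $k\ge c_3\ell_c(h)/|\chi(S)|$ for a constant $c_3=c_3(\epsilon_0)>0$ by the previous paragraph. If $\ell_c(h)\ge(n_0/c_3)|\chi(S)|$, then $k\ge n_0$ and Lemma \ref{lemma grad arcs} gives $\|\grad\ell_c(h)\|_{\weil}^2\ge C(n_0)\delta k^2\ge C(n_0)\delta c_3^2\,\ell_c(h)^2/|\chi(S)|^2$. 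In the complementary range $\ell_c(h)<(n_0/c_3)|\chi(S)|\le(n_0/c_3)|\chi(S)|^2$, the leading-term bound $\|\grad\ell_c(h)\|_{\weil}^2\ge\frac2\pi\ell_c(h)$ already exceeds $\frac{a^2}{|\chi(S)|^2}\ell_c(h)^2$ as soon as $a^2\le 2c_3/(\pi n_0)$. Taking $a:=\min\{c_3\sqrt{C(n_0)\delta},\,\sqrt{2c_3/(\pi n_0)}\}$, which depends only on $\epsilon_0$, finishes the proof.

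The main obstacle is the arc-finding step: picking an arbitrary short embedded arc in the thick ball does not suffice, because a naive choice (say a perpendicular to $\gamma$ at one point) need not be crossed many times. The separation of scales $\rho_1\ll\rho_3$ is what forces a positive proportion of the length of $\gamma$ near $x_0$ to be carried by many distinct strands all passing close to $x_0$; the uniform angle lower bound $c_2$ is what lets a single diameter catch proportionally many of those strands; and keeping all auxiliary radii strictly below $\injrad_h$ on the thick part is what makes the chord counts and length bookkeeping legitimate computations in $\Hyp^2$.
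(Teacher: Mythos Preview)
Your proof is correct and follows the same global architecture as the paper's: produce a short embedded arc in the thick part that $\gamma$ crosses $\gtrsim \ell_c(h)/|\chi(S)|$ times, feed this into Lemma~\ref{lemma grad arcs}, and fall back on the Riera leading term $\|\grad\ell_c\|_\weil^2\ge\frac{2}{\pi}\ell_c$ when $\ell_c(h)$ is below the threshold $\sim n_0|\chi(S)|$.

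The difference lies entirely in how the arc is found. The paper outsources this step: it invokes an adaptation of \cite[Lemma~A.1]{bonschlfixed}, which guarantees that for at least half the points $x\in\gamma$ the orthogonal arc $\alpha_x^r(\epsilon_0/4)$ of length $\epsilon_0/4$ crosses $\gamma$ at least $\beta_0\ell_c(h)/|\chi(S)|$ times; one then picks such an $x$ in the thick part, so that the arc and its $(\epsilon_0/4)$-neighborhood are embedded. Your route is self-contained: a maximal $\rho_1$-net in $S^\thick_h$ gives $N\lesssim|\chi(S)|$ balls covering the thick part, pigeonhole locates a ball carrying length $\gtrsim\ell_c(h)/|\chi(S)|$, the scale separation $\rho_1<\rho_3$ converts this length into $m\gtrsim\ell_c(h)/|\chi(S)|$ full chords of $B_3$ each passing within $\rho_1$ of the center, and averaging over diameters (using the uniform lower bound on the angle each such chord subtends at $x_0$) produces one diameter crossed $\gtrsim m$ times. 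What the paper's approach buys is brevity; what yours buys is that the reader does not need to look up the external lemma, and the geometric mechanism (many nearly-parallel strands in a fixed-size thick ball) is made explicit.
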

\begin{proof}
By an adaptation of the argument of \cite[Lemma A.1]{bonschlfixed}, there exists a constant $\beta_0$ (independent on $h$) such that the  subset 
\[
	\widehat \gamma:=\left\{x\in \gamma \left| \right. \,\#(\alpha_x^r(\epsilon_0/4)\cap \gamma)\leq \frac{\beta_0}{|\chi(S)|}\ell_c(h)\right\}\subseteq \gamma
\]
has $h$-lenght at most $\ell_c(h)/2$, where $\alpha_x^r(\epsilon_0/4)$ is the $h$-geodesic arc orthogonal to $\gamma$ starting at $x$, on the right with respect to a chosen orientation of $\gamma$, of length $\epsilon_0/4$. The constant $\beta_0$ only depends on the initial choice of $\epsilon_0$.\\
Now, in our hypothesis, since $\mathrm{length}_{h}(\gamma\cap S^\thin_h)+ \mathrm{length}_{h}(\gamma\cap S^\thick_h)=\ell_c(h)$, the length of $\gamma\cap S^\thick_h$ is at least $\ell_c(h)/2$. Therefore there exists some point $x\in(\gamma\setminus \widehat \gamma)\cap S^\thick_h$. Since $x$ is in the $\epsilon_0$-thick part of $(S,h)$, the arc $\alpha_x^r(\epsilon_0/4)$ is embedded. Moreover, the $(\epsilon_0/4)$-neighborhood of $\alpha_x^r(\epsilon_0/4)$ is embedded, for otherwise there would be a closed loop starting from $x$ of length less than $\epsilon_0$, which contradicts $x$ being in the $\epsilon_0$-thick part. 

Recall that, since by construction $x\in(\gamma\setminus \widehat \gamma)\cap S^\thick_h$, $\#(\alpha_x^r(\epsilon_0/4)\cap\gamma)\geq (\beta_0\ell_c(h))/|\chi(S)|$. Hence, from Lemma \ref{lemma grad arcs}, there exist constants $n_0>0$ and $K=K(\epsilon_0,n_0)>0$  such that
\begin{equation} \label{eq batman}
	||\grad\ell_c(h)||_\weil\geq K\#(\alpha_x^r(\epsilon_0/4)\cap\gamma)\geq \frac{K\beta_0}{|\chi(S)|} \ell_c(h)~,
\end{equation}
whenever $\#(\alpha_x^r(\epsilon_0/4)\cap\gamma)\geq n_0$, which occurs under the hypothesis that
\begin{equation} \label{eq tex willer} 
\ell_c(h)\geq \frac{n_0}{\beta_0}|\chi(S)|~.
\end{equation}
On the other hand, we have the inequality $\|\grad \ell_{c}(h)\|_{\weil}\geq \sqrt{(2/\pi)\ell_{c}(h)}$ from Equation \eqref{eq:formulaWolpert}. Observe that if 
\begin{equation} \label{eq kid} \ell_c(h)\leq \frac{n_0}{\beta_0}|\chi(S)|~,\end{equation}
then
$$\sqrt{(2/\pi)\ell_{c}(h)}\geq \sqrt{\frac{2\beta_0}{\pi n_0}}\frac{1}{|\chi(S)|^{1/2}}\ell_{c}(h)~.$$

Hence, putting together the cases \eqref{eq tex willer} and \eqref{eq kid}, there exists a constant $a>0$ such that
$$\| \grad \ell_{c}(h)\|_{\weil}\geq \frac{a}{|\chi(S)|}\ell_{c}(h)$$
thus concluding the proof.
\end{proof}

\subsection{Estimates in the thin part of the hyperbolic surface}

We are left to consider the case when 
\[
	\mathrm{length}_{h}(\gamma\cap S^\thin_h)\geq \mathrm{length}_{h}(\gamma\cap S^\thick_h)~,
\]
where $\gamma$ is the $h$-geodesic representative of $c$.
For this purpose, suppose $\gamma$ enters into a tube $T_{\alpha,d(\epsilon)}$, where $\alpha$ is a simple closed geodesic of length $\ell_h(\alpha)=\epsilon\leq \epsilon_0$.


Let us fix a metric universal cover $\pi:\Hyp^2\to(S,h)$ and a lift $\tilde\alpha$ of $\alpha$, that is, an entire geodesic in $\Hyp^2$. Let $A\in\pi_1(S)$ be a primitive element which corresponds to a hyperbolic isometry with axis $\tilde \alpha$. 
We will denote (in analogy with the notation of \eqref{eq defi A alpha}, but with the difference that here $\tilde\alpha$ covers $\alpha$):
$$\mathcal A_{\tilde \alpha}=\{[\tilde \gamma_1,\tilde \gamma_2]:\tilde \gamma_1\cap\tilde\alpha\neq\emptyset,\tilde \gamma_2\cap\tilde\alpha\neq\emptyset\}~,$$
which is a subset of the set of equivalence classes defined in Equation \eqref{eq defi A}.

\begin{lemma} \label{lemma counting pairs}
Let $[\tilde \gamma_1,\tilde \gamma_2],[\tilde \gamma_1',\tilde \gamma_2']\in \mathcal A_{\tilde\alpha}$. 
If $[\tilde \gamma_1,\tilde \gamma_2]=[\tilde \gamma_1',\tilde \gamma_2']$ and $d(\tilde \gamma_1\cap\tilde\alpha,\tilde \gamma_2\cap\tilde\alpha)\geq\mathrm{length}_h(\alpha)$, then there exists $k\in\Z$ such that $\tilde \gamma_1'=A^k(\tilde \gamma_1)$ and $\tilde \gamma_2'=A^k(\tilde \gamma_2)$.
\end{lemma}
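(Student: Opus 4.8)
The statement to prove is Lemma \ref{lemma counting pairs}: given two pairs $[\tilde\gamma_1,\tilde\gamma_2]$ and $[\tilde\gamma_1',\tilde\gamma_2']$ in $\mathcal A_{\tilde\alpha}$ which are equal as elements of $\mathcal A$, and such that the first pair has its two intersection points with $\tilde\alpha$ at distance at least $\mathrm{length}_h(\alpha)$, one wants to conclude that the element of $\pi_1(S)$ realizing the equivalence can be taken to be a power of $A$. The plan is to unwind the definition of the $\pi_1(S)$-action on pairs of geodesics. Since $[\tilde\gamma_1,\tilde\gamma_2]=[\tilde\gamma_1',\tilde\gamma_2']$ in $\mathcal A$, by definition (Equation \eqref{eq defi A}) there exists $g\in\pi_1(S)$ with $g(\tilde\gamma_1)=\tilde\gamma_1'$ and $g(\tilde\gamma_2)=\tilde\gamma_2'$. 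The goal becomes: show $g$ can be replaced by a power $A^k$ (which of course also maps $\tilde\gamma_1\mapsto A^k\tilde\gamma_1$ and $\tilde\gamma_2\mapsto A^k\tilde\gamma_2$, and these must then coincide with $\tilde\gamma_1',\tilde\gamma_2'$).

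\textbf{Key steps.} First I would introduce the covering map $\pi$, and note that $\pi(\tilde\gamma_i)=\gamma$ for all four geodesics, while $\pi(\tilde\alpha)=\alpha$ and $A$ generates the stabilizer $\mathrm{Stab}(\tilde\alpha)\cong\Z$ in $\pi_1(S)$ (using that $\alpha$ is a simple closed geodesic, hence primitive, so its stabilizer is generated by a primitive hyperbolic element). The points $p_i:=\tilde\gamma_i\cap\tilde\alpha$ and $p_i':=\tilde\gamma_i'\cap\tilde\alpha$ all lie on $\tilde\alpha$ and project into $\gamma\cap\alpha\subset S$. Since $g$ sends $\tilde\gamma_i$ to $\tilde\gamma_i'$ and both meet $\tilde\alpha$, the geodesics $g(\tilde\alpha)$ and $\tilde\alpha$ both pass through $p_i'=g(p_i)$; the crucial point is that if $g(\tilde\alpha)\ne\tilde\alpha$ then $g(\tilde\alpha)$ and $\tilde\alpha$ are two distinct lifts of $\alpha$ meeting at a point, which forces the distance between the two intersection points $p_1',p_2'$ on $\tilde\alpha$ — equivalently between $p_1,p_2$ on $\tilde\alpha$ — to be small. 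More precisely, two distinct lifts of the embedded geodesic $\alpha$ cannot both cross $\tilde\alpha$ within a segment of length $\geq \mathrm{length}_h(\alpha)$: such a configuration would produce, after projecting, a self-intersection or a too-short geodesic loop contradicting embeddedness of $\alpha$ (this is where the hypothesis $d(p_1,p_2)\geq\mathrm{length}_h(\alpha)$ is used, essentially via a collar/embeddedness argument in the cylinder $T_{\alpha,d(\epsilon)}$). Hence $g(\tilde\alpha)=\tilde\alpha$, i.e.\ $g\in\mathrm{Stab}(\tilde\alpha)=\langle A\rangle$, so $g=A^k$ for some $k\in\Z$, and then $\tilde\gamma_i'=g(\tilde\gamma_i)=A^k(\tilde\gamma_i)$ as desired.

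\textbf{Main obstacle.} The delicate part is the geometric argument showing $g(\tilde\alpha)=\tilde\alpha$: one must carefully rule out that $g(\tilde\alpha)$ is a \emph{different} lift of $\alpha$ crossing $\tilde\alpha$. The hypothesis on the distance $d(p_1,p_2)\geq\mathrm{length}_h(\alpha)$ is exactly calibrated so that, were $g(\tilde\alpha)$ distinct from $\tilde\alpha$, the two lifts would cross $\tilde\alpha$ at two points $p_1',p_2'$ whose separation along $\tilde\alpha$ is forced to be less than $\mathrm{length}_h(\alpha)$ (because consecutive lifts of $\alpha$ crossing a fixed lift are translates of one another under a conjugate of $A$, and the geometry of the thin tube bounds how far apart the crossing points can be), contradicting $d(p_1',p_2')=d(p_1,p_2)\geq\mathrm{length}_h(\alpha)$ since $g$ is an isometry. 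Assembling this cleanly — choosing the right ``distinct lifts cannot cross the same geodesic within a short segment'' statement and invoking it — is the only real content; everything else is bookkeeping with the covering action. I would state the required fact about lifts of $\alpha$ as a short sub-observation (or cite the Collar Lemma set-up from Equations \eqref{eq defi tubes}--\eqref{eq defi d epsilon}) and then finish in two lines.
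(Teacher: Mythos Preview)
Your overall strategy --- show that the deck transformation $g$ sending $(\tilde\gamma_1,\tilde\gamma_2)$ to $(\tilde\gamma_1',\tilde\gamma_2')$ must lie in $\mathrm{Stab}(\tilde\alpha)=\langle A\rangle$ --- is exactly the paper's, but your execution contains a genuine error and the ``crucial point'' is not actually proved.

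The error is the identification $p_i'=g(p_i)$. You define $p_i=\tilde\gamma_i\cap\tilde\alpha$ and $p_i'=\tilde\gamma_i'\cap\tilde\alpha$, and then claim that $g(\tilde\alpha)$ and $\tilde\alpha$ both pass through $p_i'=g(p_i)$. But $g(p_i)=g(\tilde\gamma_i\cap\tilde\alpha)=\tilde\gamma_i'\cap g(\tilde\alpha)$, which equals $p_i'=\tilde\gamma_i'\cap\tilde\alpha$ only if $g(\tilde\alpha)=\tilde\alpha$ --- precisely what you are trying to prove. Since $\alpha$ is simple, distinct lifts of $\alpha$ are disjoint, so $g(\tilde\alpha)$ and $\tilde\alpha$ never meet when $g\notin\langle A\rangle$; there is no common point to speak of. This error propagates: your later assertion $d(p_1',p_2')=d(p_1,p_2)$ ``since $g$ is an isometry'' is unjustified, because $g$ carries $(p_1,p_2)$ to $(g(p_1),g(p_2))$ on $g(\tilde\alpha)$, not to $(p_1',p_2')$ on $\tilde\alpha$. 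So the contradiction you announce never materializes.

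What remains is a vague appeal (``would produce a self-intersection or a too-short geodesic loop'', ``the geometry of the thin tube bounds how far apart the crossing points can be'') that does not constitute a proof. The paper's argument is concrete and different in mechanism: working with $D=g^{-1}$, the lift $D(\tilde\alpha)$ is a geodesic distinct from $\tilde\alpha$ which crosses both $\tilde\gamma_1$ and $\tilde\gamma_2$. In the borderline case $\tilde\gamma_2=A(\tilde\gamma_1)$, one considers the translate $A\circ D(\tilde\alpha)$: it meets $\tilde\gamma_2=A(\tilde\gamma_1)$ at the $A$-image of $D(\tilde\alpha)\cap\tilde\gamma_1$, which lies closer to $\tilde\alpha$ than $D(\tilde\alpha)\cap\tilde\gamma_2$, while on the $\tilde\gamma_1$ side it lies farther (or misses entirely). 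Hence $A\circ D(\tilde\alpha)$ and $D(\tilde\alpha)$ are forced to cross, contradicting that lifts of the simple curve $\alpha$ are pairwise disjoint. The case $d(p_1,p_2)>\mathrm{length}_h(\alpha)$ then follows a fortiori. This is the missing geometric content you would need to supply.
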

\begin{proof}
Suppose that the equivalence classes of $(\tilde \gamma_1,\tilde \gamma_2)$ and $(\tilde \gamma_1',\tilde \gamma_2')$ coincide, and there does not exist any $k\in\Z$ such that $\tilde \gamma_1'=A^k(\tilde \gamma_1)$ and $\tilde \gamma_2'=A^k(\tilde \gamma_2)$. We will then prove that $d(\tilde \gamma_1\cap\tilde\alpha,\tilde \gamma_2\cap\tilde\alpha)<\mathrm{length}_h(\alpha)$. 

We first consider the case in which $d(\tilde \gamma_1\cap\tilde\alpha,\tilde \gamma_2\cap\tilde\alpha)=\mathrm{length}_h(\alpha)$, which occurs  if $\tilde\gamma_2=A(\tilde \gamma_1)$ (or $\tilde\gamma_2=A^{-1}(\tilde \gamma_1)$, which will be completely analogous). This means that there exists $D\in\pi_1(S)$ such that $D(\tilde\gamma_i')=\tilde\gamma_i$ for $i=1,2$, but $D$ is not in the stabilizer of $\tilde\alpha$. Hence $D(\tilde\alpha)$ is a geodesic of $\Hyp^2$, different from $\tilde\alpha$, which intersects both $\tilde\gamma_1$ and $\tilde\gamma_2$. 

We can also assume that $D$ is such that $0<d(\tilde\gamma_1\cap\tilde\alpha,\tilde\gamma_1\cap D(\tilde\alpha))<d(\tilde\gamma_2\cap\tilde\alpha,\tilde\gamma_2\cap D(\tilde\alpha))$. By this assumption, and the action by isometry of $\langle A\rangle$, it follows that $A\circ D(\tilde\alpha)$ intersects $\tilde\gamma_2$ in a point which is closer to $\tilde\alpha$ than $D(\alpha)\cap\tilde\gamma_2$. On the other hand, $A\circ D(\tilde\alpha)$ either intersects $\tilde\gamma_1$ in a point which is further from $\tilde \alpha$ than $D(\tilde\alpha)\cap \tilde\gamma_1$ (by the choice of $D$), or is disjoint from $\tilde\gamma_1$. In both cases, it follows that $A\circ D(\tilde\alpha)$ must intersect $D(\tilde\alpha)$, which gives a contradiction since $\alpha$ is a simple closed geodesic. See Figure \ref{fig:contradiction}.

In the case $d(\tilde \gamma_1\cap\tilde\alpha,\tilde \gamma_2\cap\tilde\alpha)>\mathrm{length}_h(\alpha)$, we get a contradiction \emph{a fortiori}, since every translate $D(\tilde\alpha)$ which intersects $\tilde\gamma_1$ and $\tilde\gamma_2$, must also intersect $A(\tilde\gamma_1)$ (or $A^{-1}(\tilde\gamma_1)$). This gives a contradiction as in the previous paragraph.
\end{proof}

\begin{figure}[htbp]
\centering
\includegraphics[height=6cm]{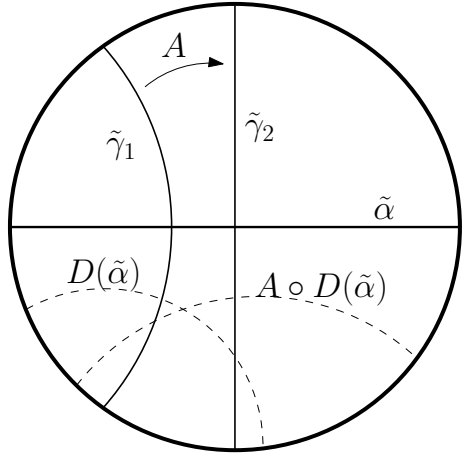}
\caption{The contradiction in the proof of Lemma \ref{lemma counting pairs}.} \label{fig:contradiction}
\end{figure}

Let us fix a connected fundamental domain $\tilde \alpha_0$ for the action of $\langle A\rangle$ on $\tilde \alpha$, and let us denote $\tilde \gamma_1,\ldots,\tilde \gamma_n$ the lifts of $\gamma$ which intersect $\tilde\alpha_0$, ordered according to an orientation of $\tilde\alpha_0$, where $n=\iota(\alpha,c)$. It follows from Lemma \ref{lemma counting pairs} and Equation \eqref{eq riera 2} that
\begin{equation} \label{eq summations}
	\| \grad\ell_{c}(h)\|_{\weil}^{2}\geq\frac{2}{\pi}\ell_{c}(h)+\frac{2}{\pi}\sum_{1\leq i\leq j\leq n} \sum_{k=1}^{+\infty} \left(\mathfrak u(\tilde\gamma_i,A^k(\tilde \gamma_j))\log\left(\frac{\mathfrak u(\tilde\gamma_i,A^k(\tilde \gamma_j))+1}{\mathfrak u(\tilde\gamma_i,A^k(\tilde \gamma_j))-1}\right)-2\right) ~.
\end{equation}

The next step thus consists of providing a uniform estimate on the multiple summation in the above inequality \eqref{eq summations}.

\begin{lemma} \label{lemma estimate from riera}
Let $\alpha$ be a simple closed geodesic on $(S,h)$ of length $\epsilon\leq \epsilon_0$ and let $\tilde \gamma_i$ and $\tilde \gamma_j$ be lifts of $\gamma$ which intersect the fundamental domain $\tilde \alpha_0$ in $\tilde \alpha$. Then there exists a universal constant $K>0$ such that
\begin{equation} \label{eq leonardo}
\sum_{k=1}^{+\infty} \left(\mathfrak u(\tilde\gamma_i,A^k(\tilde \gamma_j))\log\left(\frac{\mathfrak u(\tilde\gamma_i,A^k(\tilde \gamma_j))+1}{\mathfrak u(\tilde\gamma_i,A^k(\tilde \gamma_j))-1}\right)-2\right)\geq K\max\left\{\frac{1}{\epsilon},\frac{1}{\epsilon}|\log(\sin\theta)|^2\right\}~,
\end{equation}
where $\theta$ is the angle formed by $\tilde \gamma_i$ and $\tilde\alpha$, and
$$\mathfrak u(\tilde\gamma_i,A^k(\tilde \gamma_j))=\cosh d(\tilde\gamma_i,A^k(\tilde \gamma_j))~.$$
\end{lemma}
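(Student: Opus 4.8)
The plan is to carry out everything in the upper half-plane model, with $\tilde\alpha$ the imaginary axis and $A(z)=e^\epsilon z$, so that $\epsilon=\mathrm{length}_h(\alpha)$ is the translation length of $A$ and $\tilde\alpha_0$ is an interval of the imaginary axis of length $\epsilon$. After conjugating by a real scaling (an isometry commuting with $A$) I may assume that $\tilde\gamma_i$ meets $\tilde\alpha$ at $i$ with angle $\theta$, hence has ideal endpoints $-\tan(\theta/2)$ and $\cot(\theta/2)$; since $\tilde\gamma_i$ and $\tilde\gamma_j$ each meet the fundamental segment $\tilde\alpha_0$ exactly once (distinct geodesics of $\Hyp^2$ meet at most once), $\tilde\gamma_j$ meets $\tilde\alpha$ at some $ie^s$ with $|s|\leq\epsilon$, with some angle $\theta_j$, so its endpoints are $-e^s\tan(\theta_j/2)$ and $e^s\cot(\theta_j/2)$. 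Put $a=\tan(\theta/2)$, $b=\cot(\theta/2)$ (so $ab=1$ and $a+b=2/\sin\theta$) and $\mu_k=e^{s+k\epsilon}\tan(\theta_j/2)$, $\nu_k=e^{s+k\epsilon}\cot(\theta_j/2)$ for the absolute values of the endpoints of $A^k(\tilde\gamma_j)$. The two geodesics $\tilde\gamma_i$ and $A^k(\tilde\gamma_j)$ are disjoint for $k\geq 1$: both are lifts of the simple closed geodesic $\gamma$, and they are distinct because $A^k(\tilde\gamma_j)$ meets $\tilde\alpha$ only inside $A^k(\tilde\alpha_0)\neq\tilde\alpha_0$. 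The standard cross-ratio formula for the distance between two disjoint geodesics of $\Hyp^2$ with endpoints straddling $0$ then gives
\[ \sinh^2\!\left(\tfrac12\,d(\tilde\gamma_i,A^k(\tilde\gamma_j))\right)=\frac{|\mu_k-a|\,|\nu_k-b|}{(a+b)(\mu_k+\nu_k)}, \qquad \mathfrak{u}(\tilde\gamma_i,A^k(\tilde\gamma_j))=\cosh d(\tilde\gamma_i,A^k(\tilde\gamma_j))=1+2\sinh^2\!\left(\tfrac12 d(\cdots)\right). \]

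\textbf{The easy half of the bound.} Write $f(u)=u\log\frac{u+1}{u-1}-2$, which is positive and strictly decreasing on $(1,\infty)$. For $1\leq k\leq\lfloor 1/\epsilon\rfloor$ one has $d(\tilde\gamma_i,A^k(\tilde\gamma_j))\leq d(i,ie^{s+k\epsilon})=|s+k\epsilon|<\epsilon+1\leq 2$ (using $\epsilon\leq\epsilon_0\leq 1$), hence $f(\cosh d(\tilde\gamma_i,A^k(\tilde\gamma_j)))\geq f(\cosh 2)>0$; since there are at least $\tfrac1{2\epsilon}$ such indices, the whole sum in \eqref{eq leonardo} is $\geq \tfrac{K_0}{\epsilon}$ for a universal $K_0>0$. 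This yields the term $1/\epsilon$ in the maximum, and since $\max\{1/\epsilon,\,\tfrac1\epsilon|\log\sin\theta|^2\}$ is comparable to $1/\epsilon$ whenever $|\log\sin\theta|$ is bounded by a universal constant, it remains only to produce a lower bound $\gtrsim\tfrac1\epsilon|\log\sin\theta|^2$ when $\sin\theta$ is small.

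\textbf{Controlling the angle $\theta_j$, and a clean comparison.} The case $i=j$ is immediate ($\theta_j=\theta$, $s=0$), so assume $i\neq j$; then $\tilde\gamma_i$ and $\tilde\gamma_j$ are already disjoint, and disjointness of two geodesics whose ideal endpoints both straddle $0$ forces the pairs $\{-a,b\}$ and $\{-e^s\tan(\theta_j/2),e^s\cot(\theta_j/2)\}$ to be nested, which unwinds to $|\log(\tan(\theta_j/2)/\tan(\theta/2))|\leq|s|<\epsilon$, and hence (comparing $\tfrac{d}{d\theta}\log\sin\theta=\cos\theta\cdot\tfrac{d}{d\theta}\log\tan(\theta/2)$) also $|\log(\sin\theta_j/\sin\theta)|<\epsilon$. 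Writing $\mu_k-a=a(e^{k\epsilon+\delta_0}-1)$ and $\nu_k-b=b(e^{k\epsilon+\delta_0'}-1)$ with $|\delta_0|,|\delta_0'|<2\epsilon$, using $|e^{k\epsilon+\delta}-1|\leq 3(e^{k\epsilon}-1)$ for $k\geq 2$, the identities $ab=1$, $a+b=2/\sin\theta$, the bound $\mu_k+\nu_k\geq e^{-\epsilon}e^{k\epsilon}\cdot\tfrac{2}{\sin\theta_j}\geq \tfrac{2e^{-2\epsilon}}{\sin\theta}e^{k\epsilon}$, and $\tfrac{(e^{k\epsilon}-1)^2}{4e^{k\epsilon}}=\sinh^2(k\epsilon/2)$, one obtains for every $k\geq 2$
\[ \sinh^2\!\left(\tfrac12\,d(\tilde\gamma_i,A^k(\tilde\gamma_j))\right)\ \leq\ C_2\,\sin^2\theta\,\sinh^2\!\left(\tfrac{k\epsilon}{2}\right) \]
with $C_2$ universal. (Only this one-sided estimate is needed, since a term being smaller only makes the corresponding value of $f$ larger.)

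\textbf{The logarithmic summation.} Assume $\sin\theta<\tfrac12$ and set $K_1=\lfloor\tfrac1\epsilon\arcsinh\tfrac1{\sin\theta}\rfloor$. For $2\leq k\leq K_1$ we have $\tfrac{k\epsilon}{2}\leq\tfrac12\arcsinh\tfrac1{\sin\theta}$, so $\sinh(\tfrac{k\epsilon}{2})\leq 2^{-1/4}(\sin\theta)^{-1/2}$ and therefore $x_k:=2\sinh^2(\tfrac12 d(\cdots))\leq 2^{-1/2}C_2\sin\theta$, which is $<\tfrac12$ once $\sin\theta$ is below a universal threshold (for $\sin\theta$ between that threshold and $\tfrac12$, $|\log\sin\theta|$ is bounded and the bound $K_0/\epsilon$ already suffices; similarly we may assume $|\log\sin\theta|$ exceeds a universal constant). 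Then, using $f(1+x)\geq\log(2/x)-2$,
\[ f(\cosh d(\tilde\gamma_i,A^k(\tilde\gamma_j)))\ \geq\ \log\frac{2}{x_k}-2\ \geq\ 2|\log\sin\theta|-2\log\sinh\!\left(\tfrac{k\epsilon}{2}\right)-C_3\ \geq\ |\log\sin\theta|-C_4\ \geq\ \tfrac12|\log\sin\theta|, \]
the middle inequality using $-2\log\sinh(\tfrac{k\epsilon}{2})\geq -|\log\sin\theta|-\tfrac12\log 2$. Since $\arcsinh t\geq\log t$ and $\epsilon\leq\epsilon_0$ is small, $K_1-1\geq\tfrac1{4\epsilon}|\log\sin\theta|$, so summing over $2\leq k\leq K_1$ gives a contribution $\geq\tfrac1{8\epsilon}|\log\sin\theta|^2$; together with the easy half this proves \eqref{eq leonardo}. \emph{The main obstacle} is the penultimate paragraph: obtaining the uniform comparison $\sinh(\tfrac12 d(\tilde\gamma_i,A^k(\tilde\gamma_j)))\lesssim\sin\theta\,\sinh(\tfrac{k\epsilon}{2})$, which crucially uses the disjointness of the lifts of the simple closed geodesic $\gamma$ to force $\theta_j$ and the offset $s$ to be small; once this is in hand the logarithmic summation, though fiddly, is routine.
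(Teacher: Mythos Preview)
Your proof is correct and reaches the same conclusion, but the route differs from the paper's in two notable respects.

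First, for the key distance estimate the paper simply drops a perpendicular from the point $P=A^k(\tilde\gamma_j)\cap\tilde\alpha$ to $\tilde\gamma_i$: the right triangle with hypotenuse along $\tilde\alpha$ and angle $\theta$ at $\tilde\gamma_i\cap\tilde\alpha$ gives at once
\[
\sinh d(\tilde\gamma_i,A^k(\tilde\gamma_j))\ \leq\ \sinh d(P,\tilde\gamma_i)\ =\ \sin\theta\,\sinh d(P,\tilde\gamma_i\cap\tilde\alpha)\ \leq\ \sin\theta\,\sinh((k+1)\epsilon)\,.
\]
This uses only the angle $\theta$ at $\tilde\gamma_i$; the angle $\theta_j$ never enters, and the whole nesting argument you give (forcing $|\log(\sin\theta_j/\sin\theta)|<\epsilon$ via disjointness of the lifts of $\gamma$) is bypassed entirely. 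Your cross-ratio computation does produce the marginally sharper form $\sinh(\tfrac12 d)\lesssim \sin\theta\,\sinh(\tfrac{k\epsilon}{2})$, but at the cost of that extra step, which you correctly identify as the main obstacle.

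Second, the paper bounds the sum by comparison with an integral,
\[
\sum_{k\geq 2}F(\phi_\theta(k\epsilon))\ \geq\ \frac{1}{\epsilon}\int_{2\epsilon_0}^{\infty} F(\phi_\theta(y))\,dy,\qquad \phi_\theta(y)=\arcsinh(\sin\theta\,\sinh y),
\]
and then analyses the integral, whereas you estimate term by term over the window $2\leq k\leq K_1=\lfloor\tfrac{1}{\epsilon}\arcsinh\tfrac{1}{\sin\theta}\rfloor$. Both arguments capture the same mechanism---roughly $|\log\sin\theta|/\epsilon$ terms each of size $\gtrsim|\log\sin\theta|$---and both deliver $\tfrac{K}{\epsilon}|\log\sin\theta|^2$. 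The paper's version is cleaner overall because the one-line perpendicular estimate removes precisely the step you found most delicate; your version is more self-contained and makes the constants slightly more explicit.
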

\begin{proof}
By a simple application of hyperbolic trigonometry, we have (see Figure \ref{fig:sinhdist}):
\begin{align*}
\sinh d(\tilde\gamma_i,A^k(\tilde \gamma_j))&\leq \sinh d(\tilde\gamma_i,A^k(\tilde \gamma_j)\cap\tilde\alpha)\\
&=(\sin\theta)
\sinh d(\tilde\gamma_i\cap\tilde\alpha,A^k(\tilde \gamma_j)\cap\tilde\alpha) \\
&\leq (\sin\theta)\sinh((k+1)\epsilon)~.
\end{align*}

\begin{figure}[htbp]
\centering
\includegraphics[height=6cm]{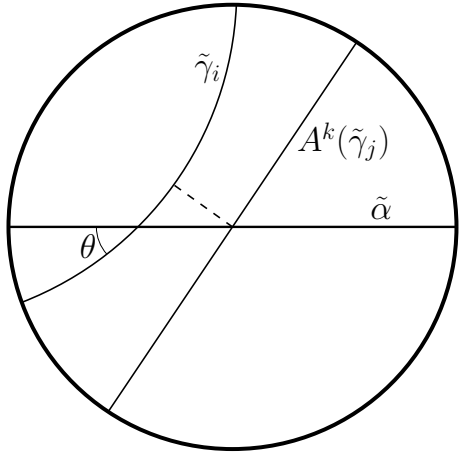}
\caption{The inequality $\sinh d(\tilde\gamma_i,A^k(\tilde \gamma_j))\leq (\sin\theta)\sinh((k+1)\epsilon)$.} \label{fig:sinhdist}
\end{figure}
Let us denote
$$F(x):=\cosh(x)\log\left(\frac{\cosh(x)+1}{\cosh(x)-1}\right)-2~,$$
which is a positive, monotone decreasing function $F:(0,+\infty)\to (0,+\infty)$.
Hence we have
$$
\sum_{k=1}^{+\infty} \left(\mathfrak u(\tilde\gamma_i,A^k(\tilde \gamma_j))\log\left(\frac{\mathfrak u(\tilde\gamma_i,A^k(\tilde \gamma_j))+1}{\mathfrak u(\tilde\gamma_i,A^k(\tilde \gamma_j))-1}\right)-2\right) 
=\sum_{k=1}^{+\infty} F(d(\tilde\gamma_i,A^k(\tilde \gamma_j)))\geq  \sum_{k=2}^{+\infty} F(\phi_\theta(k\epsilon))~,
$$
where 
$$\phi_\theta(y):=\arcsinh(\sin\theta\sinh(y))~.$$
To show that the sum in Equation \eqref{eq leonardo} is larger than $K_1/\epsilon$, we observe that $\phi_\theta(y)\leq y$ and write:
$$\sum_{k=2}^{+\infty} F(\phi_\theta(k\epsilon))\geq \int_{2}^{+\infty} F(\phi_\theta(x\epsilon))dx=\frac{1}{\epsilon}\int_{2\epsilon}^{+\infty} F(\phi_\theta(y))dy\geq \frac{1}{\epsilon}\int_{2\epsilon_0}^{+\infty} F(y)dy~.$$
This concludes the claim, by declaring
$$K_1=\int_{2\epsilon_0}^{+\infty} F(y)dy>0~.$$
In light of the inequality we have just proved, to conclude the proof it suffices to show that there exists $\theta_0>0$ such that the sum in Equation \eqref{eq leonardo} is larger than $(K_2/\epsilon)|\log(\sin\theta)|$, for all $\theta\leq \theta_0$ for some constant $K_2>0$.  

For this purpose, let us start again from 
$$\sum_{k=2}^{+\infty} F(\phi_\theta(k\epsilon))\geq \int_{2}^{+\infty} F(\phi_\theta(x\epsilon))dx\geq \frac{1}{\epsilon}\int_{2\epsilon_0}^{+\infty} F(\phi_\theta(y))dy~,$$
and observe that, by a direct analysis, there exists a constant $C>0$ such that
$$F(x)\geq C|\log(\sinh x)|$$
for $x\in(0,\arcsinh(1))$. Since $\phi_\theta(y)\in(0, \arcsinh(1))$ for $y\in(0,\arcsinh(1/\sin\theta))$, we can continue the inequality by:
\begin{align*}\sum_{k=2}^{+\infty} F(\phi_\theta(k\epsilon))&\geq  \frac{C}{\epsilon}\int_{2\epsilon_0}^{\arcsinh(\frac{1}{\sin\theta})} |\log(\sin\theta\sinh y)|dy \\
&\geq \frac{C}{\epsilon}\left(\int_{2\epsilon_0}^{\arcsinh(\frac{1}{\sin\theta})} |\log(\sin\theta)|dy-\int_{2\epsilon_0}^{\arcsinh(\frac{1}{\sin\theta})} \log(\sinh y)dy\right) \\
&\geq \frac{C}{\epsilon}\left(\int_{2\epsilon_0}^{|\log(\sin\theta)|} |\log(\sin\theta)|dy-\int_{1}^{\arcsinh(\frac{1}{\sin\theta})} ydy-C'\right) ~,
\end{align*}
where we have used that $\log(x)\leq \arcsinh(x)$, that $\log(\sinh y)\leq y$, and we put
$$C':=\int_{2\epsilon_0}^{1} |\log(\sinh y)|dy$$ Now, if we fix some small $\delta>0$, we have
$$\int_{2\epsilon_0}^{|\log(\sin\theta)|} |\log(\sin\theta)|dy=(|\log(\sin\theta)|-2\epsilon_0)|\log(\sin\theta)|\geq (1-\delta)|\log(\sin\theta)|^2$$
if $\theta$ is smaller than some $\theta_0=\theta_0(\epsilon_0)$. On the other hand, since 
\[
	\lim_{x \to +\infty}\frac{\log(x)}{\arcsinh(x)}=1~,
\]
one has $|\log(\sin\theta)|\geq (1-\delta)\arcsinh(1/\sin\theta)$, for $\theta\leq \theta_0$ (up to replacing again $\theta_0$) and therefore
$$\int_{1}^{\arcsinh(\frac{1}{\sin\theta})} ydy\leq \int_{0}^{\frac{|\log(\sin\theta)|}{1-\delta}} ydy=\frac{1}{2(1-\delta)^2}|\log(\sin\theta)|^2~.$$
In conclusion, we have 
$$\sum_{k=2}^{+\infty} F(\phi_\theta(k\epsilon))\geq\frac{C}{\epsilon}\left(\left((1-\delta)-\frac{1}{2(1-\delta)^2}\right)|\log(\sin\theta)|^2-C'\right)\geq\frac{K_2}{\epsilon}|\log(\sin\theta)|^2~,$$
for some constant $K_2$, provided $\theta\leq \theta_0$ and $\epsilon\leq\epsilon_0$. This concludes the proof.
\end{proof}

We are now ready to conclude the proof of the estimate of the Weil-Petersson gradient of the length function, in the case in which most of the length of the geodesic $\gamma$ lies in the thin part of $(S,h)$:

\begin{prop} \label{prop estimate thin}
There exists a constant $a$, depending only on the choice of a sufficiently small $\epsilon_0$ inducing a thin-thick decomposition of $S$, such that for every hyperbolic metric $h$ on $S$ and every simple closed curve $c$, if the $h$-geodesic representative $\gamma$ satisfies:
\[
	\mathrm{length}_{h}(\gamma\cap S^\thick_h)\leq \mathrm{length}_{h}(\gamma\cap S^\thin_h)~,
\]
then
\[
	\| \grad \ell_{c}(h)\|_{\weil}\geq \frac{a}{|\chi(S)|}\ell_{c}(h) \ .
\]
\end{prop}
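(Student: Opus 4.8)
The plan is to imitate the thick-part argument of Proposition~\ref{prop estimate thick}: localise the problem to a single thin tube and then push Riera's formula, in the form \eqref{eq summations}, through the collar estimate of Lemma~\ref{lemma estimate from riera}. First I would reduce: a genus-$g$ surface carries at most $3g-3\le\tfrac32|\chi(S)|$ simple closed geodesics of length $\le\epsilon_0$, so $S^\thin_h$ is a union of at most $\tfrac32|\chi(S)|$ standard collars $T_{\alpha,d(\epsilon)}$; since by hypothesis $\mathrm{length}_h(\gamma\cap S^\thick_h)\le\tfrac12\ell_c(h)$, a pigeonhole argument produces one collar $T_{\alpha,d(\epsilon)}$, with $\epsilon:=\ell_h(\alpha)\le\epsilon_0$, for which
\[
	L:=\mathrm{length}_h\big(\gamma\cap T_{\alpha,d(\epsilon)}\big)\ \ge\ \frac{1}{3|\chi(S)|}\,\ell_c(h)\ .
\]
It then suffices to prove a genus-independent bound $\|\grad\ell_c(h)\|_\weil^2\ge c_0 L^2$, valid once $\epsilon_0$ is small enough; one then takes $a=\sqrt{c_0}/3$, and combining with Proposition~\ref{prop estimate thick} this yields Theorem~\ref{thm:mainestimate}.

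Next I would analyse the geometry of the standard collar, isometric to $[-d(\epsilon),d(\epsilon)]\times(\R/\epsilon\Z)$ with metric $d\rho^2+\cosh^2\!\rho\,dt^2$. Each of the $n=\iota(\alpha,c)$ points of $\gamma\cap\alpha$ lies on an arc of $\gamma$ that crosses the collar and meets $\alpha$ at an angle $\theta_i$; the Clairaut relation $\cosh\rho\cos\theta(\rho)\equiv\mathrm{const}$ shows such an arc has length between $2d(\epsilon)$ and $C_1\big(d(\epsilon)+|\log\sin\theta_i|\big)$ for some universal $C_1$. Using the Collar Lemma to rule out (or bound) excursions of $\gamma$ into the collar that do not cross $\alpha$, this gives $L\le C_1\sum_{i=1}^{n}\big(d(\epsilon)+|\log\sin\theta_i|\big)$.

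Then I would feed this into Riera's formula. Fixing a lift $\tilde\alpha$ of $\alpha$ with primitive stabiliser $A$, the fundamental domain $\tilde\alpha_0$, and the lifts $\tilde\gamma_1,\dots,\tilde\gamma_n$ of $\gamma$ meeting $\tilde\alpha_0$, I would apply Lemma~\ref{lemma estimate from riera} to every pair $1\le i\le j\le n$ appearing in \eqref{eq summations}; the proof of that lemma works equally with the angle $\theta_j$ in place of $\theta_i$, because $\sinh d(\tilde\gamma_i,A^k\tilde\gamma_j)\le\min\{\sin\theta_i,\sin\theta_j\}\,\sinh\!\big((k+1)\epsilon\big)$, so one obtains
\[
	\|\grad\ell_c(h)\|_\weil^2\ \ge\ \frac{2K}{\pi}\sum_{1\le i\le j\le n}\frac{1}{\epsilon}\,\max\Big\{1,\ |\log\sin\theta_i|^2,\ |\log\sin\theta_j|^2\Big\}\ .
\]
Now I would split into cases. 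If $\sum_i|\log\sin\theta_i|\le n\,d(\epsilon)$, I discard the logarithmic terms, count the $\tfrac{n(n+1)}{2}$ pairs to get $\|\grad\ell_c\|_\weil^2\ge Kn^2/(\pi\epsilon)$, and combine with $L\le 2C_1 n\,d(\epsilon)$; if $\sum_i|\log\sin\theta_i|> n\,d(\epsilon)$, I keep only the logarithmic terms, apply Chebyshev's sum inequality to obtain $\|\grad\ell_c\|_\weil^2\ge \tfrac{K}{\pi\epsilon}\big(\sum_i|\log\sin\theta_i|\big)^2$, and combine with $L\le 2C_1\sum_i|\log\sin\theta_i|$. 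In either case the outcome has the form $\|\grad\ell_c(h)\|_\weil^2\ge \tfrac{c}{\epsilon\,d(\epsilon)^2}L^2$ or $\ge\tfrac{c}{\epsilon}L^2$ with $c$ universal; since $\epsilon$ and $\epsilon\,d(\epsilon)^2$ (with $d(\epsilon)$ as in \eqref{eq defi d epsilon}) are increasing functions of $\epsilon$ near $0$, they stay bounded by their values at $\epsilon_0$, which delivers $\|\grad\ell_c(h)\|_\weil^2\ge c_0 L^2$.

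I expect the main obstacle to be exactly this middle step: wringing out of Riera's double-coset sum a bound that is \emph{quadratic} in the intersection number $n=\iota(\alpha,c)$ --- this is what upgrades the trivial estimate $\|\grad\ell_c\|_\weil\gtrsim\sqrt{\ell_c}$ from the first term of \eqref{eq:formulaWolpert} to the sought-after $\|\grad\ell_c\|_\weil\gtrsim\ell_c/|\chi(S)|$ --- while keeping every constant independent of the genus and uniform as $\epsilon\to0$; this is what forces the two-case split and the monotonicity checks on $\epsilon\,d(\epsilon)^2$. A secondary nuisance is the collar trigonometry itself: the sharp upper bound $C_1(d(\epsilon)+|\log\sin\theta_i|)$ for a crossing arc, and the control of the arcs of $\gamma$ entering the collar without crossing $\alpha$.
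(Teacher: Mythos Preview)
Your proposal is correct and follows the same overall architecture as the paper: pigeonhole to a single tube $T_{\alpha,d(\epsilon)}$, bound the length $L$ of $\gamma\cap T$ by collar trigonometry, and extract a quadratic-in-$n$ lower bound from Riera's formula via Lemma~\ref{lemma estimate from riera} applied to all pairs in \eqref{eq summations}. The Chebyshev step and the monotonicity check on $\epsilon\,d(\epsilon)^2$ are both sound.

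The paper, however, avoids your two-case split by a single observation that you might find worth adopting. Instead of bounding $L$ by the sum $C_1\sum_i(d(\epsilon)+|\log\sin\theta_i|)$, the paper singles out the component $\eta$ with \emph{maximal} angle $\theta$ (equivalently minimal length) and uses disjointness to show every other crossing arc has length at most $\mathrm{length}_h(\eta)+\epsilon$; hence $L\le n\cdot C'(|\log\epsilon|+|\log\sin\theta|)$ with a single $\theta$. Since this $\theta$ also gives the weakest bound in Lemma~\ref{lemma estimate from riera}, every one of the $\sim n^2$ terms in \eqref{eq summations} is at least $\frac{K}{\epsilon}\max\{1,|\log\sin\theta|^2\}$, so
\[
\|\grad\ell_c\|_\weil^2\ \ge\ \frac{K'n^2}{\epsilon}\big(1+|\log\sin\theta|^2\big)\ \ge\ K'' n^2\big(|\log\epsilon|+|\log\sin\theta|\big)^2\ \ge\ c_0\,L^2,
\]
using only that $1/\epsilon$ dominates $|\log\epsilon|^2$ for small $\epsilon$. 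This replaces your Chebyshev argument and the case split by a single chain of inequalities. Your route buys nothing extra here, though it is perfectly valid; the paper's choice of the extremal angle is just the cleaner bookkeeping device.

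Your flagging of the non-crossing excursions (arcs of $\gamma$ entering the collar without meeting $\alpha$) is a fair point; the paper's argument implicitly treats every component of $\gamma\cap T$ as a crossing arc, and your proposal to bound such excursions via disjointness from the crossing arcs is the right instinct.
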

\begin{proof}
Choosing $\epsilon_0$ small enough, we have assured that there are at most $3g-3$ simple closed geodesics $\alpha_1,\ldots,\alpha_{3g-3}$ on $(S,h)$ of length at most $\epsilon_0$. Hence, the thin part of $(S,h)$ is composed by at most $3g-3$ tubes $T_{\alpha_i,d(\epsilon_i)}$, where $\epsilon_i$ is the length of $\alpha_i$ and the tubes were defined in Equation \eqref{eq defi tubes}. Let $\alpha=\alpha_{i_0}$ be one of such simple closed geodesics, of length $\epsilon$, such that
$$\mathrm{length}_h(\gamma\cap T_{\alpha,d(\epsilon)})\geq \frac{1}{3g-3}\mathrm{length}_h(\gamma\cap S^\thin_h)\geq \frac{1}{6g-6}\ell_c(h)~.$$
We will denote $T=T_{\alpha,d(\epsilon)}$ for convenience. Observe that, for every connected component $\eta$ of $\gamma\cap T$, such that the angle formed by $\eta$ and $\alpha$ is $\theta$, we have
\begin{equation} \label{eq sinh length eta}
\sinh\left(\frac{\mathrm{length}_h(\eta)}{2}\right)= \frac{\sinh d(\epsilon)}{\sin\theta}=\frac{1}{\sin\theta\sinh(\frac{\epsilon}{2})}~,
\end{equation}
by using the definition of $d(\epsilon)$ from \eqref{eq defi d epsilon}. 

Let us choose the connected component $\eta$ whose length is minimal --- which corresponds to choosing the connected component whose angle $\theta$ of intersection with $\alpha$ is maximal. Then it is easy to see that all the other connected components have length less than $\mathrm{length}_h(\eta)+\epsilon$, since they lift to geodesic segments in $\Hyp^2$ connecting two points in the two boundary components of $\widetilde T$. See Figure \ref{fig:lengths}.

\begin{figure}[htbp]
\centering
\includegraphics[height=6cm]{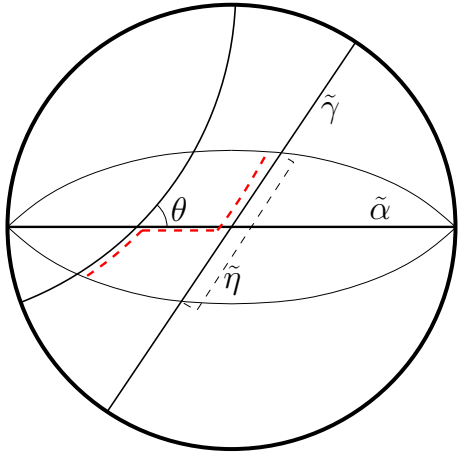}
\caption{In the universal cover, the tube $T$ is lifted to the set of points at bounded distance from $\tilde\alpha$. Using the fact that all components of $\gamma\cap T$ are disjoint, one sees that the length of every component of $\gamma\cap T$ is at most $\mathrm{length}_h(\eta)+\epsilon$, where $\eta$ is the shortest component.} \label{fig:lengths}
\end{figure}

Hence we have
\begin{equation} \label{eq ciccio}
\mathrm{length}_h(\gamma\cap T_{\alpha,d(\epsilon)})\leq \iota(\alpha,c)(\mathrm{length}_h(\eta)+\epsilon)~.
\end{equation}
On the other hand, from Equation \eqref{eq sinh length eta}, we have
$$\frac{\mathrm{length}_h(\eta)}{2}=\arcsinh\left(\frac{1}{\sin\theta\sinh(\frac{\epsilon}{2})}\right)$$
and therefore
\begin{equation} \label{eq nonna papera}
\mathrm{length}_h(\eta)+\epsilon\leq C\left|\log\left(\sin\theta\sinh\left(\frac{\epsilon}{2}\right)\right)\right|+\epsilon_0\leq C'(|\log\epsilon|+|\log(\sin\theta)|)
\end{equation}
for some suitable constants $C,C'$, if $\epsilon$ is at most some small constant $\epsilon_0$.

Now, using Equation \eqref{eq summations} and Lemma \ref{lemma estimate from riera}, we obtain
\begin{align*}\| \grad\ell_{c}(h)\|_{\weil}^{2}&\geq \frac{K}{\epsilon}\max\left\{{1},|\log(\sin\theta)|^2\right\}\iota(\alpha,c)^2 \\
&\geq \frac{K}{2\epsilon}(1+|\log(\sin\theta)|^2)\iota(\alpha,c)^2 \\
& \geq  K'\iota(\alpha,c)^2(|\log\epsilon|^2+|\log(\sin\theta)|^2) \\
&\geq \frac{K'}{2}\iota(\alpha,c)^2\left(|\log\epsilon|+|\log(\sin\theta)|\right)^2~,
\end{align*}
Therefore, comparing with \eqref{eq ciccio} and \eqref{eq nonna papera}, we have obtained 
$$\| \grad\ell_{c}(h)\|_{\weil}^{2}\geq K''(\mathrm{length}_h(\gamma\cap T_{\alpha,d(\epsilon)}))^2\geq \left(\frac{K''}{6g-6}\right)^2\ell_c(h)^2~,$$
which concludes the proof.
\end{proof}

\subsection{Conclusion of the proof and an application}

The proof of Theorem \ref{thm:mainestimate} is now straightforward:
\begin{proof}[Proof of Theorem \ref{thm:mainestimate}]
By Propositions \ref{prop estimate thick} and \ref{prop estimate thin}, we have (for a constant $a$ which replaces the constants involved there)
$$\| \grad \ell_{c}(h)\|_{\weil}\geq \frac{a}{|\chi(S)|}\max\{\mathrm{length}_{h}(\gamma\cap S^\thick_h), \mathrm{length}_{h}(\gamma\cap S^\thin_h)\} $$
and therefore 
$$\| \grad \ell_{c}(h)\|_{\weil}\geq \frac{a}{2|\chi(S)|}\ell_c(h)~,$$
as claimed.
\end{proof}

\begin{remark}
It appears possible, from the strategy used to prove Theorem \ref{thm:mainestimate}, to find the explicit value of the constant $a$ in the statement. This actually depends on some choices, for instance the constant $\epsilon_0$. We do not discuss such explicit value in this paper.
\end{remark}

We conclude by observing that, using Theorem \ref{thm:mainestimate}, one can give another proof of Theorem \ref{cor:confrontovolumeThurston}. For this purpose, first observe that, by the density of simple closed curves in the space of measured geodesic laminations, Thurston's asymmetric distance $\dth(h,h')=\inf_f\log L(f)$ can also be computed by the following characterization of $L(f)$ (compare with Equation \eqref{eq thuston lipschitz}):
\begin{equation} \label{eq thuston lipschitz 2}
	L(f)= \sup_{\mu\in\mathcal{ML}(S)}\frac{\ell_{\mu}({h'})}{\ell_{\mu}({h})}  \ .
\end{equation}
Now, fix two metrics $h$ and $h'$, and let $\lambda$ be the measured geodesic lamination such that $h'=E_l^\lambda(h)$. For any measured geodesic lamination $\mu$, by convexity of the length function along earthquake paths, we have:
\begin{equation} \label{eq convex gradient}
\ell_\mu(h')\geq \ell_\mu(h)+\left.\frac{d}{dt}\right|_{t=0}\ell_{\mu}(E_{l}^{t\lambda}(h))=\ell_\mu(h)+\langle\grad \ell_\mu,\dot E^\lambda_l(h)\rangle_\weil~,
\end{equation}
where $\dot E^\lambda_l$ defines a vector field on $\Teich(S)$.
Since it is known by a result of Wolpert (\cite{MR690844}) that the symplectic gradient of the length function $\ell_\lambda$ is the infinitesimal earthquake along $\lambda$, that is:
$$\left.\frac{d}{dt}\right|_{t=0}\ell_{\lambda}(r(t))=\omega_\weil(\dot E^\lambda_l(h),\dot r(t))=\langle J \dot E^\lambda_l(h),\dot r(t)\rangle_\weil~,$$
where $J$ is the almost-complex structure of $\Teich(S)$, from Equation \eqref{eq convex gradient} we get:
$$
\ell_\mu(h')\geq \ell_\mu(h)+\langle J\grad \ell_\mu(h),\grad \ell_\lambda(h)\rangle_\weil~.
$$
In particular, if we choose $\mu=\mu_0$, as the measured geodesic lamination such that $J\grad \ell_{\mu_0}(h)=\grad \ell_\lambda(h)$, one obtains:
\begin{align*}
L(f)\geq \frac{\ell_{\mu_0}(h')}{ \ell_{\mu_0}(h)}\geq& 1+\frac{ ||J\grad \ell_{\mu_0}(h)||_\weil ||\grad \ell_\lambda(h)||_\weil}{\ell_{\mu_0}(h)} \\
=& 1+\frac{ ||\grad \ell_{\mu_0}(h)||_\weil ||\grad \ell_\lambda(h)||_\weil}{\ell_{\mu_0}(h)}\geq 1+\frac{a^2}{|\chi(S)|^2}\ell_\lambda(h)
\end{align*}
by Theorem \ref{thm:mainestimate}, where we have also applied Equation \eqref{eq thuston lipschitz 2}. Using Theorem \ref{prop:comparison volume lams}, this concludes the alternative proof of the following:
\begin{theorem}
Let $M_{h,h'}$ be a maximal globally hyperbolic $\AdS^{3}$ manifold. Then
\[
	\Vol(\mathcal{C}(M_{h,h'})) \leq \frac{\pi^{2}}{2}|\chi(S)|+\frac{|\chi(S)|^2}{4a^2}\left(e^{\dth(h,h')}-1\right) \ .
\]
\end{theorem}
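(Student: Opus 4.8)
The plan is to reprove Theorem~\ref{cor:confrontovolumeThurston} (with the coefficient $|\chi(S)|^2$ in place of $|\chi(S)|$) by combining the laminar description of Thurston's asymmetric distance, convexity of length functions along earthquake paths, Wolpert's identification of the symplectic gradient of a length function with the associated infinitesimal earthquake, and the lower bound of Theorem~\ref{thm:mainestimate} applied twice. First I would record that, by density of weighted simple closed curves in $\mathcal{ML}(S)$ together with Thurston's formula~\eqref{eq thuston lipschitz}, the best Lipschitz constant can be computed as $L(f)=\sup_{\mu\in\mathcal{ML}(S)}\ell_\mu(h')/\ell_\mu(h)$, and hence
\[
	e^{\dth(h,h')}=\sup_{\mu\in\mathcal{ML}(S)}\frac{\ell_\mu(h')}{\ell_\mu(h)}~;
\]
in particular a lower bound for a single ratio $\ell_\mu(h')/\ell_\mu(h)$ suffices to bound $e^{\dth(h,h')}$ from below.

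Next, fix $h$ and let $\lambda\in\mathcal{ML}(S)$ be the left measured geodesic lamination with $h'=E_l^\lambda(h)$, provided by the Earthquake Theorem (Theorem~\ref{thm thurston}). For an arbitrary $\mu\in\mathcal{ML}(S)$, convexity of $t\mapsto\ell_\mu(E_l^{t\lambda}(h))$ gives
\[
	\ell_\mu(h')\geq\ell_\mu(h)+\left.\frac{d}{dt}\right|_{t=0}\ell_\mu(E_l^{t\lambda}(h))=\ell_\mu(h)+\langle\grad\ell_\mu(h),\dot E_l^\lambda(h)\rangle_\weil~,
\]
where $\dot E_l^\lambda$ is the infinitesimal earthquake vector field along $\lambda$. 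By Wolpert's theorem, $\dot E_l^\lambda$ is the $\weil$-symplectic gradient of $\ell_\lambda$, so the last inner product equals $\langle J\grad\ell_\mu(h),\grad\ell_\lambda(h)\rangle_\weil$, where $J$ is the complex structure of $\Teich(S)$. The decisive step is then to take $\mu=\mu_0$ to be the (unique) measured geodesic lamination whose infinitesimal earthquake at $h$ equals $\grad\ell_\lambda(h)$, i.e. $J\grad\ell_{\mu_0}(h)=\grad\ell_\lambda(h)$; for this $\mu_0$ the inner product above equals $\|\grad\ell_\lambda(h)\|_\weil^2=\|J\grad\ell_{\mu_0}(h)\|_\weil\,\|\grad\ell_\lambda(h)\|_\weil=\|\grad\ell_{\mu_0}(h)\|_\weil\,\|\grad\ell_\lambda(h)\|_\weil$, using that $J$ is a $\weil$-isometry. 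Therefore
\[
	e^{\dth(h,h')}\geq\frac{\ell_{\mu_0}(h')}{\ell_{\mu_0}(h)}\geq 1+\frac{\|\grad\ell_{\mu_0}(h)\|_\weil}{\ell_{\mu_0}(h)}\cdot\|\grad\ell_\lambda(h)\|_\weil~.
\]

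Finally I would apply Theorem~\ref{thm:mainestimate} twice: it gives $\|\grad\ell_{\mu_0}(h)\|_\weil/\ell_{\mu_0}(h)\geq a/|\chi(S)|$ and $\|\grad\ell_\lambda(h)\|_\weil\geq(a/|\chi(S)|)\ell_\lambda(h)$, whence $e^{\dth(h,h')}\geq 1+(a^2/|\chi(S)|^2)\ell_\lambda(h)$, that is $\ell_\lambda(h)\leq(|\chi(S)|^2/a^2)(e^{\dth(h,h')}-1)$. Substituting this into the upper bound $\Vol(\mathcal{C}(M_{h,h'}))\leq\frac14\ell_\lambda(h)+\frac{\pi^2}{2}|\chi(S)|$ of Theorem~\ref{prop:comparison volume lams} yields the claimed estimate. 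The only genuinely delicate point is the existence of the lamination $\mu_0$ realizing the given tangent vector $\grad\ell_\lambda(h)$ as an infinitesimal earthquake at $h$ — this is the infinitesimal form of the Earthquake Theorem, i.e. the surjectivity (indeed homeomorphism property) of the infinitesimal earthquake map $\mathcal{ML}(S)\to T_h\Teich(S)$; once this is granted, the rest of the argument is a chain of substitutions.
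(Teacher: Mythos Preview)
Your proposal is correct and follows essentially the same route as the paper's own proof: the same characterization of $e^{\dth(h,h')}$ via length ratios, convexity along the earthquake path, Wolpert's identification $\dot E_l^\lambda = -J\grad\ell_\lambda$, the choice of $\mu_0$ with $J\grad\ell_{\mu_0}(h)=\grad\ell_\lambda(h)$, Theorem~\ref{thm:mainestimate} applied twice, and finally Theorem~\ref{prop:comparison volume lams}. You are in fact slightly more explicit than the paper in flagging the one nontrivial input, namely the surjectivity of the infinitesimal earthquake map $\mathcal{ML}(S)\to T_h\Teich(S)$ needed to produce $\mu_0$.
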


\bibliographystyle{alpha}
\bibliographystyle{ieeetr}
\bibliography{bs-bibliography}

\end{document}